\theoremstyle{plain}
\newtheorem{theorem}{Theorem}[section]
\newaliascnt{corollary}{theorem}
\newtheorem{corollary}[corollary]{Corollary}
\newaliascnt{corollaryx}{theoremx}
\newtheorem{question}{Question}
\newaliascnt{lemma}{theorem}
\newtheorem{lemma}[lemma]{Lemma}
\newaliascnt{lemmax}{theoremx}
\newtheorem*{lemma*}{Lemma}
\newaliascnt{proposition}{theorem}
\newtheorem{proposition}[proposition]{Proposition}
\newaliascnt{propositionx}{theoremx}
\newaliascnt{example}{theorem}
\newtheorem{example}[example]{Example}
\newaliascnt{conjecture}{theorem}
\newtheorem{conjecture}[conjecture]{Conjecture}
\newaliascnt{observation}{theorem}
\newtheorem{observation}[observation]{Observation}
\theoremstyle{definition}
\newaliascnt{definition}{theorem}
\newtheorem{definition}[definition]{Definition}
\newtheorem*{definition*}{Definition}
\newaliascnt{construction}{theorem}
\newtheorem{construction}[construction]{Construction}
\theoremstyle{remark}
\newaliascnt{remark}{theorem}
\newtheorem{remark}[remark]{Remark}
\DeclareMathOperator{\SymOP}{Sym}
\DeclareMathOperator{\linspan}{span}
\DeclareMathOperator{\id}{id}
\DeclareMathOperator{\im}{im}
\DeclareMathOperator{\Kar}{Kar}
\DeclareMathOperator{\Mat}{Mat}
\DeclareMathOperator{\Hom}{Hom}
\DeclareMathOperator{\End}{End}
\DeclareMathOperator{\Pol}{\mathcal{P}}
\DeclareMathOperator{\Kh}{Kh}
\newcommand{\K}{\mathbb{K}}
\newcommand{\F}{\mathbb{F}}
\newcommand{\commR}{\mathbbm{k}}
\newcommand{\N}{\mathbb{N}}
\newcommand{\Z}{\mathbb{Z}}
\newcommand{\Q}{\mathbb{Q}}
\newcommand{\Vect}{\mathrm{Vect}}
\newcommand{\Mod}[1][\commR]{{#1}\mathrm{-Mod}}
\newcommand*{\TL}{\mathrm{TL}}
\newcommand{\Ch}{\mathrm{Ch}}
\newcommand{\cable}{\mathrm{cable}}
\newcommand{\dynkin}{\ensuremath{\leftrightarrow}}
\author{Karim Ritter von Merkl}
\DeclareMathOperator{\KhR}{KhR}
\DeclareMathOperator{\KhRb}{\mathbf{KhR}}
\DeclareMathOperator{\Cone}{Cone}
\newcommand{\cKh}[1]{\ensuremath{\Kh_{\SymOP^{#1}}}}
\newcommand{\invlabel}{\textit{Inv}}
\newcommand{\coinvlabel}{\textit{Coinv}}
\newcommand{\imlabel}{\textit{Im}}
\newcommand{\coimlabel}{\textit{Coim}}
\newcommand{\homlabel}{\ensuremath{H^\bullet_-}}
\newcommand{\cohomlabel}{\ensuremath{H^\bullet_+}}
\newcommand{\kerlabel}{\textit{Ker}}
\newcommand{\cokerlabel}{\textit{Coker}}
\newcommand{\karjw}[1][n]{\ensuremath{\pi_{#1}}}
\tikzset{anchorbase/.style={baseline={([yshift=-0.5ex]current bounding box.center)}},
	tinynodes/.style={font=\tiny, text height=0.25ex, text depth=0.05ex},
	smallnodes/.style={font=\scriptsize, text height=0.75ex, text depth=0.15ex},
	usual/.style={line width=1.0,color=black, line cap = round},
	thick/.style={line width=1.5,color=black},
	JW/.style={line width=0.5,color=black},
}
\newcommand{\tlcap}{\begin{tikzpicture}[anchorbase, scale=0.15]
		\draw[usual] (-1,0) to[out = 90, in = 180] (0,1) to[out = 0, in =90] (1,0);
\end{tikzpicture}}
\newcommand{\tlcapids}[2]{\begin{tikzpicture}[anchorbase, tinynodes, scale=0.15]
		\draw[usual] (-1.5,0) to (-1.5,1) node[left,xshift=2pt]{$#1$} to (-1.5,2);
		\draw[usual] (-1,0) to[out = 90, in = 180] (0,1) to[out = 0, in =90] (1,0);
		\draw[usual] (1.5,0) to (1.5,1) node[right,xshift=-2pt]{$#2$} to (1.5,2);
\end{tikzpicture}}
\newcommand{\tlcup}{\begin{tikzpicture}[anchorbase, scale=0.15]
		\draw[usual] (-1,0) to[out = 270, in = 180] (0,-1) to[out = 0, in = 270] (1,0);
\end{tikzpicture}}
\newcommand{\tlcupids}[2]{\begin{tikzpicture}[anchorbase, tinynodes, scale=0.15]
		\draw[usual] (-1.5,-2) to (-1.5,-1) node[left,xshift=2pt]{$#1$} to (-1.5,0);
		\draw[usual] (-1,0) to[out = 270, in = 180] (0,-1) to[out = 0, in = 270] (1,0);
		\draw[usual] (1.5,-2) to (1.5,-1) node[right,xshift=-2pt]{$#2$} to (1.5,0);
\end{tikzpicture}}
\newcommand{\tlcupcapids}[2]{\begin{tikzpicture}[anchorbase, tinynodes, scale=0.15]
		\draw[usual] (-1.5,-2.5) to (-1.5,-1) node[left,xshift=2pt]{$#1$} to (-1.5,0.5);
		\draw[usual] (-1,0.5) to[out = 270, in = 180] (0,-0.5) to[out = 0, in = 270] (1,0.5);
		\draw[usual] (-1,-2.5) to[out = 90, in = 180] (0,-1.5) to[out = 0, in =90] (1,-2.5);
		\draw[usual] (1.5,-2.5) to (1.5,-1) node[right,xshift=-2pt]{$#2$} to (1.5,0.5);
\end{tikzpicture}}
\newcommand{\tldensecupcapids}[2]{\begin{tikzpicture}[anchorbase, tinynodes, scale=0.15]
		\draw[usual] (-1.5,-1) to (-1.5,-0.3) node[left,xshift=2pt]{$#1$} to (-1.5,1);
		\draw[usual] (-1,-1) to[out = 90, in = 180] (0,-0.25) to [out = 0, in = 90]  (1,-1);
		\draw[usual]  (-1,1)  to[out = 270, in = 180] (0,0.25) to [out = 0, in = 270] (1,1);
	\draw[usual] (1.5,-1) to (1.5,-0.3) node[right,xshift=-2pt]{$#2$} to (1.5,1);
\end{tikzpicture}}
\newcommand{\tlcrossingids}[2]{\begin{tikzpicture}[anchorbase,tinynodes, scale=0.15]
		\draw[usual] (-1.5,-1) to (-1.5,-0.3) node[left,xshift=2pt]{$#1$} to (-1.5,1);
		\draw[usual] (-1,-1) to [out=90,in=270] (1,1);
		\draw[usual]  (1,-1)  to  [out=90,in=270] (-1,1);
		\draw[usual] (1.5,-1) to (1.5,-0.3) node[right,xshift=-2pt]{$#2$} to (1.5,1);
\end{tikzpicture}}
\newcommand{\positivecrossing}{\begin{tikzpicture}[anchorbase, scale=0.25]
		\draw[->] (1,-1) -- (-1,1);
		\draw[white, ultra thick, line width=3pt] (-1,-1) -- (1,1);
		\draw[->] (-1,-1) -- (1,1);
\end{tikzpicture}}
\newcommand{\negativecrossing}{\begin{tikzpicture}[anchorbase, scale=0.25]
		\draw[->] (-1,-1) -- (1,1);
		\draw[white, ultra thick, line width=3pt] (1,-1) -- (-1,1);
		\draw[->] (1,-1) -- (-1,1);
\end{tikzpicture}}
\newcommand{\unorientedcrossing}{\begin{tikzpicture}[anchorbase, scale=0.25]
	\draw (1,-1) -- (-1,1);
	\draw[white, ultra thick, line width=3pt] (-1,-1) -- (1,1);
	\draw(-1,-1) -- (1,1);
\end{tikzpicture}}
\newcommand{\verticalresolution}{\begin{tikzpicture}[anchorbase, scale=0.25]
		\draw (-1,-1) to[out=45, in=270] (-0.5,0) to[out=90,in =315] (-1,1);
		\draw (1,-1) to[out=135, in=270] (0.5,0) to[out=90,in =225]  (1,1);
\end{tikzpicture}}
\newcommand{\horizontalresolution}{\begin{tikzpicture}[anchorbase, scale=0.25]
		\draw (-1,-1) to[out=45, in=180] (0,-0.5) to[out=0,in =135] (1,-1);
		\draw (-1,1) to[out=315, in=180] (0,0.5) to[out=0,in =225] (1,1);
\end{tikzpicture}}
\begin{document}
	
\title{Computing colored Khovanov homology}
\begin{abstract}
We compare eight versions of finite-dimensional categorifications of the colored Jones polynomial and show that they yield isomorphic results over a field of characteristic zero. As an application, we verify a physics-motivated conjectural formula for colored superpolynomials based on Poincar\'e polynomials of the Khovanov homology of cables. We also obtain a conjectural closed formula for the Poincaré series of the skein lasagna module of $\overline{\mathbb{CP}^2}$. Accompanying this note is an online database of colored superpolynomials.  
\end{abstract}

\maketitle

\section{Introduction}

Khovanov homology is an invariant of links categorifying the Jones polynomial. To each oriented link it associates a bigraded abelian group, computed as the homology of a graded chain complex, such that the graded Euler characteristic recovers the Jones polynomial. Since its introduction by Khovanov \cite{khovanov2000categorification} at the turn of the century, this homology theory of links has been studied extensively. It was quickly discovered to be a stronger invariant than the Jones polynomial \cite{Bar_Natan_2002}: While it is not yet known if the Jones polynomial detects the unknot, Khovanov homology does~\cite{kronheimer2011khovanov}.

The Jones polynomial is naturally part of a whole family of link polynomials, the Reshetikhin--Turaev invariants \cite{Reshetikhin1991} associated to the quantum group $U_q(\mathfrak{sl}_2)$: Let $L$ be a framed, oriented link with $m$ ordered components denoted $K_1,\dots, K_m$. Given the data of a coloring of each component $K_i$ by a natural number $n_i \in \N_0$ (or, more accurately, the $(n_i+1)$-dimensional irreducible representation of $U_q(\mathfrak{sl}_2)$) and setting $\mathbf{n} = (n_1, \dots, n_m)$, one obtains the $\mathbf{n}$-colored Jones polynomial $J_\mathbf{n}(L)\in \Z[q,q^{-1}]$. The usual Jones polynomial arises from the choice $\mathbf{n}=(1,\dots, 1)$. For a framed knot $K$, we can express the $n$-colored Jones polynomial of $K$ in terms of the Jones polynomials of the parallel cables\footnote{The sense of parallelism is determined by the framing. For a blackboard framed diagram, the cable is represented by the $i$-fold parallel push-off.} $K^i$ of $K$ via
\begin{equation}
	\label{eq:coloredJonescableformula}
		J_n \left(K\right) = \sum_{k = 0}^{\lfloor \frac{n}{2} \rfloor} (-1)^k \binom{n-k}{k} J_1 \left( K^{n-2k} \right). 
\end{equation}
This type of relation generalizes in a straightforward multilinear way to colored links.

The colored Jones polynomials appear in several important constructions such as
the WRT-invariants for $3$-manifolds \cite{Reshetikhin1991} and the family as a
whole is the subject of deep and long-standing conjectures such as the AJ
conjecture~\cite{garoufalidis2004characteristicdeformationvarietiesknot}, the
slope conjecture~\cite{Garoufalidis2011-vq}, the volume
conjecture \cite{kashaev1997hyperbolic,
murakami2001colored}, and, more recently, the knots-quivers correspondence \cite{Kucharski_2017, Kucharski_2019} when specializing the colored HOMFLY-PT polynomials. Very roughly, all of these conjectures concern the growth-behavior of colored Jones polynomials at large color.

Shortly after the categorification of the Jones polynomial by Khovanov homology, categorifications of the colored Jones polynomials arose. In his paper~\cite{khovanov2005categorifications}, Khovanov already proposed five different (but---over a field of characteristic zero---conjecturally equivalent) ways of categorifying them. Note that besides the many variations of Khovanov's categorification of the colored Jones polynomial, there are also at least two categorification of the colored Jones polynomial via (bounded-above or bounded-below) categorified idempotents~\cite{cooper2012categorification,Frenkel2012-cb,Rozansky2014-ch}. While the former are always finite-dimensional, the latter are typically infinite-dimensional. In the following, we will focus on Khovanov's finite-dimensional invariant and the relationship between different constructions over a field of characteristic zero.

Categorifying colored link polynomials opens the study of maps between colored homologies that can be functorially associated to link cobordisms embedded in 4-dimensional space. These maps can carry additional information and naturally appear in topological constructions, such as the 4-manifold invariants based on Khovanov homology \cite{Morrison_2022} now known as skein lasagna modules, see \cite{manolescu2022skein,MR4589588,Hogancamp2025-ol}. 
Recently, Ren--Willis~\cite{ren2024khovanovhomologyexotic4manifolds} have shown that the skein lasagna modules detect exotic pairs of 4-manifolds based on purely algebraic data. 

Our work here is motivated by the desire to better understand skein lasagna modules and, more generally, the growth-behaviour of colored Khovanov homology in the color parameter. To this end, we have assembled a database \cite{coloredKh} of Poincaré polynomials of colored Khovanov homology, which is accessible at \url{https://colored-kh.math.uni-hamburg.de}. In the following we explain the method used to compute these superpolynomials.

\subsection{Colored superpolynomials from superpolynomials of cables}
Let $\F$ be a field. For any given finite-dimensional bigraded $\F$-vector space $V$, we denote by $\Pol(V)$ the 2-variable Poincaré polynomial or \emph{superpolynomial} of $V$:
	\[ \Pol(V)(t,q) := \sum_{i,j} \dim_\F\left(V^{i,j} \right) t^i q^j \in \N_0[t, t^{-1}, q, q^{-1}]. \]

If $\F$ is of characteristic zero, then there is an unambiguous notion of
finite-dimensional colored Khovanov homology $\cKh{n}(K)$ for framed oriented
knots $K$ in $B^3$, taking values in isomorphism classes of bigraded $\F$-vector
spaces or, equivalently, their Poincaré polynomials, see Proposition~\ref{prop:main}. 

\begin{theorem}
	\label{thm:introcableformula} Let $K$ be a framed oriented knot in $B^3$ and
$n\in \N_0$. Then the Poincaré polynomial of the finite-dimensional colored
Khovanov homology $\cKh{n}(K)$ can be computed from the Poincaré polynomials of
(ordinary) Khovanov homologies $\Kh(K^i)$ of the parallel cables $K^i$ of $K$
with coefficients in $\F$ via:
	\begin{equation}
		\notag
		\label{eq:intropoincare}
	\Pol \left(\cKh{n}(K)\right) = \sum_{k = 0}^{\lfloor \frac{n}{2} \rfloor} (-1)^k \binom{n-k}{k} \Pol \left( \Kh \left( K^{n-2k} \right) \right) \in \N_0[t, t^{-1}, q, q^{-1}] 
\end{equation}

\end{theorem}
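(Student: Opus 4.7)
The plan is to deduce the cable formula from a characteristic-zero direct-sum decomposition of the Khovanov homology of the $n$-cable into colored summands, followed by the inversion of a triangular linear system. Both steps categorify standard Clebsch--Gordan manipulations for $\mathfrak{sl}_2$-representations; once they are in place, the theorem follows formally.

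\textbf{Step 1: Decomposition of the cable.} Let $V$ denote the standard two-dimensional representation of $\mathfrak{sl}_2$ and write $V_m := \SymOP^m V$ for its $(m+1)$-dimensional irreducible. By Proposition~\ref{prop:main}, one may fix whichever of the eight equivalent models of $\cKh{n}(K)$ is most convenient; a natural choice is the model in which $\cKh{n}(K)$ is cut out of $\Kh(K^n)$ by the action of the Jones--Wenzl projector $p_n$ (equivalently, the symmetrizer in the Temperley--Lieb category, which exists in characteristic zero). Starting from the classical Clebsch--Gordan decomposition
$$V^{\otimes n} \;=\; \bigoplus_{\substack{0\le m\le n \\ n-m\,\in\, 2\mathbb{Z}}} M_{n,m}\otimes V_m, \qquad \dim M_{n,m}\;=\;\binom{n}{(n-m)/2}-\binom{n}{(n-m)/2-1},$$
the corresponding orthogonal idempotents lift to chain endomorphisms of the Khovanov complex of $K^n$ that are idempotent up to homotopy, and I expect them to yield a decomposition of bigraded $\F$-vector spaces
$$\Kh(K^n) \;\cong\; \bigoplus_{\substack{0\le m\le n \\ n-m\,\in\, 2\mathbb{Z}}} M_{n,m}\otimes \cKh{m}(K).$$

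\textbf{Step 2: Inversion.} Passing to Poincar\'e polynomials in Step 1 produces the triangular system
$$\Pol\bigl(\Kh(K^n)\bigr) \;=\; \sum_{m}\dim(M_{n,m})\cdot\Pol\bigl(\cKh{m}(K)\bigr),$$
with $1$'s along the diagonal since $V_n$ occurs with multiplicity one in $V^{\otimes n}$. Solving this system for $\Pol(\cKh{n}(K))$ is equivalent to expanding $[V_n]$ in terms of $\{[V]^{n-2k}\}_{k\ge 0}$ in the representation ring of $\mathfrak{sl}_2$, and the classical Chebyshev identity
$$[V_n] \;=\; \sum_{k=0}^{\lfloor n/2\rfloor}(-1)^k\binom{n-k}{k}\,[V]^{n-2k}$$
supplies exactly the coefficients appearing in the claimed cable formula. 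Substituting $[V]^{n-2k}\mapsto\Pol(\Kh(K^{n-2k}))$ then concludes the computation.

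\textbf{Main obstacle.} The hard part will be the categorical decomposition in Step~1: lifting the $\mathfrak{sl}_2$-decomposition of $V^{\otimes n}$ to a direct-summand decomposition of $\Kh(K^n)$ and canonically identifying each summand with (a multiplicity space tensored with) the chosen model of $\cKh{m}(K)$. This is exactly where the characteristic-zero hypothesis is needed: over $\F$ of characteristic zero the Jones--Wenzl projectors and the ambient symmetrizer idempotents in the $n$-strand Temperley--Lieb algebra exist, so they act as honest (up to homotopy) splittings on the Khovanov complex, whereas in positive characteristic they can fail to exist. Once Proposition~\ref{prop:main} has pinned down the model of $\cKh{n}$, this decomposition is the only genuinely categorical input; Step~2 is then a formal matrix inversion.
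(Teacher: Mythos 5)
Your Step 2 is formally fine: granting the decomposition of Step 1 for all colors of the same parity, the system is triangular with $1$'s on the diagonal, and inverting it via the Chebyshev expansion of $[V_n]$ in powers of $[V]$ does yield the coefficients $(-1)^k\binom{n-k}{k}$; this is exactly the inverse relation recorded in \autoref{cor:inverse}. The genuine gap is Step 1, which you leave as ``I expect them to yield a decomposition.'' The isomorphism $\Kh(K^n)\cong\bigoplus_m M_{n,m}\otimes\cKh{m}(K)$ is not an available input: it is the categorified, vector-space-level form of the inverse cable relation, hence at least as strong as the statement you are proving, and your suggested mechanism does not produce it. Two distinct ingredients are missing. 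First, the splitting itself does not come from lifting symmetrizer idempotents to chain endomorphisms ``idempotent up to homotopy''; what is actually used is that $\Kh(K^n)$ carries an honest action of the Temperley--Lieb algebra $\TL_n(2)$ on homology via annular cobordism maps, and that this algebra is semisimple in characteristic zero, so the homology splits into isotypic pieces with multiplicity spaces. Second, and more seriously, you must identify the multiplicity space of the through-degree-$m$ simple with $\cKh{m}(K)=\im\Kh_K(p_m)\subseteq\Kh(K^m)$, i.e.\ with colored homology of a \emph{smaller} cable. Endomorphisms of $K^n$ alone cannot see this; one needs the cap/cup cobordism maps between different cables (the functor $\Kh_K$ on all of $\TL$, not just on $\TL_n$), for instance via the primitive idempotent $e_m=2^{-(n-m)/2}\,c_m^{*}\,p_m\,c_m$ with $c_m\colon n\to m$ a cap diagram, using that $c_m c_m^{*}=2^{(n-m)/2}\id_m$ is invertible in characteristic zero, so that $\Kh_K(c_m)$ restricts to an isomorphism $e_m\Kh(K^n)\cong p_m\Kh(K^m)$. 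Without this identification, semisimplicity only gives abstract multiplicity spaces depending on both $n$ and $m$, and the inversion in Step 2 returns nothing about $\Pol(\Kh(K^{n-2k}))$.

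It is also worth noting that the paper's proof avoids any decomposition of $\Kh(K^n)$ altogether: it shows (\autoref{lem:hte}) that Khovanov's complex $C_n$ over $\Mat(\Kar(\TL))$, whose degree-$k$ term is $(n-2k)^{\oplus\binom{n-k}{k}}$, is homotopy equivalent to the one-term complex $p_n$; applying the additive functor $\Kh_K$ and using homotopy invariance of the bigraded Euler characteristic immediately gives the alternating-sum formula, with the binomial coefficients arising from counting compositions rather than from a Chebyshev inversion. If you want to keep your route, the semisimplicity-plus-$e_m$ argument sketched above is what would complete Step 1, but that identification across cable sizes is the real content of the statement, not a formality.
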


For simplicity, we only state the theorem restricted to knots here. The general version in \autoref{prop:definitionsEquivalent} covers the case of colored links. Interestingly, the relation \eqref{eq:coloredJonescableformula} between the colored
Jones polynomials of a knot and the Jones polynomials of its cables carries over verbatim
to Poincaré polynomials of colored Khovanov homology, which is also true for links if one is careful with orientations. This relationship, in a conjectural form, was already used in the physics literature, see e.g. \cite{gukov2020bpsspectra3manifoldinvariants}.

\begin{corollary}
	\label{cor:inverse}
	In the context of \autoref{thm:introcableformula}, we have the inverse relation
	\begin{equation*}
		\Pol \left( \Kh \left( K^{n} \right) \right) = \sum_{k = 0}^{\lfloor \frac{n}{2} \rfloor} T_{n,n-2k} \Pol \left(\cKh{n-2k}(K)\right)
	\end{equation*}
	where $T_{n,m}$ are the numbers of the Catalan triangle \cite[A053121]{oeis} which count truncated Dyck paths.
\end{corollary}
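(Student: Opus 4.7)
The plan is to recognise \autoref{thm:introcableformula} as an instance of a classical change-of-basis formula and to invert the underlying linear system formally. The coefficients $(-1)^{k}\binom{n-k}{k}$ appearing in that formula are precisely the coefficients expressing the Chebyshev polynomial of the second kind $U_{n}(x)$ in the monomial basis:
\[
U_{n}(x)=\sum_{k=0}^{\lfloor n/2 \rfloor}(-1)^{k}\binom{n-k}{k}\,x^{n-2k}.
\]
Under the formal substitution $x^{m}\mapsto \Pol(\Kh(K^{m}))$ and $U_{m}(x)\mapsto \Pol(\cKh{m}(K))$, this classical identity reproduces \autoref{thm:introcableformula}. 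The task therefore reduces to inverting the Chebyshev change-of-basis, and then applying the inverse identity under the same substitution.

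I would establish the dual identity
\[
x^{n}=\sum_{k=0}^{\lfloor n/2 \rfloor}T_{n,\,n-2k}\,U_{n-2k}(x),
\]
with $T_{n,m}$ the Catalan triangle entries \cite[A053121]{oeis}, and then apply the substitution above to obtain \autoref{cor:inverse}. The cleanest route to the dual identity is by induction on $n$: the three-term recurrence $U_{n+1}(x)=x\,U_{n}(x)-U_{n-1}(x)$ and the Catalan-triangle recurrence $T_{n+1,m}=T_{n,m-1}+T_{n,m+1}$ (with the convention $T_{n,-1}=0$) dovetail perfectly to propagate the identity from $n$ to $n+1$. Alternatively, one may invoke the Clebsch--Gordan decomposition $V_{1}^{\otimes n}\cong \bigoplus_{m}T_{n,m}\,V_{m}$ of tensor powers of the fundamental $U_{q}(\mathfrak{sl}_{2})$-representation and match characters, which gives the same identity.

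The argument is essentially linear algebra: the coefficient matrix $(-1)^{k}\binom{n-k}{k}$, indexed by pairs $(n,m)$ with $m=n-2k$, is block-diagonal by parity and unitriangular, hence invertible over $\Z$. The principal step is therefore identifying the inverse entries with the Catalan triangle, which is a classical combinatorial fact (or, equivalently, a character-theoretic shadow of $U_{q}(\mathfrak{sl}_{2})$-representation theory). Everything else is formal bookkeeping, and there is no genuine obstacle beyond the careful tracking of parities needed to ensure that the two blocks of the system (for $n$ even and $n$ odd) decouple cleanly.
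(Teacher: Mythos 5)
Your proposal is correct and follows essentially the same route as the paper, which justifies the corollary by noting that the coefficients in \autoref{thm:introcableformula} are those of the Chebyshev polynomials of the second kind in the monomial basis and inverting this (unitriangular, parity-preserving) change of basis, with the inverse entries given by the Catalan triangle \cite[A053121]{oeis}. Your additional verification of the classical identity $x^{n}=\sum_{k}T_{n,n-2k}U_{n-2k}(x)$ via the three-term recurrence (or, equivalently, the Clebsch--Gordan decomposition of $V_{1}^{\otimes n}$) simply fills in the standard combinatorial fact that the paper takes for granted.
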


\begin{remark}
		Unlike the colored Jones polynomials and the projector colored homology \cite{cooper2012categorification, Rozansky2014-ch} that depend on the framing only up to shift, $\cKh{n}(K)$ depends on the framing in a more subtle way. Some examples of this are discussed in \autoref{sec:examples} and even more can be seen in the database \cite{coloredKh}.
\end{remark}

\begin{remark}
	Hogancamp \cite{Hogancamp2018-xq} defined a version of projector colored homology that replaces the unbounded complexes introduced by Cooper--Krushkal \cite{cooper2012categorification} with bounded ones. Since these do not categorify the Jones--Wenzl projectors, but multiples of them, the resulting link invariant categorfies a multiple of the colored Jones polynomial. It would be interesting to understand the relation between these variations.
\end{remark}

\subsection{Versions of colored Khovanov homology}

During the proof of \autoref{thm:introcableformula}, we will naturally relate some of the various notions of colored Khovanov homology mentioned above to each other. We therefore use the opportunity to fill in some details in the proof (of the folklore result) that these variations are indeed isomorphic over a field of characteristic zero as outlined in \cite[Proposition 6.5]{Hogancamp2025-ol} for three of the following variations. Detailed construction for all of these versions can be found in \autoref{def:coloredHomology} and the preceding explanations, but to give an overview already, we sketch those constructions. 

\begin{proposition}
	\label{prop:main} Let $K$ be a framed knot, $n\in \N$, and $\F$ a field of
characteristic zero. Consider the Khovanov homologies $\Kh\left(
K^{i}\right)$ of the alternatingly oriented parallel $i$-cables of $K$ with coefficients in $\F$ for
$0\leq i \leq n$. Then the following incarnations of $n$-colored Khovanov
homology for $K$ with coefficients in $\F$ are isomorphic:
\begin{center}
	\begin{tabular}{ r l}
		(\invlabel) & The invariants for the $S_n$-action on $\Kh\left( K^{n}\right)$ induced by braiding cobordisms \cite{Grigsby_2017}.  \\ 
		(\coinvlabel) &  The coinvariants for the same $S_n$-action on $\Kh\left( K^{n}\right)$.  \\  
		(\cohomlabel) & The cohomology of Khovanov's cochain complex from
		\cite{khovanov2005categorifications} built from $\Kh\left( K^{i}\right)$, which is      \\
		&supported in non-negative degrees.\\
		(\homlabel) &   The cohomology of an analogous cochain complex concentrated in non-positive degrees. \\
		(\kerlabel) & The intersection of kernels of annulus cobordism maps $\Kh\left( K^{n}\right)\to \Kh\left( K^{n-2}\right)$.  \\
		(\cokerlabel) & The joint cokernel of all annulus cobordism maps $\Kh\left( K^{n-2}\right)\to \Kh\left( K^{n}\right)$.  \\
		(\imlabel) & The image of the idempotent endomorphism of $\Kh\left( K^{n}\right)$ corresponding to the $n$-th Jones--Wenzl \\
		& projector $p_n$ under the Temperley--Lieb action given by annulus cobordism maps \cite{Grigsby_2017}.  \\
		(\coimlabel) & The quotient of $\Kh\left( K^{n}\right)$ by the image of the complementary idempotent to $p_n$.   
	\end{tabular}
	\end{center}
 \end{proposition}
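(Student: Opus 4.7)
The plan is to identify all eight versions of colored Khovanov homology with the image of a single idempotent on $\Kh(K^n)$ induced by the Jones--Wenzl projector $p_n$, exploiting two facts that hold only in characteristic zero: the algebras $\F[S_n]$ and $\F \otimes \TL_n$ are semisimple, and $p_n$ exists as a well-defined element of $\F \otimes \TL_n$.

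To relate the first six versions, I would proceed as follows. The symmetric group $S_n$ acts on $\Kh(K^n)$ by braiding cobordisms~\cite{Grigsby_2017}, while $\TL_n$ acts by annulus cobordisms; in characteristic zero, the $S_n$-action factors through the $\F \otimes \TL_n$-action in such a way that the averaging idempotent $e = \frac{1}{n!}\sum_{\sigma \in S_n} \sigma_*$ induces the same endomorphism of $\Kh(K^n)$ as $p_n$. This yields $\invlabel = \imlabel$ as subspaces. Since $e$ and $p_n$ are idempotents in characteristic zero, they split $\Kh(K^n)$ into image plus kernel, so $\invlabel \cong \coinvlabel$ and $\imlabel \cong \coimlabel$ are automatic. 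For the kernel-type description, I would use the defining property that $p_n$ is the unique idempotent in $\F \otimes \TL_n$ annihilated by every cap-cup generator $U_i$: since the annulus cobordism maps $\Kh(K^n) \to \Kh(K^{n-2})$ are the categorifications of caps and each $U_i$ factors as a cup after a cap, their joint kernel coincides with $\mathrm{Im}(p_n)$, so $\kerlabel = \imlabel$. The dual statement $\cokerlabel = \coimlabel$ follows from the analogous characterization of $1 - p_n$, whose image equals the joint image of the cobordism maps $\Kh(K^{n-2}) \to \Kh(K^n)$.

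For the two remaining versions $\cohomlabel$ and $\homlabel$, I would use that Khovanov's cochain complexes in \cite{khovanov2005categorifications} are built to categorify the closed-form expressions for $p_n$ arising from the Jones--Wenzl recursion. Acting with the $\F \otimes \TL_n$-idempotent $p_n$ on each chain group splits the entire complex into a direct sum of two subcomplexes: the part where $p_n$ acts as the identity is isomorphic to the constant complex $\imlabel$ concentrated in degree zero (since all higher terms involve cables $\Kh(K^{n-2k})$ with $k > 0$, on which $p_n$ vanishes after inclusion via cups), while the part where $p_n$ acts as zero is acyclic (it categorifies the contribution of $1-p_n$, which is a null-homotopic expression on the $p_n$-kernel by the very recursion that defined the complex). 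Taking cohomology gives $\cohomlabel \cong \homlabel \cong \imlabel$.

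I expect the last step to be the main obstacle. While the first six isomorphisms follow cleanly from the semisimplicity of $\F \otimes \TL_n$ and the representation-theoretic characterization of $p_n$, handling the cochain complexes requires unpacking Khovanov's differentials and verifying acyclicity of the $(1-p_n)$-part at the chain level, as opposed to merely at the Grothendieck group level where the formulas agree by construction. This is where a careful reading of the construction in \cite{khovanov2005categorifications} is required, and where the characteristic-zero hypothesis is most visibly used: without it, both the chain-level splitting via $p_n$ and the very existence of the relevant idempotents break down.
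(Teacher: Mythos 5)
Your handling of the six ``idempotent-theoretic'' versions is essentially sound and close in spirit to the paper's own arguments: averaging gives (\invlabel)\,$\cong$\,(\coinvlabel), the identification of the symmetrizer with $p_n$ (crossing $=$ identity minus cup--cap) gives (\invlabel)\,$=$\,(\imlabel), and idempotent splitting gives (\imlabel)\,$\cong$\,(\coimlabel). One caveat: uniqueness of $p_n$ among idempotents killed by the $U_i$ does not by itself yield (\kerlabel)\,$\subseteq$\,(\imlabel). What you need is that $1-p_n$ lies in the \emph{left} ideal generated by the cap--cup elements --- every non-identity Temperley--Lieb diagram factors through a cap at the bottom --- so that a class annihilated by all cap maps is annihilated by $1-p_n$ and hence lies in the image of $p_n$; dually for (\cokerlabel). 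This is fillable, but it is a step, not a consequence of uniqueness.

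The genuine gap is your treatment of (\cohomlabel) and (\homlabel). The chain groups of Khovanov's complex in homological degree $k>0$ are direct sums of $\Kh\!\left(K^{n-2k}\right)$; the idempotent $p_n$ is an endomorphism of the object $n$ (equivalently of $\Kh\!\left(K^{n}\right)$) and does not act on these groups, so ``acting with $p_n$ on each chain group'' is not defined as stated, and the asserted splitting into an identity part and an acyclic $(1-p_n)$-part is precisely the statement requiring proof --- you yourself flag the acyclicity as unverified. The paper resolves this at the level of the Temperley--Lieb category rather than after applying $\Kh_K$: \autoref{lem:hte} shows that the complex $C_n$ of \autoref{cons:complex} is homotopy equivalent, over $\Mat(\Kar(\TL))$, to the one-term complex $p_n$, by induction on $n$ using the cone description $C_n=\Cone(C_{n-1}\otimes 1\to C_{n-2})$, the decomposition $p_{n-1}\otimes \id_1\cong p_n\oplus p_{n-2}$, and Gaussian elimination; since $p_n$ and $C_n$ are self-dual up to homotopy, applying the additive functor $\Kh_K$ shows that both $\Kh_K(C_n)$ and $\Kh_K(C_n^*)$ have cohomology $\Kh_K(p_n)$ concentrated in degree zero (\autoref{prop:dualities}), which is what identifies (\cohomlabel), (\homlabel), (\kerlabel), (\cokerlabel) with (\imlabel). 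Without this lemma (or an equivalent argument, such as the Chebyshev-system proof cited in the paper), the isomorphisms (\cohomlabel)\,$\cong$\,(\homlabel)\,$\cong$\,(\imlabel) are not established, so your proposal as written does not prove the proposition.
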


 \begin{remark}
	With the exceptions of (\imlabel) and (\coimlabel), all of the above can be defined over $\Z$ or, more generally, any commutative ring $\commR$. The versions (\imlabel) and (\coimlabel) of $n$-colored Khovanov homology can be defined over a commutative ring $\commR$ when the $n$-th Jones--Wenzl projector exists in $\TL^\commR(2)$. Therefore, we restrict to a field of characteristic 0 when we work with these versions. Over a field of characteristic zero, isomorphisms (\invlabel) $\cong$ (\coinvlabel) and (\imlabel) $\cong$ (\coimlabel) are to be expected for purely algebraic reasons, as we will record in \autoref{prop:dualities}. In contrast, these are (in general) not isomorphic over a commutative ring $\commR$ or field of positive characteristic.
 \end{remark}
 
Finally, we will extract some observations from the database \cite{coloredKh} that can be related to conjectures in the literature, derive a conjectural closed formula for the Poincaré series of the skein lasagna module of $\overline{\mathbb{CP}^2}$ and used to define a new family of knot invariants.

\subsection*{Acknowledgments}

The author would like to thank Jesse Cohen, Leon Goertz, Eugene Gorsky, Hannes Knötzele, Lukas Lewark, Qiuyu Ren and Claudius Zibrowius for helpful conversations and comments on earlier versions of this note and his advisor Paul Wedrich for support and encouragement along the way.
I would also like to thank an anonymous referee for many helpful comments and suggestions to improve this work.
The author acknowledges support by the Deutsche Forschungsgemeinschaft (DFG, German Research Foundation) under Germany's Excellence Strategy - EXC 2121 ``Quantum Universe'' - 390833306 and is partially funded by the Collaborative Research Center - SFB 1624 ``Higher structures, moduli spaces and integrability'' - 506632645.

\section{Preliminaries}
\label{sec:prelim}

\subsection{The Temperley--Lieb category}

 Let $\commR$ be a commutative ring. The Temperley--Lieb category $\TL(d) := \TL^\commR(d)$ (see e.g.\ \cite{Kauffman-Lins94}) is the $\commR$-linear category with objects $n \in \N_0$ and $\Hom(n,m)$ freely generated over $\commR$ by crossingless matchings on $n+m$ points. When $n+m$ is even, these can be interpreted diagrammatically as embeddings of $\frac{n+m}{2}$ intervals in a rectangle with $n$ (equidistant, centered) endpoints at the bottom and $m$ at the top up to isotopy relative to the boundary. If $m+n$ is odd, then $\Hom(n,m) = 0$. Composition is modeled on the $\commR$-bilinear extension of stacking diagrams on top of each other and removing every circle in a diagram at the cost of introducing a factor $d$. For instance:
 \begin{gather*}
 	\begin{tikzpicture}[anchorbase,scale=0.5,tinynodes]
 		\draw[usual, out = 90, in = 270] (1,-1) to (-1,1);
 		\draw[usual] (-1,-1)  to[out = 90, in = 180] (-0.5,-0.5) to [out = 0, in = 90] (0,-1);
 		\draw[usual]  (0,1) to[out = 270, in = 180] (0.5,0.5) to [out = 0, in = 270] (1,1);
 	\end{tikzpicture}
 	\circ 
 	\begin{tikzpicture}[anchorbase,scale=0.5,tinynodes]
 		\draw[usual,out = 90, in = 270]  (0,-1) to (1,1);
 		\draw[usual] (-1,1) to[out = 270, in = 180] (-0.5,0.5) to [out = 0, in = 270] (0,1);
 	\end{tikzpicture}
 	=
 	\begin{tikzpicture}[anchorbase,scale=0.5,tinynodes]
 		\draw[usual](0,-3) to[out = 90, in = 270] (1,-1) to[out = 90, in = 270]  (-1,1);
 		\draw[usual] (-1,-1) to[out = 90, in = 180] (-0.5,-0.5) to [out = 0, in = 90] (0,-1)  to [out = 270, in = 0]  (-0.5,-1.5) to[out = 180, in = 270] (-1,-1);
 		\draw[usual] (0,1) to[out = 270, in = 180] (0.5,0.5) to [out = 0, in = 270] (1,1);
 	\end{tikzpicture}
 	=
 	d \cdot
 	\begin{tikzpicture}[anchorbase,scale=0.5,tinynodes]
 		\draw[usual, out = 90, in = 270] (0,-1) to (-1,1);
 		\draw[usual]  (0,1) to[out = 270, in = 180] (0.5,0.5) to [out = 0, in = 270] (1,1);
 	\end{tikzpicture}
 \end{gather*}
 Horizontal juxtaposition of diagrams turns the Temperley--Lieb category into a (strict) monoidal category with $0 \in \N_0$ being the monoidal unit. Every object $n \in \TL(d)$ is self-dual with the (co)evaluations being $n$ nested caps/cups. Therefore, the duals of morphisms are given by a 180-degree rotation of diagrams. Reflecting diagrams in a vertical line in the plane yields an (opmonoidal) automorphism of the Temperley--Lieb category denoted by $(-)^\dynkin$.
 
 The $n$-th Temperley--Lieb algebra $\TL_n(d)$ is the endomorphism algebra $\End(n)$ in $\TL(d)$.
 We may look for a family of special idempotents in these algebras whose existence depends on the units of the ring $\commR$.
 \begin{definition}
 	\label{def:JW}
 	An element $p_n \in \TL_n(d)$ satisfying the three conditions
 	\begin{enumerate}[(i)]
 		\item $p_n^2 = p_n$.
 		\item $p_n$ annihilates cups/caps, i.e.\ when denoting $p_n$ by $\begin{tikzpicture}[anchorbase, scale=0.2]
 			\draw[JW,fill=white] (-2.5,-2.5) rectangle (2.5,-4.5) node[pos=0.5] {$n$};
 		\end{tikzpicture}$, we have
 		\begin{gather*}
 			\begin{tikzpicture}[scale=0.25, anchorbase]
 				\draw[usual] (-1.5,-2.5) to (-1.5,-1) node[left,xshift=2pt]{$i$} to (-1.5,0.5);
 				\draw[usual] (-1,-2.5) to[out = 90, in = 180] (0,-1.5) to[out = 0, in =90] (1,-2.5);
 				\draw[usual] (1.5,-2.5) to (1.5,-1) node[right,xshift=-2pt]{} to (1.5,0.5);
 				\draw[JW,fill=white] (-2.5,-2.5) rectangle (2.5,-4.5) node[pos=0.5] {$n$};
 			\end{tikzpicture}
 			= 0 =
 			\begin{tikzpicture}[scale=0.25, anchorbase]
 				\draw[usual] (-1.5,-7.5) to (-1.5,-6) node[left,xshift=2pt]{$i$} to (-1.5,-4.5);
 				\draw[usual] (-1,-4.5) to[out = 270, in = 180] (0,-5.5) to[out = 0, in =270] (1,-4.5);
 				\draw[usual] (1.5,-7.5) to (1.5,-6) node[right,xshift=-2pt]{} to (1.5,-4.5);
 				\draw[JW,fill=white] (-2.5,-2.5) rectangle (2.5,-4.5) node[pos=0.5] {$n$};
 			\end{tikzpicture}.
 		\end{gather*}
 		\item The coefficient of the identity in $p_n$ (with respect to the crossingless matching basis) is one.
 	\end{enumerate} 
 	is called \emph{$n$-th Jones--Wenzl projector}.
 \end{definition}
 If such an element exists, it is in fact uniquely characterized by the three properties above and non-zero (see e.g.\ \cite{Kauffman-Lins94}).
If we work over the ring $\Z[q^{-1},q]]$ and use $d = q + q^{-1}$, we can express them recursively via
\begin{gather*}\label{eq:jw-recursion}
	p_0
	=\emptyset\, ,
	\quad
	p_1 =
	\begin{tikzpicture}[anchorbase,scale=.33]
		\draw[usual] (0,0) to (0,2);
	\end{tikzpicture}\,
	,\quad
	p_n
	=
	\begin{tikzpicture}[anchorbase,scale=.33]
		\draw[JW] (-1.5,0) rectangle (1.5,2);
		\node at (0,.9) {$n$};
	\end{tikzpicture}
	:=
	\begin{tikzpicture}[anchorbase,scale=.33]
		\draw[JW] (-1.5,0) rectangle (1.5,2);
		\node at (0,.9) {$n-1$};
		\draw[usual] (2,0) to (2,2);
	\end{tikzpicture}
	-
	\frac{[n-1]}{[n]}\cdot
	\begin{tikzpicture}[anchorbase,scale=.33]
		\draw[JW] (-2.5,1) rectangle (2.5,3);
		\node at (0,1.9) {$n-1$};
		\draw[JW] (-2.5,-1) rectangle (0.5,1);
		\node at (-1,-.1) {$n-2$};
		\draw[usual] (1.5,1) to[out=270, in=180] (2.25,.25) to[out=0, in=270] (3,1) to (3,3);
		\draw[usual] (1.5,-1) to[out=90, in=180] (2.25,-.25) to[out=0, in=90] (3,-1) to (3,-3);
		\draw[JW] (-2.5,-1) rectangle (2.5,-3);
		\node at (0,-2.1) {$n-1$};
	\end{tikzpicture}
	\quad
	\text{ if }n \geq 2,
\end{gather*}

where $[n] = \frac{q^{n} - q^{-n}}{q - q^{-1}}$ denotes the $n$-th quantum integer. In this case, all the necessary inverses exist such that this recursion is well-defined. The same holds true when we work over a field of characteristic zero and specify $q = 1$.

Usually, we will not work with the Temperley--Lieb category $\TL := \TL(2)$ directly, but rather with the additive completion of its Karoubi envelope $\Mat(\Kar(\TL))$ and with coefficients in a field of characteristic zero. This category consists of finite formal direct sums of objects in the idempotent complete category $\Kar(\TL)$ with matrices of morphisms between them. Composition is modeled on matrix multiplication. 
See e.g.\ \cite{Hogancamp2025-ol} for details on the Karoubi envelope and (a graded versions of) the additive closure.

\section{Variations of colored Khovanov homology}
\label{sec:variation}

\subsection{Conventions}
\label{subsec:convention}

There are many different grading conventions used in Khovanov homology that can make it difficult to compare, relate or combine computations obtained with various methods. Sullivan--Zhang \cite[Section 3.2]{sullivan2024kirbybeltscategorifiedprojectors} give a great overview over various conventions for Khovanov homology and where they are used. We will quickly summarize those below, but highly recommend the original overview. We additionally include (a non-complete) list of programs known to us computing Khovanov homology using the respective conventions. In order to make the database \cite{coloredKh} as agnostic about them as possible, any of these can be chosen as output format of the data and a general converter for Poincaré polynomials using these conventions is provided.

Note that all of the following conventions are cohomological in the sense that differentials raise the $t$-degree. The degree zero part of complexes is underlined.
\subsection*{Khovanov's original formulation}
	Khovanov \cite{khovanov2000categorification} originally introduced the categorification of the Jones polynomial based on the oriented skein relations:
	\[
		\positivecrossing = q \ \underline{\verticalresolution} \to q^2 \ \horizontalresolution
		\quad \text{and} \quad
		\negativecrossing = q^{-2} \ \horizontalresolution \to q^{-1} \ \underline{\verticalresolution}
		\]
	This convention is equivalent to the one used by Bar-Natan \cite{Bar_Natan_2002, Bar_Natan_2005} based on the unoriented skein relation:
	\[ \unorientedcrossing = \underline{\verticalresolution} \to q \ \horizontalresolution  \]
	together with a global shift on the resulting complex that reproduces the grading of the oriented skein relations.
	We denote this version of Khovanov homology by $\Kh$. It is invariant under change of framing or, equivalently, Reidemeister I moves, up to bigrading-preserving isomorphism.
	
	This is the convention used in the programs \texttt{FastKh} (written by Dror Bar-Natan), \texttt{JavaKh} (written by Jeremy Green, see \cite{barnatan2007fastkhovanovhomologycomputations} for both) and  \texttt{KnotJob} (written by Dirk Schütz \cite{Knotjob}). The first two programs are also available through the \texttt{Mathematica} package \texttt{KnotTheory'} \cite{KnotTheory}. Therefore, this convention is also the one displayed in the KnotAtlas \cite{Knotatlas}. Finally, when displaying results in \autoref{sec:observations}, we also stick to this convention.
	\subsection*{Khovanov's formulation for tangles}
	In tangle-related literature, the convention from Khovanov \cite{Khovanov_2002, khovanov2003patterns} with $q$ and $q^{-1}$ interchanged is used most often. This yields the skein relations:
	\[
	\positivecrossing = q^{-1} \ \underline{\verticalresolution} \to q^{-2} \ \horizontalresolution
	\quad \text{and} \quad
	\negativecrossing = q^{2} \ \horizontalresolution \to q^{1} \ \underline{\verticalresolution}
	\]
	and is also insensitive to framing changes/Reidemeister I moves. We
	denote this version by $\overline{\Kh}$ and get the isomorphism
	$\overline{\Kh}^{i,j}(L) \cong \Kh^{i,-j}(L)$.
	\subsection*{(Unframed) Khovanov--Rozansky \texorpdfstring{$\mathfrak{sl}_2$}{sl2} homology} 
	Khovanov homology also arises
	(over $\Q$) as the $N=2$ case of Khovanov's and Rozansky's $\mathfrak{sl}_N$
	homology \cite[p.10]{khovanov2008matrix}, denoted by
	$\KhRb_2$. In this convention, the skein relations read:
	\[
	\positivecrossing = q^{2} \ \horizontalresolution \to q \ \underline{\verticalresolution}
	\quad \text{and} \quad
	\negativecrossing = q^{-1} \ \underline{\verticalresolution} \to q^{-2} \ \horizontalresolution
	\]
	Compared to the previous skein relations, we exchange positive with negative crossings and possibly $q$ with $q^{-1}$. This yields the isomorphisms $\KhRb_2^{i,j}(L) \cong \overline{\Kh}^{i,j}(L^!) \cong \Kh^{i,-j}(L^!)$, where $L^!$ is the mirror image of the link $L$. This convention is again insensitive to framing/Reidemeister I.
	
	This convention is used in the program \texttt{Khoca} (written by Lukas Lewark and Andrew Lobb \cite{Lewark_2016,Khoca}) since this computes $\mathfrak{sl}_N$ homology. \texttt{Khoca} was used to compute the results in our database \cite{coloredKh}, even though we prefer to show results in \autoref{sec:observations} in the convention of Khovanov's original formulation.
	
	\subsection*{Framed Khovanov--Rozansky \texorpdfstring{$\mathfrak{sl}_2$}{sl2} homology}
	 Other than the previous conventions, there is also a version of Khovanov--Rozansky homology introduced by \cite{Morrison_2022} which is sensitive to framing changes (resp.\ Reidemeister I moves or the writhe $w(D)$ of a diagram $D$ for the link $L$). We denote this version by $\KhR_2$. It corresponds to the skein relations:
	\[
	\positivecrossing = q \ \horizontalresolution \to \underline{\verticalresolution}
	\quad \text{and} \quad
	\negativecrossing = \underline{\verticalresolution} \to q^{-1} \ \horizontalresolution
	\]
	This yields $\KhR_2(L) \cong \KhRb_2(L)\{-w(D)\}$ or explicitly 
	$$\KhR_2^{i,j}(L) \cong \KhRb_2^{i,j+w(D)}(L)$$
	 for a link $L$ with diagram $D$ such that the framing of $L$ is the blackboard framing of $D$.

\subsection{Defining variations of colored Khovanov homology}
\label{subsec:definitions}

We start with preparatory work to state the various definitions of colored Khovanov homology sketched in the introduction for knots and extend the constructions to links later. 
Let $\commR$ be a commutative ring and denote by $\Kh(L;\commR)$ the Khovanov homology of a link $L$ with coefficients in $\commR$. Most of the time, the choice of commutative ring $\commR$ will be clear from context, so we suppress it in the notation and use $\Kh(L)$ instead. Fix a framed, oriented knot $K$ and consider its $n$-cable $K^n$ by introducing parallel copies along the framing with alternating orientations of components such that the first component carries the orientation of $K$.

We get an action of the braid group $\mathrm{Br}_{n}$ on the Khovanov homology of the cable $\Kh(K^n)$ by considering component interchanging cobordisms and their induced cobordism maps. Note that we require a fully functorial version of Khovanov homology for this throughout \cite{Caprau2008,Clark_2009,Blanchet2010,Ehrig_2018}. At this point, we need to be careful, since interchanging components will not preserve the chosen orientation of $K^n$. Reversing the orientation of a component \enquote{just} introduces a constant grading shift on Khovanov homology. For cables, this only depends on the total number of components with (anti-) parallel orientation. Thus, we can reorient for free and think of the cobordism map as an honest endomorphism of $\Kh(K^n)$ with correct orientations.

By a result of Grigsby--Licata--Wehrli \cite{Grigsby_2017}, this action of the braid group factors through the symmetric group $S_n$. Initially, this was stated in a setting of annular links and the Temperley--Lieb category with circle value $-2$, but the same argument works in the usual setting. A spelled-out version for sufficiently functorial Khovanov--Rozansky type A link homologies can be found in \cite{Gorsky2023-ih}. Further, we do not need to restrict the commutative ring we are working over for this to hold true. From this $S_n$-action, we will eventually define two versions of colored Khovanov homology.

To construct the other versions, we adapt the point of view of \cite{Hogancamp2025-ol} and think of Khovanov homology of the fixed oriented, framed knot $K$ with coefficients in $\commR$ as a functor
$$ \Kh_K^\commR =:  \Kh_K  \colon \TL^\commR(2) =: \TL \to \Mod^{\Z \times \Z}.$$
The cabling functor $\Kh_K$ sends an object $n \in \TL$ of the Temperley--Lieb category to the bigraded $\commR$-module $\Kh(K^n)$. 
The action on morphisms is generated by the maps corresponding to $\tlcapids{k}{\ell} \in \Hom_\TL(k+\ell+2,k+\ell)$ and  $\tlcupids{k}{\ell} \in \Hom_\TL(k+\ell,k+\ell+2)$---or in words---caps/cups between all possible numbers of identity stands. The former is sent to the map on Khovanov homology induced by the annulus between two adjacent components at the position of the cap and leaving all other components unchanged. Similarly, the latter creates two new copies of $K$ with alternating orientations at the position of the cup. Since the orientation of $K^n$ alternates between components, both of them are well-defined. The cobordism associated to $\Kh_K\left(\tlcapids{k}{\ell}\right)$ can be described as the composition of a saddle that merges the two components followed by a series of isotopies to separate the resulting loop from the link and finally a cap to contract it. The cobordism representing the morphism $\Kh_K\left(\tlcupids{k}{\ell}\right)$ enjoys a similar description. 

From the equation $\tlcap \circ \tlcup = 2$ we obtain that if $2$ is not a
zero-divisor in $\commR$, $\Kh_K\left(\tlcupids{k}{\ell}\right)$ is injective.
The same arguments applies in the presence of additional identity strands. This
uses that every torus, regardless of embedding, acts as multiplication by $2$ in
Khovanov homology \cite{rasmussen2005khovanovsinvariantclosedsurfaces}
\footnote{This is our main motivation to work with $\TL = \TL(2)$. The sign
choices used for the functoriality fix in \cite{Grigsby_2017} sets the circle
parameter to $-2$, see also \cite[Proof of Prop
4.4]{ren2024leefiltrationstructuretorus} for a discussion. The results of our work are agnostic about the sign of the circle parameter, so in this paper, we use $+2$ for
concreteness.}. Since $\Mod^{\Z \times \Z}$ is idempotent complete and additive,
we extend this functor to the additive closure of the Karoubi envelope
$\Mat(\Kar(\TL))$.
If $e \in \End_\TL(n)$ is an idempotent, $\Kh_K(e) \in \End(\Kh(K^n))$ is a projection onto the submodule $\im \Kh_K(e)$. The extension of $\Kh_K$ to the Karoubi envelope maps the object corresponding to the idempotent $e$ to the module $\im \Kh_K(e)$.
Note that since the Khovanov homology of every link is
finitely generated, the image of $\Kh_K$ consists only of finitely generated
$\commR$-modules. Similarly, we can extend $\Kh_K$ to chain complexes over $\Mat(\Kar(\TL))$. We identify all these versions of $\Kh_K$ along the inclusions $\TL \hookrightarrow \Mat(\Kar(\TL)) \hookrightarrow \Ch(\Mat(\Kar(\TL)))$, where the last arrow considers an object as a chain complex concentrated in homological degree zero. Since $\Kh_K$ is additive, it preserves chain homotopy equivalence.

For the remaining versions of $n$-colored Khovanov homology, we will need the cochain complex $C_n$. A convenient way to construct it is to first describe an auxiliary diagram category $\Gamma_n$ and a cochain complex $\overline{C}_n$ over it:

\begin{construction}
	\label{cons:cube} 
	Let $n \in \N$. A composition of $n$ is a tuple of positive integers summing to $n$.
	Recall that the set of compositions of $n$ admits a partial order by coarsening. Concretely, let $s = (s_1, s_2, \dots, s_k)$ be a composition of $n$, i.e.\ $n = \sum_{i =1}^k s_k$, then for all $1 \leq i \leq k-1 $ we have another composition $s'_i=(s_1, \dots, s_i + s_{i+1}, s_{i+2}, \dots, s_k )$ given by adding the $i$-th and $i+1$-th term of $s$. The partial order on the set of compositions is then reflexively and transitively generated by the relations $s \leq s'_i$.
		
	Denote by $\Gamma_n$ the subposet of compositions that only use $1$ and $2$. In the $\Z$-linearization of this category, the abelian group of morphisms $\Hom(s,s')$ from $s$ to $s'$ is either isomorphic to $\{0\}$ if $s \not \leq s'$ or to $\Z$ if $s \leq s'$. We fix isomorphisms such that composition is given by multiplication in $\Z$. Since every square in a poset category commutes, a square in the $\Z$-linearization commutes if and only if the multiplication of integers gives the same result. Finally, we form the additive closure to obtain the category over which we now construct the cochain complex $\overline{C}_n$ with differential $\overline{d}$ as follows: In homological degree $k$ we place the direct sum of all compositions in $\Gamma_n$ containing 2 exactly $k$ times. The $k$-th component of $\overline{d}$ is given in terms of maps between components $s$ in degree $k$ and $s'$ in degree $k+1$. Note that the number of twos increases by exactly one when following the differential:
	\begin{itemize}
		\item If $s \not \leq s'$ as compositions, the differential has to be $0$ between these components.
		\item If  $s \leq s'$ as compositions, then there is one unique new two in $s'$ relative to $s$. Denote the number of twos to the left of this one by $\ell$ and set the morphism between these components to $(-1)^\ell$.
		Without these signs, all the squares without zero morphisms would commute as mentioned above. Including the signs makes all such squares contain an odd number of signs, i.e.\ anti-commute. Therefore, $\overline{d} \circ \overline{d} = 0$ is indeed satisfied. Any sign placement making every square without zero morphisms anti-commute would yield an isomorphic complex as discussed in \cite[Lemma 2.1]{Beliakova_Wehrli_2008}.
	\end{itemize}
	
\end{construction}

\begin{remark}
	\label{rem:subcomplex}
	The category $\Gamma_n$ is called the \emph{Fibonacci cube}. By realizing it as a subset of the $(n-1)$-dimensional hypercube of all compositions, $\Gamma_n$ naturally inherits its poset structure \cite{205649}. Using this point of view, we can alternatively describe $\overline{C}_n$ as the totalization of the $\Z$-linearization of $\Gamma_n$ considered as a diagram which is isomorphic by \cite[Lemma 2.1]{Beliakova_Wehrli_2008}. We could have also taken the totalization of the whole $\Z$-linearized cube of compositions and consider the quotient by the subcomplex containing numbers larger than $2$ in compositions to construct an isomorphic cochain complex.
\end{remark}

\begin{example}
	\label{ex:cbar}
	In \cite{khovanov2005categorifications, Beliakova_Wehrli_2008}, the complex $\overline{C}_n$ is described in terms of \emph{paired dots}. In order to recover this way of constructing $\overline{C}_n$, we visualize objects of $\Gamma_n$ as configurations of dots and barbells (or unpaired and paired dots), where 1 corresponds to $\begin{tikzpicture}[anchorbase]
		\filldraw[black] (0,0) circle (2pt);
	\end{tikzpicture}$ and 2 corresponds to $\begin{tikzpicture}[anchorbase]
		\filldraw[black] (0,0) circle (2pt);
		\filldraw[black] (0.5,0) circle (2pt);
		\draw[ultra thick] (0,0) -- (0.5,0);
	\end{tikzpicture}$.
	For $n =4$ the complex $\overline{C}_n$ from \autoref{cons:cube} takes the form on the left-hand side of \autoref{fig:gammacomplex}. The right-hand side shows the full cube of compositions using paired configurations of three and four dots analogously to represent the numbers $3$ and $4$. Totalizing and forming the quotient by the gray subcomplex (indicated by the gray dot in the subscript) yields an isomorphic complex as mentioned in \autoref{rem:subcomplex}.
	\begin{figure}[H]
		\centering
		\begin{gather*}
		\begin{tikzpicture}[anchorbase]
			\filldraw[black] (0,0) circle (2pt);
			\filldraw[black] (0.5,0) circle (2pt);
			\filldraw[black] (1,0) circle (2pt);
			\filldraw[black] (1.5,0) circle (2pt);
			\draw[ultra thick] (1,0) -- (1.5,0);
			\filldraw[black] (0,2) circle (2pt);
			\filldraw[black] (0.5,2) circle (2pt);
			\filldraw[black] (1,2) circle (2pt);
			\filldraw[black] (1.5,2) circle (2pt);
			\draw[ultra thick] (0.5,2) -- (1,2);
			\draw  (0.75,3) node {$\bigoplus$};
			\draw  (0.75,1) node {$\bigoplus$};
			\filldraw[black] (-3,2) circle (2pt);
			\filldraw[black] (-2.5,2) circle (2pt);
			\filldraw[black] (-2,2) circle (2pt);
			\filldraw[black] (-1.5,2) circle (2pt);
			\filldraw[black] (3,2) circle (2pt);
			\filldraw[black] (3.5,2) circle (2pt);
			\filldraw[black] (4,2) circle (2pt);
			\filldraw[black] (4.5,2) circle (2pt);
			\draw[ultra thick] (3,2) -- (3.5,2);
			\draw[ultra thick] (4,2) -- (4.5,2);
			\filldraw[black] (0,4) circle (2pt);
			\filldraw[black] (0.5,4) circle (2pt);
			\filldraw[black] (1,4) circle (2pt);
			\filldraw[black] (1.5,4) circle (2pt);	
			\draw[ultra thick] (0,4) -- (0.5,4);
			\draw[->] (-1.3,2) -- (-0.2,2);
			\draw[->] (-1.5,2.2) -- (0.5, 3.8);
			\draw[->] (-1.5,1.8) -- (0.5,0.2);
			\draw[->] (1,0.2) -- (3,1.8);
			\draw[->, red] (1, 3.8) -- (3,2.2);
		\end{tikzpicture}
	\  \cong \
	\mathrm{Tot}_{\begin{tikzpicture}[anchorbase]
			\filldraw[gray] (0,0) circle (2pt);
	\end{tikzpicture}}\left( 
	\begin{tikzpicture}[anchorbase]
		\filldraw[black] (0,0,0) circle (2pt);
		\filldraw[black] (0.5,0,0) circle (2pt);
		\filldraw[black] (1,0,0) circle (2pt);
		\filldraw[black] (1.5,0,0) circle (2pt);
		\filldraw[black] (1.5,0,-4) circle (2pt);
		\filldraw[black] (1,0,-4) circle (2pt);
		\filldraw[black] (0.5,0,-4) circle (2pt);
		\filldraw[black] (0,0,-4) circle (2pt);
		\draw[ultra thick] (0,0,-4) -- (0.5,0,-4);
		\filldraw[black] (4.5,0,0) circle (2pt);
		\filldraw[black] (4,0,0) circle (2pt);
		\filldraw[black] (3.5,0,0) circle (2pt);
		\filldraw[black] (3,0,0) circle (2pt);
		\draw[ultra thick] (4,0,0) -- (4.5,0,0);
		\filldraw[black] (4.5,0,-4) circle (2pt);
		\filldraw[black] (4,0,-4) circle (2pt);
		\filldraw[black] (3.5,0,-4) circle (2pt);
		\filldraw[black] (3,0,-4) circle (2pt);
		\draw[ultra thick] (4,0,-4) -- (4.5,0,-4);
		\draw[ultra thick] (3,0,-4) -- (3.5,0,-4);
		\filldraw[black] (0,3,0) circle (2pt);
		\filldraw[black] (0.5,3,0) circle (2pt);
		\filldraw[black] (1,3,0) circle (2pt);
		\filldraw[black] (1.5,3,0) circle (2pt);
		\draw[ultra thick] (0.5,3,0) -- (1,3,0);
		\draw[->] (0.75,0.1,0) -- (0.75,2.8,0);
		\draw[->] (1.7,0,0) -- (2.8,0,0);
		\draw[->] (1.7,0,-4) -- (2.8,0,-4);
		\draw[->] (0.75,0,-0.1) -- (0.75, 0,-3.5);
		\draw[->] (3.75,0,-0.1) -- (3.75, 0,-3.5);
		\filldraw[gray] (1.5,3,-4) circle (2pt);
		\filldraw[gray] (1,3,-4) circle (2pt);
		\filldraw[gray] (0.5,3,-4) circle (2pt);
		\filldraw[gray] (0,3,-4) circle (2pt);
		\draw[ultra thick, gray] (0,3,-4) -- (1,3,-4);
		\filldraw[gray] (4.5,3,0) circle (2pt);
		\filldraw[gray] (4,3,0) circle (2pt);
		\filldraw[gray] (3.5,3,0) circle (2pt);
		\filldraw[gray] (3,3,0) circle (2pt);
		\draw[ultra thick, gray] (3.5,3,0) -- (4.5,3,0);
		\filldraw[gray] (4.5,3,-4) circle (2pt);
		\filldraw[gray] (4,3,-4) circle (2pt);
		\filldraw[gray] (3.5,3,-4) circle (2pt);
		\filldraw[gray] (3,3,-4) circle (2pt);
		\draw[ultra thick, gray] (4.5,3,-4) -- (3,3,-4);
		\draw[->, gray, dotted] (0.75,0.1,-4) -- (0.75,2.8,-4);
		\draw[->, gray, dotted] (3.75,0.1,0) -- (3.75,2.8,0);
		\draw[->, gray, dotted] (3.75,0.1,-4) -- (3.75,2.8,-4);
		\draw[->, dotted, gray] (1.7,3,0) -- (2.8,3,0);
		\draw[->, gray, dotted] (1.7,3,-4) -- (2.8,3,-4);
		\draw[->, gray, dotted] (0.75,3,-0.1) -- (0.75, 3,-3.5);
		\draw[->, gray, dotted] (3.75,3,-0.1) -- (3.75, 3,-3.5);
	\end{tikzpicture}
	\right)
		\end{gather*}
		\caption{The complex $\overline{C}_4$ over the additive closure of $\Gamma_4$. Red arrows indicate morphisms that carry the sign $-1$. On the right, we see the poset structure of $\Gamma_4$ embedded in the three dimensional cube highlighting the complement in gray. Quotienting by the gray part and totalizing (in either order) yields an isomorphic complex.} 
		\label{fig:gammacomplex}
	\end{figure}
\end{example}
\begin{lemma}
		There is a unique functor $\Gamma_n \to \TL$ that sends a composition $s = (s_1, s_2, \dots, s_k) \in \Gamma_n$ to the number of ones $r_s$ in it (as an object of the Temperley--Lieb category) and acts on morphisms by
		\[ \Hom(s,s'_i) \ni \ast \mapsto \tlcapids{a}{b} \]
		whenever $s'_i=(s_1, \dots, s_{i-1}, s_i + s_{i+1}, s_{i+2}, \dots, s_k ) \in \Gamma_n$ for $1 \leq i \leq k-1 $ as above. Here, $a$ (resp.\ $b$) denotes the number of ones in $s'_i$ left (resp.\ right) of $s_i + s_{i+1}$. In particular, $a+b = r_s -2$.
\end{lemma}
\begin{proof}
	Note that $\Gamma_n$ is transitively and reflexively generated by relations $s_i \leq s'_i$ when $s'_i \in \Gamma_n$. 
	We use this to define the functor on general morphisms and have to check that every square in $\Gamma_n$ is mapped to a commutative square in $\TL$.
	Let $s \in \Gamma_n$. If $r_s < 4$, there is no square in $\Gamma_n$ starting at $s$, so suppose $s$ is of the form $s = (\dots 1,1, \dots, 1, 1, \dots)$ with $4 \leq r_s$ ones.
	Now we have the commutative square
		\begin{gather*}
	\begin{tikzcd}[ampersand replacement=\&]
	s = (\dots, 1,1, \dots, 1, 1, \dots)	\ar[d] \ar[r]  \&  (\dots, 2, \dots, 1, 1, \dots) \ar[d] \\
		(\dots, 1,1, \dots, 2, \dots) \ar[r] \&   (\dots, 2, \dots, 2, \dots)\\
	\end{tikzcd} 
	\quad \text{that is sent to} \quad
	\begin{tikzcd}[ampersand replacement=\&]
		r_s	\ar[d] \ar[r]  \&  r_s-2 \ar[d] \\
		r_s-2 \ar[r] \&   r_s-4\\
	\end{tikzcd}
\end{gather*}
	with the top and bottom composites being
	\begin{gather*}
		\begin{tikzpicture}[anchorbase, scale=0.6]
			\draw[usual] (3.3,0) to (3.3,1) to[out=90, in=180] (3.8,1.5) to[out=0, in=90] (4.3,1) to (4.3,0);
			\draw[usual] (1,0) to[out=90, in=180] (1.5,0.5) to[out=0, in=90]  (2,0);
			\draw[usual] (0,0) to (0,2);
			\node at (0.4,1) {\tiny $\dots$};
			\draw[usual] (0.7,0) to (0.7,2);
			\draw[usual] (2.3,0) to (2.3,2);
			\node at (2.7,1) {\tiny $\dots$};
			\draw[usual] (3,0) to (3,2);
			\draw[usual] (0.7,0) to (0.7,2);
			\draw[usual] (4.6,0) to (4.6,2);
			\node at (5,1) {\tiny $\dots$};
			\draw[usual] (5.3,0) to (5.3,2);
		\end{tikzpicture}
			\quad \text{and} \quad
		\begin{tikzpicture}[anchorbase, scale=0.6]
			\draw[usual] (0,0) to (0,2);
			\node at (0.4,1) {\tiny $\dots$};
			\draw[usual] (0.7,0) to (0.7,2);
			\draw[usual] (1,0) to (1,1) to[out=90, in=180] (1.5,1.5) to[out=0, in=90] (2,1) to (2,0);
			\draw[usual] (2.3,0) to (2.3,2);
			\node at (2.7,1){\tiny $\dots$};
			\draw[usual] (3,0) to (3,2);
			\draw[usual] (0.7,0) to (0.7,2);
			\draw[usual] (3.3,0) to[out=90, in=180] (3.8,0.5) to[out=0, in=90]  (4.3,0);
			\draw[usual] (4.6,0) to (4.6,2);
			\node at (5,1) {\tiny $\dots$};
			\draw[usual] (5.3,0) to (5.3,2);
		\end{tikzpicture}.
	\end{gather*}
	These are isotopic diagrams and therefore equal morphisms in $\TL$. Hence, the right square is commutative. Since the $1$-skeleton of a hypercube commutes if and only if every square commutes, this assignment is indeed functorial.
\end{proof}

We use this functor to construct the complex $C_n$ from $\overline{C}_n$ as follows:
\begin{construction}
	\label{cons:complex}
	Consider the functor $\Gamma_n \to \TL$ from the previous lemma. Extend it both additively and to chain complexes over $\Mat\left(\Kar\left(\TL\right)\right)$. Finally, denote by $C_n$ the image of $\overline{C}_n$ under this functor.
	The resulting complex $C_n$ (for even $n$, not showing the precise differential) is of the form
	\[
	\begin{tikzcd}
		\emptyset \arrow[r] & 	\underline{n} \arrow[r] & (n-2)^{\oplus \binom{n-1}{1}} \arrow[r] & \dots \arrow[r] & (n-2k)^{\oplus \binom{n-k}{k}} \arrow[r] & \dots \arrow[r] & 0 \arrow[r] & \emptyset
	\end{tikzcd}
	\]
	where homological degree zero is underlined. 
	Here, we denote the zero object by $\emptyset$ to avoid confusion with the monoidal unit $0 \in \TL$.
	Khovanov \cite{khovanov2005categorifications} originally introduced this complex in terms of representations of the Lie algebra $\mathfrak{sl}_2$. Applying the automorphism $(-)^\dynkin$ of $\TL$ to $C_n$ yields an isomorphic complex by (signed) reordering of components. This automorphism of $C_n$ can be understood on the level of $\overline{C}_n$ by the corresponding automorphism of $\overline{C}_n$ given (up to sign) by mapping a composition with entries $(s_1, \dots, s_k)$ to the composition $(s_k, \dots, s_1)$.
\end{construction}

\begin{example}
	Applying \autoref{cons:complex} to the complex $\overline{C}_4$ from \autoref{ex:cbar}, we obtain the complex $C_4$ as shown on the left of \autoref{fig:tlcomplex}. The right-hand side highlights the description as a cone that will become important in the proof of \autoref{lem:hte}.
	\begin{figure}[H]
	\centering
	\begin{gather*}
		\left(
\begin{tikzpicture}[baseline=-0.5ex]
	\node at (-3,0) (A) {$\underline{4}$};
	\node at (0,0) (B) {$2^{\oplus 3}$};
	\node at (3,0) (C) {$0$};
	\draw[->] (A) -- node[midway, above] {$\begin{pmatrix}
			\begin{tikzpicture}[scale= 0.25]
				\draw[usual,-] (1,0) to[out=90, in=180] (1.5,0.5) to[out=0, in=90] (2,0);
				\draw[usual,-] (2.5, 0) to (2.5,1);
				\draw[usual,-] (3, 0) to (3,1);
			\end{tikzpicture}\\
			\begin{tikzpicture}[scale= 0.25]
				\draw[usual,-] (0.5, 0) to (0.5,1);
				\draw[usual,-] (1,0) to[out=90, in=180] (1.5,0.5) to[out=0, in=90] (2,0);
				\draw[usual,-] (2.5, 0) to (2.5,1);
			\end{tikzpicture}\\
			\begin{tikzpicture}[scale= 0.25]
				\draw[usual,-] (0, 0) to (0,1);
				\draw[usual,-] (0.5, 0) to (0.5,1);
				\draw[usual,-] (1,0) to[out=90, in=180] (1.5,0.5) to[out=0, in=90] (2,0);
			\end{tikzpicture}
		\end{pmatrix}$} (B);
	\draw[->] (B) -- (C) node[midway, above] {$\begin{pmatrix}
			-\begin{tikzpicture}[scale= 0.25]
				\draw[usual,-] (1,0) to[out=90, in=180] (1.5,0.5) to[out=0, in=90] (2,0);
			\end{tikzpicture}
			&
			0
			&
			\begin{tikzpicture}[scale= 0.25]
				\draw[usual,-] (1,0) to[out=90, in=180] (1.5,0.5) to[out=0, in=90] (2,0);
			\end{tikzpicture}
		\end{pmatrix}$};
\end{tikzpicture}
\right)
\quad = \quad \left(
\begin{tikzpicture}[anchorbase]
				\node at (-1.7,2) {$4$};
				\node at (0.75,0) {$2$};
				\node at (0.75,2) {$2$};
				\node at (0.75,4) {$2$};
				\node at (3.2,2) {$0$};
				\draw  (0.75,3) node {$\bigoplus$};
				\draw  (0.75,1) node {$\bigoplus$};
				\draw[->] (-1.3,2) -- (0.5,2) node[midway, above left](A) {};
				\draw[->] (-1.5,2.2) -- (0.5, 3.8) node[midway, left](B) {};
				\draw[->, blue] (-1.5,1.8) -- (0.5,0.2);
				\draw[->] (1,0.2) -- (3,1.8);
				\draw[->, blue] (1, 3.8) -- (3,2.2);
				\begin{scope}[shift={(-1.125,3.2)}, scale= 0.25]
						\draw[usual] (1,0) to[out=90, in=180] (1.5,0.5) to[out=0, in=90] (2,0);
						\draw[usual] (2.5, 0) to (2.5,1);
						\draw[usual] (3, 0) to (3,1);
					\end{scope}
				\begin{scope}[shift={(-0.5,2.2)}, scale= 0.25]
						\draw[usual] (0.5, 0) to (0.5,1);
						\draw[usual] (1,0) to[out=90, in=180] (1.5,0.5) to[out=0, in=90] (2,0);
						\draw[usual] (2.5, 0) to (2.5,1);
					\end{scope}
				\begin{scope}[shift={(-1.125,0.8)}, scale= 0.25]
						\draw[usual] (0, 0) to (0,1);
						\draw[usual] (0.5, 0) to (0.5,1);
						\draw[usual] (1,0) to[out=90, in=180] (1.5,0.5) to[out=0, in=90] (2,0);
					\end{scope}
				\begin{scope}[shift={(1.9,3.2)}, scale= 0.25]
						\draw[usual] (1,0) to[out=90, in=180] (1.5,0.5) to[out=0, in=90] (2,0);
					\end{scope}
				\begin{scope}[shift={(1.7,0.8)}, scale= 0.25]
						\draw[usual] (1,0) to[out=90, in=180] (1.5,0.5) to[out=0, in=90] (2,0);
					\end{scope}
				\node at (2,3.25) {$-$};
			\end{tikzpicture}
			\right)
\end{gather*}
	\caption{The complex $C_4$ over $\Mat(\Kar(\TL))$: On the left-hand side as it was constructed in \autoref{cons:complex}. On the right-hand side, it can be seen as a cone on a chain map along the blue arrows (where the signs are introduced by our convention of cones below).}
	\label{fig:tlcomplex}
\end{figure}
\end{example}

\begin{lemma}
	\label{lem:hte}
	Consider the complex $C_n$ as constructed in \autoref{cons:complex} over a field of characteristic zero. As a bounded chain complex over $\Mat\left(\Kar\left(\TL\right)\right)$ it is homotopy equivalent to 
	\[
	\begin{tikzcd}[ampersand replacement=\&]
		\emptyset \arrow[r] \& 	\underline{\karjw} \arrow[r] \&  \emptyset
	\end{tikzcd}
	\]
	where (from now on) $\karjw$ denotes the object in $\Kar\left(\TL\right)$ corresponding to the Jones--Wenzl idempotent $p_n$.
\end{lemma}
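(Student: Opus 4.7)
I would proceed by induction on $n$, with base cases $n = 0$ and $n = 1$ being immediate: in both cases $C_n$ is concentrated in degree zero with sole summands $0 = p_0$ and $1 = p_1$ respectively.

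For the inductive step, I would exploit the recursive structure of the Fibonacci poset $\Gamma_n$ by partitioning each composition according to whether it ends in $1$ or in $2$. Compositions of $n$ ending in $1$ biject with compositions of $n-1$ (at the same homological degree), while those ending in $2$ biject with compositions of $n-2$ at one lower homological degree (since the trailing $2$ contributes to the grading of $\overline{C}_n$). As coarsening respects this split and cannot erase a trailing $2$, the ``ending in $2$'' part forms a subcomplex, yielding a short exact sequence
\[
0 \to C_{n-2}[-1] \to C_n \to C_{n-1} \otimes 1 \to 0
\]
in $\Mat(\Kar(\TL))$, where $C_{n-1} \otimes 1$ is $C_{n-1}$ with an extra identity strand attached on the right. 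Equivalently, $C_n \simeq \Cone(g\colon C_{n-1} \otimes 1 \to C_{n-2})[-1]$, where the connecting map $g$ is induced by the cap on the last two strands, categorifying the combinatorial move of combining two trailing $1$s into a $2$.

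Replacing $C_{n-1}$ and $C_{n-2}$ by $p_{n-1}$ and $p_{n-2}$ using the inductive hypothesis yields $C_n \simeq \Cone(\bar g\colon p_{n-1} \otimes 1 \to p_{n-2})[-1]$. Over a field of characteristic zero, the Clebsch--Gordan decomposition $p_{n-1} \otimes 1 \cong p_n \oplus p_{n-2}$ holds in $\Mat(\Kar(\TL))$, mirroring $V_{n-1} \otimes V_1 \cong V_n \oplus V_{n-2}$ for $\mathfrak{sl}_2$-representations. The induced map $\bar g$ annihilates the $p_n$ summand (since $p_n$ absorbs every cap) and restricts to a nonzero scalar multiple of the identity on the $p_{n-2}$ summand: by Schur's lemma, $\bar g|_{p_{n-2}}$ is either zero or an isomorphism onto the unique $p_{n-2}$ summand of $n-2$, and the former is excluded by the Wenzl partial trace formula (the scalar is $[n]/[n-1]$, which at $q = 1$ equals $n/(n-1)$, nonzero in characteristic zero). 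Gaussian elimination then cancels the matching copies of $p_{n-2}$ across degrees $0$ and $1$ of the shifted cone, leaving precisely $p_n$ in degree zero.

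The hardest part is rigorously verifying nonvanishing of $\bar g$ on the $p_{n-2}$ summand and carefully tracking homological shifts and signs in the mapping cone decomposition. An alternative strategy avoids induction by using semisimplicity of $\Mat(\Kar(\TL))$ in characteristic zero: one combines the Euler characteristic computation $[C_n] = [p_n]$ (which follows from the classical binomial identity $[p_n] = \sum_k (-1)^k \binom{n-k}{k}[n-2k]$ in the Grothendieck group) with the chain map $(p_n$ in degree $0) \to C_n$ induced by the inclusion $p_n \hookrightarrow n$ (well-defined since $p_n$ annihilates caps), and checks via Schur-type arguments on isotypic components that this chain map is a homotopy equivalence.
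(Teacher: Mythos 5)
Your proposal follows essentially the same route as the paper's proof: induction on $n$, splitting $\Gamma_n$ by whether compositions end in $1$ or $2$ to get the cone description $C_n \simeq \Cone(C_{n-1}\otimes 1 \to C_{n-2})$ (up to shift convention), replacing the terms by $p_{n-1}$ and $p_{n-2}$ via the inductive hypothesis, using $p_{n-1}\otimes 1 \cong p_n \oplus p_{n-2}$, and cancelling the matched $p_{n-2}$ summands by Gaussian elimination. Your explicit justification of the nonvanishing of the map on the $p_{n-2}$ summand (Schur plus the partial trace scalar, nonzero in characteristic zero) fills in a step the paper only asserts, so the argument is correct as written.
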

A proof of this statement using the concept of Chebyshev systems appears in \cite[Section 4]{beliakova2025unificationcoloredannularsl2}. We include a direct proof for the readers' convenience. 
\begin{proof}
	We want to argue inductively. The cases $n = 0,1$ are clear since both complexes are identical. For general $n$ we argue as follows:
	
	As already pointed out in \cite{205649, khovanov2005categorifications}, there are
	two kinds of compositions contributing to the direct sum in every
	homological degree of the complex $\overline{C}_n$: Compositions ending in 2 and
	compositions that end in a 1. Since the differential never splits a 2 into
	$1+1$, the former is a subcomplex isomorphic to $\overline{C}_{n-2}$ shifted once to
	the right that will denoted by $\overline{C}_{n-2} [-1]$ and the latter class of
	compositions can be obtained as the quotient complex by $\overline{C}_{n-2} [-1]$ and
	is isomorphic to $\overline{C}_{n-1} \otimes 1$. 

	This filtration arises as a cone of the two complexes, such that we can write
	\begin{equation*}
		\overline{C}_n = \Cone(\overline{C}_{n-1} \otimes 1 \xrightarrow{\overline{\delta}} \overline{C}_{n-2}).
	\end{equation*} 
	where the non-zero components of $\overline{\delta}$ are given by addition of the last two ones in a composition. In order to match the signs on both sides, we adapt the following convention for a cone on a chain map $f\colon (A,d_A) \to (B,d_B)$ between complexes. In homological degree $k$, we place the chain object $A_k \oplus B_{k-1}$ and use the differential
	\begin{align*}
	d_{\Cone(f);k}\colon A_k \oplus B_{k-1} \to  A_{k+1} \oplus B_{k} \quad \text{given by the matrix} \quad 
	\begin{pmatrix}
		d_{A,k} & 0 \\
		(-1)^k f_k & d_{B,k-1}
	\end{pmatrix}
	\end{align*}
	where the subscripts indicate the sources of the maps.
	This description as a cone is preserved by the functor mapping $\overline{C}_n$ to $C_n$ where $\delta$ is the image of $\overline{\delta}$ under the functor used in \autoref{cons:complex}:
	\begin{gather}
		\label{eq:cone}
		C_n = \Cone(C_{n-1} \otimes 1 \xrightarrow{\delta} C_{n-2})
		\quad
		\text{and has components}
		\quad 
		\delta_k = 
		\begin{tikzpicture}[anchorbase, scale=0.45]
			\draw[usual] (-0.5, -1) to (-0.5,1);
			\draw (0.05,0) node {$\dots$};
			\draw[usual] (0.5, -1) to (0.5,1);
			\draw[usual] (1,-1) to[out=90, in=180] (1.5,-0.5) to[out=0, in=90] (2,-1);
		\end{tikzpicture}
	\end{gather}
	with $n-2k-2$ parallel strands.
	By induction, $C_{n-1}$ and $C_{n-2}$ are homotopy equivalent to the one-term complexes having $\karjw[n-1]$ and $\karjw[n-2]$ in degree $0$. Now we can replace these two terms in the cone \eqref{eq:cone} to obtain a homotopy equivalence 
	
	\[C_n = \Cone(C_{n-1} \otimes 1 \xrightarrow{\delta} C_{n-2}) \simeq \Cone(\karjw[n-1] \otimes 1 \xrightarrow{\beta} \karjw[n-2]) 
	\quad \text{with}\quad 
	\beta = \
	\begin{tikzpicture}[anchorbase,scale=.3]
		\draw[usual] (0.7,-1.5) to (0.7,1.5);
		\draw[usual] (-1.2,-1.5) to (-1.2,1.5);
		\draw[usual] (1.2,-1.5) to (1.2,.5) to [in=180, out = 90] (1.7,1) to[out = 0, in = 90] (2.2, .5) to (2.2,-1.5);
		\draw (-0.15,-1) node {$\dots$};
		\draw (-0.15,1) node {$\dots$};
		\draw[JW, fill=white] (-1.5,-.5) rectangle (1.5,.5);
		\node at (0,0) {\small $n-1$};
	\end{tikzpicture}
	\]
	where $\begin{tikzpicture}[anchorbase,scale=.3]
		\draw[JW, fill=white] (-1.5,-.5) rectangle (1.5,.5);
		\node at (0,0) {\small $n-1$};
	\end{tikzpicture}$ denotes the $(n-1)$-th Jones--Wenzl projector. (The replacement can be seen as an easy instance of homological perturbation, see Markl \cite{markl2001ideal} and also Hogancamp \cite{hogancamp2020homological}.) 
	
	Using that $\karjw[n-1] \otimes \id_1 \cong \karjw \oplus \karjw[n-2]$ in $\Mat\left(\Kar\left(\TL\right)\right)$ via the pair of mutually inverse maps
	\begin{gather*}
		r = \begin{pmatrix}
			\begin{tikzpicture}[anchorbase,scale=.3]
				\draw[usual] (1.3,-1.5) to (1.3,1.5);
				\draw[usual] (-1.3,-1.5) to (-1.3,1.5);
				\draw[JW, fill=white] (-1.5,-.5) rectangle (1.5,.5);
				\node at (0,0) {\small $n$};
				\draw (0.1,-1) node {$\dots$};
				\draw (0.1,1) node {$\dots$};
			\end{tikzpicture}\\
			\begin{tikzpicture}[anchorbase,scale=.3]
				\draw[usual] (0.7,-1.5) to (0.7,1.5);
				\draw[usual] (-1.2,-1.5) to (-1.2,1.5);
				\draw[usual] (1.2,-1.5) to (1.2,.5) to [in=180, out = 90] (1.7,1) to[out = 0, in = 90] (2.2, .5) to (2.2,-1.5);
				\draw (-0.15,-1) node {$\dots$};
				\draw (-0.15,1) node {$\dots$};
				\draw[JW, fill=white] (-1.5,-.5) rectangle (1.5,.5);
				\node at (0,0) {\small $n-1$};
			\end{tikzpicture}
		\end{pmatrix}
		\quad
		\text{and}
		\quad
		s=\begin{pmatrix}
			\begin{tikzpicture}[anchorbase,scale=.3]
				\draw[usual] (1.3,-1.5) to (1.3,1.5);
				\draw[usual] (-1.3,-1.5) to (-1.3,1.5);
				\draw[JW, fill=white] (-1.5,-.5) rectangle (1.5,.5);
				\node at (0,0) {\small $n$};
				\draw (0.1,-1) node {$\dots$};
				\draw (0.1,1) node {$\dots$};
			\end{tikzpicture}&
			\frac{[n-1]}{[n]}
			\begin{tikzpicture}[anchorbase,scale=.3]
				\draw[usual] (0.7,-1.5) to (0.7,1.5);
				\draw[usual] (-1.2,-1.5) to (-1.2,1.5);
				\draw[usual] (1.2,1.5) to (1.2,-.5) to [in=180, out = 270] (1.7,-1) to[out = 0, in = 270] (2.2, -.5) to (2.2,1.5);
				\draw (-0.15,-1) node {$\dots$};
				\draw (-0.15,1) node {$\dots$};
				\draw[JW, fill=white] (-1.5,-.5) rectangle (1.5,.5);
				\node at (0,0) {\small $n-1$};
			\end{tikzpicture}
		\end{pmatrix}
	\end{gather*}
where the identities 
\[ r \circ s = \begin{pmatrix}
	\begin{tikzpicture}[anchorbase,scale=.3]
		\draw[usual] (1.3,-1.5) to (1.3,1.5);
		\draw[usual] (-1.3,-1.5) to (-1.3,1.5);
		\draw[JW, fill=white] (-1.5,-.5) rectangle (1.5,.5);
		\node at (0,0) {\small $n$};
		\draw (0.1,-1) node {$\dots$};
		\draw (0.1,1) node {$\dots$};
	\end{tikzpicture} & 0 \\
	0 & \begin{tikzpicture}[anchorbase,scale=.3]
		\draw[usual] (1.3,-1.5) to (1.3,1.5);
		\draw[usual] (-1.3,-1.5) to (-1.3,1.5);
		\draw[JW, fill=white] (-1.5,-.5) rectangle (1.5,.5);
		\node at (0,0) {\small $n-2$};
		\draw (0.1,-1) node {$\dots$};
		\draw (0.1,1) node {$\dots$};
	\end{tikzpicture}&
\end{pmatrix} = \id_{\karjw \oplus \karjw[n-2]} \quad \text{and} \quad s \circ r = \begin{tikzpicture}[anchorbase,scale=.33]
\draw[JW] (-1.5,0) rectangle (1.5,2);
\node at (0,.9) {$n-1$};
\draw[usual] (2,0) to (2,2);
\end{tikzpicture} = \id_{\karjw[n-1] \otimes 1} \]
follow directly from the properties of Jones--Wenzl projectors in \autoref{def:JW}. This shows that $\beta$ is precisely the projection on the $\karjw[n-2]$ summand and therefore restricts to an isomorphism (even the identity) between these two components. Therefore, we can apply Gaussian elimination as in \cite{barnatan2007fastkhovanovhomologycomputations} to contract them and finally obtain the desired statement.
\end{proof}

\begin{remark}
	Totalizations of Fibonacci cubes were already studied in \cite{dyckerhoff2023nsphericalfunctorscategorificationeulers} with connections to Jones--Wenzl projectors observed in \cite{Coulembier2024Nsphericalfunctorsandtensorcategories}. The full cubes of compositions play an analogous role for higher rank type A colored link homology and, relatedly, perverse schobers of Coxeter type A \cite{dyckerhoff2025perverseschoberscoxetertype}.
\end{remark}
\subsection{Extension to links}
\label{subsec:extension_to_links}

With these constructions in place, we are ready to cover the case of knots. More generally, we would like to extend them to links and work in that setting from now on: Let $L$ be a framed, oriented link with $r$ ordered components $K_1, \dots, K_r$. Given such a link, we can form its $\mathbf{n} = (n_1, \dots, n_r)$-cable $L^\mathbf{n}$ having $n_i$ parallel copies of $K_i$. 
For each of the components $K_i$, we orient the parallel copies in an alternating way as before and extend the braiding cobordisms defined for knots earlier to the link $L$ by locally braiding the parallel copies of $K_i$ and extend by the identity cobordism on the rest of $L^\mathbf{n}$. As a consequence, the braiding cobordisms corresponding to different components commute with each other. By functoriality, this gives rise to commuting actions of the braid groups $\mathrm{Br}_{n_i}$ on $\Kh(L^\mathbf{n})$. The same argument due to \cite{Grigsby_2017} as for knots shows that each of these actions factors through $S_{n_i}$, respectively, yielding in total an action of $S_{\mathbf{n}} := S_{n_1} \times \dots \times S_{n_r}$ on $\Kh(L^\mathbf{n})$. Similarly, $\Kh(L^\mathbf{n})$ is a representation of $\TL_\mathbf{n} := \TL_\mathbf{n}(2) := \TL_{n_1}(2) \times \dots \times \TL_{n_r}(2)$.

As in the case of knots, where we used this to define a functor $\Kh_K$ for a fixed framed, oriented knot $K$, we now define a functor $\Kh_L$ for a fixed framed, oriented link $L$ as above by
\begin{equation*} 
	\Kh_L^\commR\colon \prod_{i = 1}^{r} \TL \to \Mod^{\Z \times \Z} \\
\end{equation*}
that sends the object $\mathbf{n} = (n_1, \dots, n_r)$ to $\Kh(L^\mathbf{n})$ and acts on morphisms component-wise by cobordism maps similar to the ones constructed for $\Kh_K$.

Finally, we define the chain complexes $C_\mathbf{n}$ by
\[ C_\mathbf{n} := \bigotimes_{i = 1}^{r} (0,\dots,0, C_{n_i},0, \dots, 0) \] 
as a chain complex over $\prod_{i = 1}^{r} \Mat(\Kar(\TL)) =: \Mat(\Kar(\TL))^{\times r}$. Here, $C_{n_i}$ is the complex as in \autoref{cons:complex} in the $i$-th position and $0$ is the monoidal unit in $\TL$. Over a field of characteristic zero, there is a homotopy equivalence $ C_\mathbf{n}\simeq \prod_{i = 1}^{r} \karjw[n_i] =: \karjw[\mathbf{n}]$.
The homotopy equivalence data is given by applying \autoref{lem:hte} to every tensor factor separately. We will also use the dual complex $C_\mathbf{n}^*$ obtained by negating the homological degree and dualizing all chain objects and differentials using the duality in $\TL$.

Now we are ready to construct all versions of colored Khovanov homology that we want to consider: 

\begin{definition}
	\label{def:coloredHomology}
 Let $\commR$ be a commutative ring. We define the following variations of $\mathbf{n}$-colored Khovanov homology $\cKh{n}(L)$ of $L$ over $\commR$:
	\begin{center}
		\begin{tabular}{ r l}
			(\invlabel) & The $S_\mathbf{n}$ invariants $\Kh\left(L^\mathbf{n}\right)^{S_\mathbf{n}}$. \\ 
			(\coinvlabel) &  The $S_\mathbf{n}$ co-invariants $\Kh\left(L^\mathbf{n}\right)_{S_\mathbf{n}} = \Kh\left(L^\mathbf{n}\right)/ \linspan \{ \sigma.v - v \mid v\in \Kh\left(L^\mathbf{n}\right), \sigma \in S_\mathbf{n} \}$.  \\  
			(\cohomlabel) & The cohomology of $\Kh_L(C_\mathbf{n})$ for $C_\mathbf{n}$ as generalized to links from \autoref{cons:complex} above.\\
			(\homlabel) &  The cohomology $\Kh_L(C_\mathbf{n}^*)$ where $C_\mathbf{n}^*$ is the dual of $C_\mathbf{n}$.\\
			(\kerlabel) & The cohomology of $\Kh_L(C_\mathbf{n})$ in degree $0$, i.e.~the kernel of the first (non-zero) differential.  \\
			& This is a map $\Kh(L^\mathbf{n}) \to \bigoplus_{i = 1}^r \Kh(L^{\mathbf{n}-2\mathbf{e_i}})^{\oplus n_i-1}$ where $r$ is the number of link components  \\ 
			&of $L$ and $\mathbf{e_i}$ is the $i$-th $r$-component unit vector. Its components are given by the cobordism \\
			& maps of annuli as introduced earlier. \\
			(\cokerlabel) & The cohomology of $\Kh_L(C_\mathbf{n}^*)$ in degree $0$, i.e.~the cokernel of the last (non-zero) differential.  \\
			& This is a similar map  $\bigoplus_{i = 1}^r \Kh(L^{\mathbf{n}-2\mathbf{e_i}})^{\oplus n_i-1} \to \Kh(L^\mathbf{n})$ with components given by cobordism \\
			& maps of the same embedded annuli read in the opposite direction.
		\end{tabular}
	\end{center}
	For the following definitions, we work over a field $\F$ of characteristic zero:
	\begin{center}
		\begin{tabular}{ r l}
		(\imlabel) & The image $\Kh_L(\karjw[\mathbf{n}])$ of the Jones--Wenzl projector $\karjw[\mathbf{n}] \in \Kar(\TL)$ under the functor $\Kh_L$. \\ 
		& This can also be thought of as the image  $\im \Kh_L(p_\mathbf{n}) \subseteq \Kh\left( L^{\mathbf{n}}\right)$ of the endomorphism $ \Kh_L(p_\mathbf{n}) $ \\ 
		&if we consider the idempotent $p_\mathbf{n} \in \Hom_{ \left(\TL^\F \right)^{\times r}}(\mathbf{n},\mathbf{n})$ as mentioned earlier. \\
		(\coimlabel) &  The quotient $\Kh\left( L^{\mathbf{n}}\right)/\ker \Kh_L(p_\mathbf{n}) = \Kh\left( L^{\mathbf{n}}\right)/\im \Kh_K(\id - p_\mathbf{n})$  of $\Kh\left( L^{\mathbf{n}}\right)$ by the image \\ & of the complementary idempotent to $p_\mathbf{n}$, using the second interpretation of $p_\mathbf{n}$ above.   
		\end{tabular}
	\end{center}
	
\end{definition}

\begin{remark}
	A (probably non-complete) list of appearances of definitions above in the literature include:
	\begin{itemize}
		\item (\cohomlabel) is Khovanov's original definition of the categorification of the colored Jones polynomial from \cite{khovanov2005categorifications} over a field. The relations to (\homlabel), (\kerlabel), (\cokerlabel) and (\invlabel) are also mentioned and isomorphisms between them are conjectured when working over a field of characteristic zero.
		\item Beliakova--Wehrli \cite{Beliakova_Wehrli_2008} define the chain complexes $C_\mathbf{n}$ (denoted $\Gamma_\mathbf{n}$ there) and show that Khovanov's construction can be defined over the integers.
		\item The version (\coinvlabel) naturally appears in Manolescu--Neithalath's 2-handle formula for the skein lasagna modules of 4 manifolds presentable by attaching 2-handles to the 4-ball~\cite{manolescu2022skein}.  Note that they work over $\Z$ and not over a field of characteristic zero.
		\item  Grigsby--Licata--Wehrli \cite{Grigsby_2017} claim (\invlabel) $\cong$ (\cohomlabel) over a field of characteristic zero. Later, a proof of (\imlabel) $\cong$ (\cohomlabel) in terms of Chebyshev systems appeared in Beliakova--Hogancamp--Putyra--Wehrli \cite{beliakova2025unificationcoloredannularsl2}.
		\item Hogancamp--Rose--Wedrich \cite{Hogancamp2025-ol} argue that over a field of characteristic zero (\invlabel) $\cong$ (\cohomlabel) $\cong$ (\imlabel) for knots and show (\invlabel) $\cong$ (\coinvlabel) in the context of the 2-handle formula for skein lasagna modules. The proof given in this note is the generalization of their proof to links with some details filled in.
		
	\end{itemize}
\end{remark}

\begin{proposition}
	\label{prop:kerandinv}
		Let $\commR$ be a commutative ring where 2 is not a zero divisor. Then, (\kerlabel) $\cong$ (\invlabel) over $\commR$. Further, if $2$ is a unit in $\commR$, we additionally have (\cokerlabel) $\cong$ (\coinvlabel) over $\commR$.
\end{proposition}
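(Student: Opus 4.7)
The plan is to reduce the comparison to a single local identity
\[
s_i = \id - U_i \quad \text{in } \End\bigl(\Kh(L^\mathbf{n})\bigr),
\]
where $s_i$ is the action of a simple transposition in one of the factors $S_{n_j}$ of $S_\mathbf{n}$ (swapping two adjacent parallel copies of a component $K_j$ by a braiding cobordism) and $U_i = \mathrm{cup}_i \circ \mathrm{cap}_i$ is the annular cap-cup endomorphism at the same position. This identity is implicit in the proof from \cite{Grigsby_2017} that the braid group action on $\Kh(L^\mathbf{n})$ factors through the symmetric group: the relation $s_i^2 = \id$ is exactly $U_i^2 = 2 U_i$, which is the circle-evaluation at $d = 2$ in $\TL(2)$. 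Because braiding cobordisms on different components commute, it suffices to know this one identity for each simple transposition in each factor $S_{n_j}$ in order to compare the full $S_\mathbf{n}$-action with the system of annular cap/cup maps.

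For $(\kerlabel) \cong (\invlabel)$, an element $v \in \Kh(L^\mathbf{n})$ is $S_\mathbf{n}$-invariant if and only if $(s_i - \id)v = -U_i v = 0$ for every simple transposition $s_i$, i.e.\ $\mathrm{cup}_i(\mathrm{cap}_i(v)) = 0$ for all $i$. The torus-acts-as-$2$ relation gives $\mathrm{cap}_i \circ \mathrm{cup}_i = 2 \cdot \id$, so the hypothesis that $2$ is not a zero divisor makes $\mathrm{cup}_i$ injective, and the condition reduces to $\mathrm{cap}_i(v) = 0$ for all $i$. Intersecting over all $i$, the invariants $(\invlabel)$ coincide with the joint kernel of the annular cap maps, which is precisely the kernel of the first non-zero differential of $\Kh_L(C_\mathbf{n})$, i.e.\ $(\kerlabel)$.

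For $(\cokerlabel) \cong (\coinvlabel)$, the coinvariants are the quotient of $\Kh(L^\mathbf{n})$ by the subspace spanned by all $s_i v - v = -U_i v$, i.e.\ by $\sum_i \im(U_i) = \sum_i \im(\mathrm{cup}_i \circ \mathrm{cap}_i)$. The stronger assumption that $2$ is a unit upgrades $\mathrm{cap}_i \circ \mathrm{cup}_i = 2 \cdot \id$ to a splitting, so $\mathrm{cap}_i$ becomes surjective and $\im(\mathrm{cup}_i \circ \mathrm{cap}_i) = \im(\mathrm{cup}_i)$. The resulting sum is exactly the image of the last non-zero differential of $\Kh_L(C_\mathbf{n}^*)$, and passing to the quotient yields $(\cokerlabel)$.

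The main obstacle is verifying the key local identity $s_i = \id - U_i$ with correct signs under the alternating-orientation convention and the functoriality fix used in \cite{Grigsby_2017}; the footnote preceding the construction of $\Kh_L$ already flags that different sign conventions produce different signs for the circle parameter ($\pm 2$) and hence for $U_i$. Once this identity is settled, the remainder of the proof is purely formal manipulation of kernels and images, relying only on the (split) injectivity of $\mathrm{cup}_i$ and surjectivity of $\mathrm{cap}_i$ provided by the torus relation.
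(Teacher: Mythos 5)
Your proposal is correct and takes essentially the same route as the paper: the same key local identity (the braiding acts as $\id - U_i$, i.e.\ the Kauffman resolution of the crossing in $\TL(2)$) together with the circle/torus-equals-$2$ relation, followed by the same formal bookkeeping with the hypotheses on $2$. The only cosmetic differences are that for (\invlabel) $\subseteq$ (\kerlabel) the paper post-composes with the cap to get $2\cdot\mathrm{cap}=0$ on invariants where you invoke injectivity of the cup, and for (\cokerlabel) $\cong$ (\coinvlabel) the paper sandwiches $2U \subseteq S \subseteq U$ where you use split surjectivity of the cap---equivalent uses of the same assumptions.
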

\begin{proof}
	We prove both statements separately.
	\paragraph{(\kerlabel) $\cong$ (\invlabel):}
	We proof the statement for $n$-cables of a knot $K$. The general case for multi-component links follows by taking intersections over the corresponding subspaces of $\Kh(L^\mathbf{n})$ for the (commuting) actions on the cables of each link component of $L$.
	
	Recall that the crossing in $\TL$ of the $i+1$-th and $i+2$-th strand is given by
		\begin{equation} 
			\label{eq:tlcrossingati}
			\tlcrossingids{i}{} = \id - \tldensecupcapids{i}{}.
		\end{equation} Therefore, for $v \in \Kh(K^n)$, we have the implication 
		\[ \Kh_K\left(\tlcapids{i}{}\right)(v) = 0 \implies \Kh_K\left(\tlcrossingids{i}{}\right)(v) = v \] which implies (\kerlabel) $\subseteq$ (\invlabel). To show the opposite inclusion, we post-compose equation \eqref{eq:tlcrossingati}  with $\tlcapids{i}{}$ to obtain
	\[ \begin{tikzpicture}[anchorbase,scale=0.15, tinynodes]
			\draw[usual] (-1.5,-1) to (-1.5,0.3) node[left,xshift=2pt]{$i$} to (-1.5,2.5);
			\draw[usual] (-1,-1) to [out=90,in=270] (1,1) to [out=90,in=0] (0,1.75) to [out=180,in=90](-1,1);
			\draw[usual]  (1,-1)  to  [out=90,in=270] (-1,1);
			\draw[usual] (1.5,-1) to (1.5,2.5);
	\end{tikzpicture}
	\;\;
	=
	- \
	\begin{tikzpicture}[anchorbase, scale=0.15,tinynodes]
		\draw[usual] (-1.5,0) to (-1.5,0.93) node[left,xshift=2pt]{$i$} to (-1.5,2.5);
		\draw[usual] (-1,0) to[out = 90, in = 180] (0,1) to [out = 0, in = 90]  (1,0);
		\draw[usual] (1.5,0)  to (1.5,2.5);
	\end{tikzpicture}
	 \]
	in general, but on $S_n$-invariants, $\Kh_K\left( \tlcrossingids{i}{} \right)$ acts trivially and therefore both $\Kh_K \left( \tlcapids{i}{} \right)$ and $-\Kh_K\left(\tlcapids{i}{}\right)$ act by the same morphism $\Kh(K^n)^{S_n} \to \commR$. If $2$ is not a zero divisor in $\commR$, this must be the zero morphism.
	\paragraph{(\cokerlabel) $\cong$ (\coinvlabel):}
	Similar to the previous case, we argue locally for an $n$-cable of a link component $K$ and compare the spaces by which we take the quotient. The general statement follows by taking the respective inner sums of these subspaces for each link component.
	
	Consider the subspaces of $\Kh\left(K^n\right)$
	\[
		S =  \linspan \{ \sigma.v - v \mid v\in \Kh\left(K^n\right), \sigma \in S_n \} \quad \text{and} \quad
		U = \sum_{i+j = n-2} \im \Kh_K \left(\tlcupids{i}{j}\right) 
	\]
	and note that it suffices to restrict to generators (i.e.\ crossings in all positions) in the definition of $S$. Simplifying then yields
	\[ S = \sum_{i+j = n-2} \im \Kh_K \left(\tlcupcapids{i}{j}\right) \subseteq U. \]
	
	For the other inclusion, we compute
	\[ 2\cdot U = \sum_{i+j = n-2} \im 2 \Kh_K \left(\tlcupids{i}{j}\right) = \sum_{i+j = n-2} \im \Kh_K \left( \begin{tikzpicture}[anchorbase, tinynodes, scale=0.15]
	\draw[usual] (-1.5,-3.5) to (-1.5,-1.5) node[left,xshift=2pt]{$i$} to (-1.5,0.5);
	\draw[usual] (-1,0.5) to[out = 270, in = 180] (0,-0.5) to[out = 0, in = 270] (1,0.5);
	\draw[usual] (-1,-2.5) to[out = 90, in = 180] (0,-1.5) to[out = 0, in =90] (1,-2.5) to[out=270,in = 0] (0,-3.5) to[out=180, in=270] (-1,-2.5);
	\draw[usual] (1.5,-3.5) to (1.5,-1.5) node[right,xshift=-2pt]{$j$} to (1.5,0.5);
	\end{tikzpicture} \right)  \subseteq S \subseteq U \] where the second equality follows from the relation in $\TL$ and the first inclusion by pre-composing the maps in the respective summand with $\Kh_K(\tlcupids{i}{j})$. In particular, if $2$ is a unit in $\commR$, all of these agree and so do the quotients $\Kh(K^n)/S \cong \Kh(K^n)/U$.
\end{proof}

\subsection{Equivalence over a field of characteristic zero}
\label{subsec:proof}

We now want to prove the following proposition which is the extension of \autoref{prop:main} and \autoref{thm:introcableformula} to links: 
\begin{proposition}
	\label{prop:definitionsEquivalent}
	Let $\F$ be a field of characteristic zero. Then we have:
	\begin{enumerate}
		\item All versions of $\mathbf{n}$-colored Khovanov homology $\cKh{\mathbf{n}}(L)$ over $\F$ of a framed, oriented link $L$ in \autoref{def:coloredHomology} yield isomorphic bigraded vector spaces.
		\item 	The 2-variable Poincaré polynomial of $\mathbf{n}$-colored Khovanov homology can be computed from the Poincaré polynomials of the Khovanov homology of the cables of $L$ via: 
		\begin{equation}
			\label{eq:poincarecables}
			\Pol \left(\cKh{\mathbf{n}}(L)\right) = \sum_{\mathbf{k} = \mathbf{0}}^{\lfloor \frac{\mathbf{n}}{2} \rfloor} (-1)^{\abs{\mathbf{k}}} \binom{\mathbf{n}-\mathbf{k}}{\mathbf{k}} \Pol \left( \Kh \left( L^{\mathbf{n}-2\mathbf{k}} \right) \right) (t,q) \in \N_0[t, t^{-1}, q, q^{-1}] 
		\end{equation}
		where $\mathbf{k} = (k_1, \dots, k_r)$, $\abs{\mathbf{k}} = k_1 + \dots + k_r$ and
		\[ \binom{\mathbf{n}-\mathbf{k}}{\mathbf{k}} = \prod_{i=0}^{r} \binom{n_i - k_i}{k_i}. \]
	\end{enumerate}
\end{proposition}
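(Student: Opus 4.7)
The plan is to leverage the homotopy equivalence $C_\mathbf{n} \simeq p_\mathbf{n}$ noted in \autoref{subsec:extension_to_links} (itself an application of \autoref{lem:hte} factor-by-factor in the product category $\Mat(\Kar(\TL))^{\times r}$) by pushing it through the additive functor $\Kh_L$. Since additive functors preserve homotopy equivalences of bounded complexes, this yields $\Kh_L(C_\mathbf{n}) \simeq \Kh_L(p_\mathbf{n})$ as complexes of finite-dimensional bigraded $\F$-vector spaces, with the right-hand side concentrated in homological degree $0$. The entire proof of part (1) will unspool from this single equivalence.

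For part (1), I would first dispatch (\imlabel) $\cong$ (\coimlabel) by a purely algebraic argument: the idempotent $\Kh_L(p_\mathbf{n}) \in \End(\Kh(L^\mathbf{n}))$ splits $\Kh(L^\mathbf{n}) = \im \Kh_L(p_\mathbf{n}) \oplus \ker \Kh_L(p_\mathbf{n})$, so the quotient by the kernel is canonically isomorphic to the image. Next, the homotopy equivalence above says that the cohomology of $\Kh_L(C_\mathbf{n})$ is concentrated in degree $0$ and equal to $\Kh_L(p_\mathbf{n}) = $ (\imlabel), giving (\cohomlabel) $\cong$ (\imlabel) and a fortiori (\kerlabel) $\cong$ (\imlabel) since (\kerlabel) is by definition the degree-$0$ piece of the (concentrated) cohomology. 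Applying the analogous argument to the dual complex $C_\mathbf{n}^*$, using the self-duality $p_\mathbf{n}^* = p_\mathbf{n}$ of Jones--Wenzl projectors in $\TL$, yields (\homlabel) $\cong$ (\cokerlabel) $\cong$ (\imlabel). Finally, since $2$ is a unit in $\F$, \autoref{prop:kerandinv} supplies (\kerlabel) $\cong$ (\invlabel) and (\cokerlabel) $\cong$ (\coinvlabel), closing the loop among all eight versions.

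For part (2), the key input is the standard fact that for a bounded complex of finite-dimensional bigraded vector spaces whose cohomology is concentrated in a single homological degree, the Poincaré polynomial of the cohomology equals the alternating sum of Poincaré polynomials of the chain groups. Applying this to $\Kh_L(C_\mathbf{n})$ and reading off the chain groups from \autoref{cons:complex} extended to links via \autoref{subsec:extension_to_links}, the component in homological degree $d$ is $\bigoplus_{|\mathbf{k}| = d} \Kh(L^{\mathbf{n} - 2\mathbf{k}})^{\oplus \binom{\mathbf{n} - \mathbf{k}}{\mathbf{k}}}$, so the resulting alternating sum reorganizes directly into \eqref{eq:poincarecables}.

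The main subtlety to confirm along the way is that the differentials of $\Kh_L(C_\mathbf{n})$ preserve the inner bigrading of Khovanov homology (while raising the outer homological degree by one), so that the Euler-characteristic step in part (2) is a genuine identity of Laurent polynomials in $t, q$ rather than a mere cancellation argument. This follows from the way $\Kh_L$ was set up in \autoref{subsec:definitions}---the cap and cup cobordisms underlying the differentials are bigrading-preserving in the chosen conventions---and once verified, the remainder of the argument is purely formal, relying only on additivity of $\Kh_L$ and the bounded-complex Euler characteristic.
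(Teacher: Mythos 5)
Your proposal is correct and follows the same core strategy as the paper: push the homotopy equivalence $C_\mathbf{n}\simeq p_\mathbf{n}$ from \autoref{lem:hte} through the additive functor $\Kh_L$, use self-duality of $p_\mathbf{n}$ for the dual complex, split the idempotent for (\imlabel) $\cong$ (\coimlabel), and get part (2) from the bigraded Euler characteristic of $\Kh_L(C_\mathbf{n})$, whose homological degree $d$ chain group is exactly $\bigoplus_{\abs{\mathbf{k}}=d}\Kh(L^{\mathbf{n}-2\mathbf{k}})^{\oplus\binom{\mathbf{n}-\mathbf{k}}{\mathbf{k}}}$. The only divergence is how the symmetric-group versions enter the chain: you invoke \autoref{prop:kerandinv} (valid, since $2$ is a unit in $\F$) to get (\kerlabel) $\cong$ (\invlabel) and (\cokerlabel) $\cong$ (\coinvlabel), whereas the paper instead proves (\invlabel) $\cong$ (\coinvlabel) directly by the norm (averaging) map and (\invlabel) $\cong$ (\imlabel) by observing that the Jones--Wenzl projector is the symmetrizing idempotent for the $S_\mathbf{n}$-action; both routes are available in the paper and close the same loop, yours leaning a bit more on the integral statement of \autoref{prop:kerandinv}, the paper's giving the slightly stronger observation that (\invlabel) and (\imlabel) coincide as subspaces of $\Kh(L^\mathbf{n})$. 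Your final remark that the cap/cup cobordism maps preserve the internal bigrading (annuli have Euler characteristic zero) is exactly the point the paper uses implicitly when treating $\Kh_L(C_\mathbf{n})$ as a triply-graded complex.
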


\begin{remark}
	For the database \cite{coloredKh}, we (often) compute this Poincaré polynomial from the free part over $\Z$. By the universal coefficient theorem, this yields the same polynomial which therefore also does not depend on $\F$.
\end{remark}

\begin{remark}
	The coefficients in \eqref{eq:poincarecables} are governed by the Chebyshev polynomials of the second kind. Since those form a basis of the polynomial ring $\Z[X]$, the relation \eqref{eq:poincarecables} can be inverted to express the Poincaré polynomials of cables in terms of the Poincaré polynomials of colored homologies. For knots one obtains the formula in \autoref{cor:inverse} and for links a multilinear generalization.
\end{remark}

In order to phrase the next lemma, we will need the category $\text{Link}_{B^3}$. Objects are given by framed, oriented\footnote{Sometimes, similar categories also appear without the additional framing and orientation, but the framed and oriented setting is the most natural one for our purposes.} links smoothly embedded in the 3-ball $B^3$ and the morphisms $L_1 \to L_2$ are isotopy classes of framed, oriented compact surfaces $\Sigma$ smoothly embedded in $B^3 \times [0,1]$ connecting $L_1$ in $B^3\times \{0\}$ to $L_2$ in $B^3\times \{1\}$. Further, embedding 3-balls inside a larger 3-ball induces a symmetric monoidal\footnote{Strictly speaking: $\mathbb{E}_3$-monoidal.} structure on $\text{Link}_{B^3}$, which is rigid. The dual of a link is given by the mirror reverse $L^!$\footnote{Obtained on the level of diagrams by reflecting in the $y$-axis and reversing orientations.} and the counit and unit of the duality are given by the two ways to bend a cylinder $L \times [0,1]$ into embedded link cobordisms $\mathrm{ev}\colon L \sqcup L^! \to \emptyset$ and $\mathrm{coev}\colon \emptyset \to L^! \sqcup L$.

We want to factorize the cable functor $\Kh_L\colon \TL \to \Vect^{\Z \times \Z}$ as the composite of some geometric procedure $\cable_{L}$ and Khovanov homology. For this, we consider the quotient $\commR\mathrm{Link}_{/t}$ of the $\commR$-linearization of $\mathrm{Link}_{B^3}$ by the relation that every embedded torus evaluates to two. This relations ensures that for every framed, oriented link $L$ with $r$ ordered components, we have a (non-monoidal!) functor $\mathrm{cable}_L: \TL^{\times r} \to \commR\mathrm{Link}_{/t}$ that sends $\mathbf{n}$ to the $\mathbf{n}$-cable of $L$ and morphisms to the corresponding embedded annuli as described earlier. The (classes of the) same morphisms $\mathrm{ev}_L$ and $\mathrm{coev}_L$ as before exhibit $\commR\mathrm{Link}_{/t}$ as a rigid monoidal category, but those are not the images of the respective maps under the cabling functors since these are not monoidal.

\begin{remark}
	Instead of working with $\commR\mathrm{Link}_{/t}$, we could have extended the source of $\cable_{L}$ to $\commR$-linear combinations of crossingless tangle diagrams without the relation $\bigcirc = 2$, lift the complexes $C_\mathbf{n}$ to this setting, prove the next proposition and only pass to the quotient $\TL$ at the very end.
\end{remark}

\begin{lemma}
	\label{lem:mirrorandduality}
	Let $L$ be a framed, oriented link with $r$ ordered components. We have a natural isomorphism of functors
	\[ (-)^*_{\commR\mathrm{Link}_{/t}} \circ \cable_{L} \Longrightarrow \cable_{L^!}^{\mathrm{op}} \circ (-)^*_{\TL} \circ (-)^\dynkin_{\TL} \]
	with components given by isotopies of embeddings of $(L^!)^\mathbf{n}$. 
\end{lemma}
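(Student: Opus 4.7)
The plan is to construct the natural isomorphism first on objects via an isotopy class of oriented, framed embeddings and then verify naturality on a generating set of morphisms of $\prod_{i=1}^{r}\TL$. Since $\cable_{L^!}$, $(-)^\dynkin_{\TL}$, and $(-)^*_{\TL}$ all act trivially on objects, while $(-)^*_{\commR\mathrm{Link}_{/t}}$ sends $L^\mathbf{n}$ to its mirror reverse $(L^\mathbf{n})^!$, the component at each $\mathbf{n}$ must be a canonical isotopy $(L^\mathbf{n})^! \cong (L^!)^\mathbf{n}$ that preserves the labeling of parallel strands (i.e., matches the $i$-th parallel copy on each side). Such an isotopy exists for purely geometric reasons: mirror-reversal of a cable of $L$ is a cable of the mirror $L^!$, and reversing all orientations sends the alternating pattern starting from $L$'s orientation to the alternating pattern starting from $L^!$'s orientation, which is exactly the convention used to build $(L^!)^\mathbf{n}$.

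For naturality, by monoidal generation it suffices to check the squares for caps $\tlcapids{k}{\ell}$ and cups $\tlcupids{k}{\ell}$ (together with identities) at each of the $r$ factors. Because all the functors involved respect the product decomposition, I work one factor at a time. Consider a cap $\tlcapids{k}{\ell}$. Under $\cable_L$ it becomes an annulus cobordism merging the $(k+1)$-th and $(k+2)$-th parallel copies in the cable; applying $(-)^*_{\commR\mathrm{Link}_{/t}}$ reverses time and mirrors, producing a cup-annulus joining the $(k+1)$-th and $(k+2)$-th copies in the mirror cable, once the chosen component isotopy is inserted at both boundaries. On the $\TL$ side, $(\tlcapids{k}{\ell})^\dynkin = \tlcapids{\ell}{k}$, and its $\TL$-dual is the cup $\tlcupids{k}{\ell}$; applying $\cable_{L^!}$ to this cup yields precisely the same cobordism. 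The key observation is that $(-)^\dynkin$ compensates for the left-right swap inherent in the $180^{\circ}$ rotation of $(-)^*_{\TL}$, so that the net effect of $(-)^*_{\TL}\circ(-)^\dynkin_{\TL}$ is to "turn caps into cups without shifting their position". The case of a cup is entirely symmetric.

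To extend from generators to all morphisms, I use that $\cable_{L^!}$, $(-)^\dynkin_{\TL}$, $(-)^*_{\TL}$, and $(-)^*_{\commR\mathrm{Link}_{/t}}$ are all compatible with composition, and that the torus relation $\bigcirc = 2$ in $\commR\mathrm{Link}_{/t}$ is manifestly preserved under mirror reversal, so no additional relations require separate checking. The main obstacle, as is typical for this kind of diagrammatic comparison, lies in the careful bookkeeping of three simultaneous effects of mirror reversal: (i) global reversal of orientations on all components, (ii) interchange of caps and cups as cobordisms under time-reversal, and (iii) the left-right swap of strands in cable positions. The composite $(-)^*_{\TL}\circ(-)^\dynkin_{\TL}$ on the $\TL$-side is exactly the gadget that encodes these three effects at once, so once the component of the natural transformation is pinned down at each $\mathbf{n}$ and the orientation conventions are laid out, each naturality square reduces to a short explicit ambient isotopy in $B^3 \times [0,1]$.
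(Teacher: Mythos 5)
Your proposal is correct and follows essentially the same route as the paper: the components are the canonical isotopies $(L^\mathbf{n})^! \cong (L^!)^\mathbf{n}$, and naturality is checked on the generating caps and cups by tracking positions of created/annihilated copies relative to the framing, with the composite $(-)^*_{\TL}\circ(-)^\dynkin_{\TL}$ (reflection in a horizontal line) exactly matching the fact that the geometric duality preserves framing-relative positions. Your key assertion that the dual of the cap-annulus at positions $k+1,k+2$ is the cup-annulus at the same framing-relative positions is precisely the ``crucial detail'' the paper verifies with its pictures.
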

\begin{proof}
	First, note that the composition $ (-)^*_{\TL} \circ (-)^\dynkin_{\TL}$ corresponds to the reflection of Temperley--Lieb diagrams in a horizontal line. Naturality then follows by careful inspection of the resulting surfaces: Taking the position relative to the framing of components created by cups or annihilated by caps into account, left- and right-hand side give rise to isotopic cobordisms. We illustrate the action on morphisms for the example of \begin{tikzpicture}[scale=0.3, anchorbase]
		\draw[thick,line cap=round] (-0.5, -1) to[out=90, in = 180] (-0.25, -0.75) to[out=0, in = 90] (0,-1);
		\draw[thick,line cap=round] (0.5, -1) to[out=90, in = 270] (0,0);
	\end{tikzpicture}:
	
	Locally, in the neighborhood of a component knot $K$, the left-hand side produces a cobordism $K^! \to \left(K^! \right)^3$ creating two new components on the right. We indicate creation/annihilation of components by red/green rectangles and record the framing with a blue arrow on the first component (as seen by the framing) to obtain the following sequence of morphisms.
	\begin{gather*}
		\begin{tikzpicture}[scale=1, anchorbase]
			\draw[thick,line cap=round] (-0.5, -1) to[out=90, in = 180] (-0.25, -0.75) to[out=0, in = 90] (0,-1);
			\draw[thick,line cap=round] (0.5, -1) to[out=90, in = 270] (0,0);
		\end{tikzpicture}
		\xmapsto{\cable_K}
		\begin{tikzpicture}[scale=0.5, anchorbase]
			\fill[red!10] (-1,-1) rectangle (0,1);
			\draw[->] (-1,-1) -- (-1,1);
			\draw[->] (0,1) -- (0,-1);
			\draw[->] (1,-1) -- (1,1);
			\draw[->, blue] (-1,-0.5) -- (-0.5,-0.5);
		\end{tikzpicture}
		\xmapsto{(-)^*}
		\begin{tikzpicture}[scale=0.5, anchorbase]
			\fill[green!10] (0,-1) rectangle (1,1);
			\draw[->] (-1,1) -- (-1,-1);
			\draw[->] (0,-1) -- (0,1);
			\draw[->] (1,1) -- (1,-1);
			\draw[->, blue] (1,0.5) -- (0.5,0.5);
		\end{tikzpicture}
	\end{gather*}
	Similarly, for the right-hand side, the composition yields the following sequence of morphisms. The crucial detail here is that the new components have to be inserted \emph{before} the existing component (as seen by the framing).
	\begin{gather*}
		\begin{tikzpicture}[scale=1, anchorbase]
			\draw[thick,line cap=round] (-0.5, -1) to[out=90, in = 180] (-0.25, -0.75) to[out=0, in = 90] (0,-1);
			\draw[thick,line cap=round] (0.5, -1) to[out=90, in = 270] (0,0);
		\end{tikzpicture}
		\xmapsto{(-)^\dynkin}
		\begin{tikzpicture}[scale=1, anchorbase]
			\draw[thick,line cap=round] (0, -1) to[out=90, in = 180] (0.25, -0.75) to[out=0, in = 90] (0.5,-1);
			\draw[thick,line cap=round] (-0.5, -1) to[out=90, in = 270] (0,0);
		\end{tikzpicture}
		\xmapsto{(-)^*}
		\begin{tikzpicture}[scale=1, anchorbase]
			\draw[thick,line cap=round] (-0.5, 1) to[out=270, in = 180] (-0.25, 0.75) to[out=0, in = 270] (0,1);
			\draw[thick,line cap=round] (0.5, 1) to[out=270, in = 90] (0,0);
		\end{tikzpicture}
		\xmapsto{\cable_{K^!}}
		\begin{tikzpicture}[scale=0.5, anchorbase]
			\fill[green!10] (0,-1) rectangle (1,1);
			\draw[->] (-1,1) -- (-1,-1);
			\draw[->] (0,-1) -- (0,1);
			\draw[->] (1,1) -- (1,-1);
			\draw[->, blue] (1,0.5) -- (0.5,0.5);
		\end{tikzpicture}
	\end{gather*}
\end{proof}

\begin{corollary}
	\label{cor:KHmirrorandduality}
	Let $L$ be a framed, oriented link with $r$ ordered components and $C_{\mathbf{n}}$ as above. Over a field, we have an isomorphism:
	\[ \Kh_L(C_\mathbf{n})^* \cong \Kh_{L^!}(C_\mathbf{n}^*) \]
	
\end{corollary}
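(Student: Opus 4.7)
The plan is to combine two facts. The first is \autoref{lem:mirrorandduality}, which already does the geometric work of identifying the mirror of a cable with the cable of the mirror (up to the reflection automorphism $(-)^\dynkin$). The second is the standard observation that, over a field $\F$, the Khovanov functor $\Kh \colon \commR\mathrm{Link}_{/t} \to \Vect^{\Z \times \Z}$ intertwines duality in the source with $\F$-linear duality in the target: there is a natural isomorphism $\Kh(L^!) \cong \Kh(L)^{*}$ under which $\Kh(\Sigma^!) \cong \Kh(\Sigma)^{*}$ for every cobordism $\Sigma \colon L_1 \to L_2$. This is immediate from the fact that the Khovanov cochain complex of $L^!$ is $\F$-linearly dual to that of $L$ (with reversed differentials), after which the universal coefficient theorem passes the identification to cohomology.

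Granting these ingredients, the argument proceeds in three short steps. First, apply the Khovanov-duality fact term- and morphism-wise to $\cable_L(C_\mathbf{n})$, regarded as a chain complex in $\commR\mathrm{Link}_{/t}$; this yields an isomorphism of chain complexes of $\F$-vector spaces
\[
\Kh_L(C_\mathbf{n})^{*} \;=\; \Kh\!\bigl(\cable_L(C_\mathbf{n})\bigr)^{*} \;\cong\; \Kh\!\bigl((\cable_L(C_\mathbf{n}))^{*}\bigr),
\]
where the inner $(-)^{*}$ is taken in $\commR\mathrm{Link}_{/t}$. Second, apply \autoref{lem:mirrorandduality} levelwise to identify $(\cable_L(C_\mathbf{n}))^{*}$ with $\cable_{L^!}(C_\mathbf{n}^{\dynkin *})$ as chain complexes in $\commR\mathrm{Link}_{/t}$. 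Third, invoke the final observation of \autoref{cons:complex}, which provides an isomorphism $C_\mathbf{n}^{\dynkin} \cong C_\mathbf{n}$ in $\Mat(\Kar(\TL))^{\times r}$ via signed reordering of components; dualizing yields $C_\mathbf{n}^{\dynkin *} \cong C_\mathbf{n}^{*}$. Composing the three isomorphisms produces the claimed $\Kh_L(C_\mathbf{n})^{*} \cong \Kh_{L^!}(C_\mathbf{n}^{*})$.

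The main piece of bookkeeping concerns signs: the placing-signs-around-squares fix in the differential of $C_\mathbf{n}$, the sign choices in the functoriality fix of Khovanov homology used to build the $(-)^!$ identification, and the signed reordering in $C_\mathbf{n}^{\dynkin} \cong C_\mathbf{n}$ all need to combine consistently across these three steps. Each is already packaged into the cited construction or lemma, so what results is a genuine isomorphism of chain complexes (and in particular of their cohomologies) rather than merely of the underlying graded objects.
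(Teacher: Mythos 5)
Your proposal is correct and takes essentially the same route as the paper: both arguments combine \autoref{lem:mirrorandduality} extended levelwise to chain complexes, the isomorphism $C_\mathbf{n} \cong C_\mathbf{n}^{\dynkin}$ from \autoref{cons:complex}, and the fact that the monoidal functor $\Kh$ sends the dual (mirror reverse) in $\commR\mathrm{Link}_{/t}$ to linear duality of bigraded vector spaces, merely composing these ingredients in a slightly different order.
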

\begin{proof}
	
	The natural isomorphism from \autoref{lem:mirrorandduality} extends to the additive closure $\Mat( \Z\mathrm{Link}_{/t})$ and chain complexes over it. Then, the isomorphism $C_\mathbf{n} \cong (C_\mathbf{n})^{\leftrightarrow^{\times r}}$ yields $\cable_{L}(C_\mathbf{n})^* \cong \cable_{L^!}(C_\mathbf{n}^*)$.
	Finally, the monoidal functor $\Kh$ preserves duality and we obtain the claim after rewriting $\Kh_L = \Kh \circ \ \cable_{L}$.
\end{proof}

Now, we will start the proof of \autoref{prop:definitionsEquivalent}. Note that the versions in \autoref{def:coloredHomology} come in pairs of dual concepts---invariants and co-invariants, cochain complex and its dual complex, kernel and cokernel, image and coimage. As a first step, we will investigate the relationships within these pairs.

\begin{proposition}
	\label{prop:dualities}
	In the setting of \autoref{def:coloredHomology} when working over a field of characteristic zero, we have the following isomorphisms: (\invlabel) $\cong$ (\coinvlabel), (\kerlabel) $\cong$ (\cohomlabel) $\cong$ (\imlabel) $\cong$ (\homlabel) $\cong$ (\cokerlabel) and (\imlabel) $\cong$ (\coimlabel).
\end{proposition}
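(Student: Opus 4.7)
The plan is to handle the three asserted chains separately: the short pairs (\invlabel) $\cong$ (\coinvlabel) and (\imlabel) $\cong$ (\coimlabel) are pure linear algebra, while the five-term chain (\kerlabel) $\cong$ (\cohomlabel) $\cong$ (\imlabel) $\cong$ (\homlabel) $\cong$ (\cokerlabel) will be obtained by transporting the homotopy equivalence of \autoref{lem:hte} through the functor $\Kh_L$.

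For (\invlabel) $\cong$ (\coinvlabel), I would invoke Maschke's theorem: because $\F$ has characteristic zero, the Reynolds idempotent $e := \tfrac{1}{|S_\mathbf{n}|}\sum_{\sigma \in S_\mathbf{n}} \sigma$ is a bigrading-preserving endomorphism of $\Kh(L^\mathbf{n})$ with $\im(e) = \Kh(L^\mathbf{n})^{S_\mathbf{n}}$ and $\ker(e) = \linspan\{\sigma.v - v \mid v \in \Kh(L^\mathbf{n}),\ \sigma \in S_\mathbf{n}\}$, the latter identification being the standard characteristic-zero computation $v = \tfrac{1}{|S_\mathbf{n}|}\sum_\sigma (v - \sigma v)$ whenever $e(v) = 0$. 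Hence $\Kh(L^\mathbf{n})$ splits as the direct sum of these two subspaces and the natural composition $\Kh(L^\mathbf{n})^{S_\mathbf{n}} \hookrightarrow \Kh(L^\mathbf{n}) \twoheadrightarrow \Kh(L^\mathbf{n})_{S_\mathbf{n}}$ is an isomorphism. For (\imlabel) $\cong$ (\coimlabel), an analogous but simpler observation suffices: $\Kh_L(p_\mathbf{n})$ is an idempotent endomorphism, so $\Kh(L^\mathbf{n}) = \im \Kh_L(p_\mathbf{n}) \oplus \ker \Kh_L(p_\mathbf{n})$ and the resulting projection identifies $\Kh(L^\mathbf{n})/\ker \Kh_L(p_\mathbf{n})$ with $\im \Kh_L(p_\mathbf{n})$.

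For the five-term chain, the main input is the homotopy equivalence $C_\mathbf{n} \simeq p_\mathbf{n}$ of bounded chain complexes over $\Mat(\Kar(\TL))^{\times r}$ provided by \autoref{lem:hte} (applied in each tensor factor), with $p_\mathbf{n}$ sitting in homological degree zero. Since $\Kh_L$ is additive, applying it preserves this homotopy equivalence to produce $\Kh_L(C_\mathbf{n}) \simeq \Kh_L(p_\mathbf{n})$ as chain complexes of bigraded vector spaces. Consequently the total cohomology of $\Kh_L(C_\mathbf{n})$ is concentrated in degree zero and equal to $\Kh_L(p_\mathbf{n})$; the kernel of the first non-zero differential therefore coincides with $H^0$ and with the whole cohomology, yielding (\kerlabel) $\cong$ (\cohomlabel) $\cong$ (\imlabel). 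For the dual side I would dualize the homotopy equivalence within the chain complex category over $\TL^{\times r}$, using that the Jones--Wenzl projector is self-dual: its $180^\circ$-rotated diagram satisfies the three characterizing properties (idempotency, annihilation of cups and caps, unit coefficient of the identity), so by uniqueness $p_n^* = p_n$. This gives $C_\mathbf{n}^* \simeq p_\mathbf{n}$ concentrated in degree zero, and rerunning the same argument after applying $\Kh_L$ produces (\cokerlabel) $\cong$ (\homlabel) $\cong$ (\imlabel).

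The only step requiring a small argument beyond bookkeeping is the self-duality $p_n^* = p_n$, which lets the dual complex $C_\mathbf{n}^*$ inherit the same degree-zero limit as $C_\mathbf{n}$ without having to invoke \autoref{cor:KHmirrorandduality} and replace the underlying link by its mirror $L^!$. Once this is in place, all five terms of the long chain collapse to $\Kh_L(p_\mathbf{n})$ for purely formal reasons, and the two short chains are handled by the characteristic-zero splittings described above.
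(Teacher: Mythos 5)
Your proposal is correct and follows essentially the same route as the paper: the Reynolds/norm idempotent for (\invlabel) $\cong$ (\coinvlabel), the standard splitting of an idempotent for (\imlabel) $\cong$ (\coimlabel), and transporting the homotopy equivalence of \autoref{lem:hte} through the additive functor $\Kh_L$, combined with self-duality of the Jones--Wenzl projector (the paper phrases this via reflection symmetry of $p_n$, you via the uniqueness characterization), to collapse the five-term chain onto $\Kh_L(p_\mathbf{n})$ in degree zero. The paper's proof likewise does not need \autoref{cor:KHmirrorandduality} here, so your shortcut matches its argument.
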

\begin{proof}
	We go through the definitions separately.
	\paragraph{(\invlabel) $\cong$ (\coinvlabel):}
		More generally, for any finite group $G$ and any finite-dimensional $G$-representation $M$ over $\F$ the co-invariants $M_G$ and the invariants $M^G$ are isomorphic via the norm map
		\begin{align*}
		N\colon M_G &\to M^G \\
		[m] &\mapsto \frac{1}{|G|} \sum_{g\in G}gm 
		\end{align*}
		which averages over the group. The map in the other direction is given by the canonical projection $M \to M_G$.
		\paragraph{(\kerlabel) $\cong$ (\cohomlabel) $\cong$ (\imlabel) $\cong$ (\homlabel) $\cong$ (\cokerlabel):}		
		Over a field of characteristic zero, we can apply \autoref{lem:hte} and use that the functor $\Kh_L$ preserves homotopy equivalences as noted earlier to see that $\Kh_L(C_\mathbf{n})$ is homotopy equivalent to the one-term complex having $\Kh_L(\karjw[\mathbf{n}])$ in degree zero. Since every object in $\TL$ is self-dual and the Jones--Wenzl projectors $p_n$ are symmetric under both vertical and horizontal reflection, the object $\karjw \in \Kar(\TL)$ as well as the chain complex having $\karjw$ in degree zero are self-dual, too. In particular, $C_\mathbf{n} \simeq \karjw \cong \karjw^* \simeq C_\mathbf{n}^*$. Again, $\Kh_L$ preserves this homotopy equivalence and therefore, both $\Kh_L(C_\mathbf{n})$ and $\Kh_L(C_\mathbf{n}^*)$ are homotopy equivalent to the complex having $\Kh_L(\karjw[\mathbf{n}])$ in homological degree zero. Forgetting about the trivial homological degree yields the desired isomorphisms since cohomology is concentrated in degree zero.

		\paragraph{(\imlabel) $\cong$ (\coimlabel):}
		This is an immediate consequence of the standard isomorphism $V/\ker f \cong \im f$ for a linear map $f\colon V \to W$ between vector spaces when applied to the projection $p_n$. Interestingly, this argument even holds when working over a commutative ring $\commR$, but we need a field of characteristic zero to have all the Jones--Wenzl projectors $p_n$.\qedhere 
\end{proof}

\begin{remark}
	Understanding the consequences of the missing of Jones--Wenzl projectors over a field of characteristic $p$ is an intricate question of its own. Still, the proof above holds true if the $n$-th Jones--Wenzl projector $p_n$ can be defined over a field $\K$ of characteristic $p$. This is the case when $n < p$ or (miraculously) if $n = ap^k - 1$ for some $k \in \N$ and $1 \leq a < p$. Readers interested in the theory of Jones--Wenzl projectors in positive characteristic are referred to \cite{burrull2019pjoneswenzlidempotents, Tubbenhauer2021-kv} and references therein.
\end{remark}

Finally, we need this well-known observation about Jones--Wenzl projectors that yields the last remaining isomorphism for the proof of \autoref{prop:definitionsEquivalent}.
\begin{proposition}
	 In the setting of \autoref{def:coloredHomology} when working over a field of characteristic zero, we have (\invlabel) $\cong$ (\imlabel).
\end{proposition}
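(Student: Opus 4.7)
The strategy is to realize both (\invlabel) and (\imlabel) as the same subspace of $\Kh(L^\mathbf{n})$. Since the $S_{n_i}$-actions on different components of $L$ commute and the projector $p_\mathbf{n} = \bigotimes_i p_{n_i}$ splits along components, it suffices to prove $\im \Kh_K(p_n) = \Kh(K^n)^{S_n}$ for a single knot $K$; the multi-component case then follows by applying the statement to each tensor factor in parallel.

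For the knot case I would use the local identity $\sigma_j = \mathrm{id} - E_j$ in the $\TL$-action on $\Kh(K^n)$ (already exploited in the proof of (\kerlabel) $\cong$ (\invlabel)), where $E_j$ denotes the cup-cap generator at position $j$. The inclusion $\im \Kh_K(p_n) \subseteq \Kh(K^n)^{S_n}$ is immediate: the defining annihilation property of the Jones--Wenzl projector gives $E_j \cdot p_n = 0$ in $\TL_n$, so $\sigma_j \cdot p_n = (\mathrm{id} - E_j)\cdot p_n = p_n$, whence every element of $\im \Kh_K(p_n)$ is fixed by every $\sigma_j$. For the reverse inclusion, let $v \in \Kh(K^n)^{S_n}$; then $\sigma_j v = v$ combined with $\sigma_j = \mathrm{id} - E_j$ forces $E_j v = 0$ for all $j$. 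Expanding $p_n = \mathrm{id}_n + \sum_D c_D D$ in the crossingless-matching basis, any non-identity diagram $D$ must contain at least one bottom-bottom arc (otherwise $D$ would be a non-crossing permutation, hence $D = \mathrm{id}_n$), so $D$ factors as $D = D' \circ E_j$ for some $j$. Therefore $D$ annihilates $v$, and $\Kh_K(p_n) v = v$ exhibits $v \in \im \Kh_K(p_n)$.

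Combining the two inclusions gives the equality $\im \Kh_K(p_n) = \Kh(K^n)^{S_n}$, and reassembling tensor factors yields the isomorphism (\invlabel) $\cong$ (\imlabel) for links. The only nontrivial technical input is the elementary combinatorial observation that any non-identity crossingless matching on $n+n$ points factors through a cup-cap generator $E_j$; apart from this, the argument is a direct manipulation of the characterizing properties of $p_n$.
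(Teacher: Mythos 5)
Your proof is correct and is essentially the paper's argument spelled out in more detail: both use the Temperley--Lieb relation expressing the crossing as $\id$ minus the cup--cap generator, together with the defining properties of the Jones--Wenzl projector (annihilating cups/caps, hence absorbing crossings, with every non-identity diagram factoring through some $E_j$), to show that $p_n$ is the symmetrizing idempotent and that (\invlabel) and (\imlabel) are literally the same subspace of $\Kh(L^{\mathbf{n}})$, extended component-wise to links. No changes needed.
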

\begin{proof}
	We argue link component-wise: As in \autoref{prop:kerandinv}, we use that the crossing in $\TL$ is given by 
	$\begin{tikzpicture}[anchorbase,scale=0.15]
	\draw[usual] (-1,-1) to [out=90,in=270] (1,1);
	\draw[usual]  (1,-1)  to  [out=90,in=270] (-1,1);
\end{tikzpicture}
=
\begin{tikzpicture}[anchorbase, scale=0.15,tinynodes]
	\draw[usual,out = 90, in = 270] (-1,-1) to (-1,1);
	\draw[usual, out = 90, in = 270]  (1,-1)  to (1,1);
\end{tikzpicture}
\ - \
\begin{tikzpicture}[anchorbase, scale=0.15,tinynodes]
	\draw[usual] (-1,-1) to[out = 90, in = 180] (0,-0.25) to [out = 0, in = 90]  (1,-1);
	\draw[usual]  (-1,1)  to[out = 270, in = 180] (0,0.25) to [out = 0, in = 270] (1,1);
\end{tikzpicture}
$ 
and obtain (in the notation of \autoref{prop:kerandinv}) from the second property of the Jones--Wenzl projector as introduced in \autoref{def:JW} that
\begin{gather*}
		\begin{tikzpicture}[scale=0.15, anchorbase]
		\draw[usual] (-1.5,-2.5) to (-1.5,-1) node[left,xshift=2pt]{$i$} to (-1.5,0.5);
		\draw[usual] (-1,0.5) to[out = 270, in = 180] (0,-0.5) to[out = 0, in = 270] (1,0.5);
		\draw[usual] (-1,-2.5) to[out = 90, in = 180] (0,-1.5) to[out = 0, in =90] (1,-2.5);
		\draw[usual] (1.5,-2.5) to (1.5,-1) node[right,xshift=-2pt]{} to (1.5,0.5);
		\draw[JW,fill=white] (-2.5,-2.5) rectangle (2.5,-4.5) node[pos=0.5] {$n$};
	\end{tikzpicture}
	= 0
	\iff
\begin{tikzpicture}[scale=0.15, anchorbase]
	\draw[usual] (-1.5,-2.5) to (-1.5,-1) node[left,xshift=2pt]{$i$} to (-1.5,0.5);
	\draw[usual] (-1,0.5) to[out = 270, in = 90]  (1,-2.5);
	\draw[usual] (-1,-2.5) to[out = 90, in = 270]  (1,0.5);
	\draw[usual] (1.5,-2.5) to (1.5,-1) node[right,xshift=-2pt]{} to (1.5,0.5);
	\draw[JW,fill=white] (-2.5,-2.5) rectangle (2.5,-4.5) node[pos=0.5] {$n$};
\end{tikzpicture}
=
\begin{tikzpicture}[scale=0.15, anchorbase]
	\draw[JW,fill=white] (-2.5,-2.5) rectangle (2.5,-4.5) node[pos=0.5] {$n$};
\end{tikzpicture}
\end{gather*}
Therefore, $p_n$ is the symmetrizing idempotent for the $S_n$-action. Extending this argument component-wise covers the case of $p_\mathbf{n}$ and the $S_\mathbf{n}$-action. This argument even shows that (\invlabel) = (\imlabel) since they are the same subspace of $\Kh\left(L^{\mathbf{n}}\right)$.
\end{proof}
\begin{proof}[Proof of \autoref{prop:definitionsEquivalent}]
	
	Combining the propositions in this section, we have found all the isomorphisms we need. To compute the Poincaré polynomial, we first note that $\Kh_L(C_\mathbf{n})$ is a bounded trigraded complex of bigraded vector spaces with gradings $q$ and $t$ with homological grading denoted $h$. The differential raises the $h$-grading, but leaves the other two invariant. We can calculate its Euler characteristic $\chi$ as an alternating sum over $h$ and obtain a Laurent polynomial in the variables $q, t$. This yields
	\begin{align*}
		\chi\left( \Kh_L(C_\mathbf{n})\right) (t,q)  &= \sum_{h = 0}^\abs{\lfloor \frac{\mathbf{n}}{2} \rfloor} \sum_{i,j} (-1)^{h} \dim_\F\left( \left(\Kh_L(C_\mathbf{n})^{h}\right)^{i,j} \right) t^i q^j \\
		 &= \sum_{\mathbf{k} = \mathbf{0}}^{\lfloor \frac{\mathbf{n}}{2} \rfloor} \sum_{i,j} (-1)^\abs{\mathbf{k}} \dim_\F\left(\left( \Kh\left(L^{\mathbf{n}-2\mathbf{k}} \right)^{\oplus \binom{\mathbf{n}-\mathbf{k}}{\mathbf{k}}} \right)^{i,j}\right) t^i q^j \\
		 &= \sum_{\mathbf{k} = \mathbf{0}}^{\lfloor \frac{\mathbf{n}}{2} \rfloor} (-1)^\abs{\mathbf{k}} \binom{\mathbf{n}-\mathbf{k}}{\mathbf{k}}  \Pol \left( \Kh \left( L^{\mathbf{n}-2\mathbf{k}} \right) \right) (t,q)
	\end{align*}
	where the superscript $i,j$ denotes the subspace in bigrading $i,j$. On the other hand, we know that $\Kh_L(C_\mathbf{n})$ is homotopy equivalent to a chain complex having $\Kh_L(\karjw[\mathbf{n}])$ in degree zero. The Euler characteristic of the latter complex is precisely the Poincaré polynomial of $\Kh_L(\karjw[\mathbf{n}])$, but since  Euler characteristics agree for homotopy equivalent complexes, we can deduce that
	\[ \Pol\left(\cKh{\mathbf{n}}(L)\right) = \Pol\left(\Kh_L(\karjw[\mathbf{n}])\right) = \chi\left( \Kh_L(C_\mathbf{n})\right) \]
	as claimed.
\end{proof}

\section{Observations}
\label{sec:observations}

Finally, we want to present two observations that can be extracted from the database \cite{coloredKh}: An apparent three-term recursion for $n$-colored Khovanov homology of the one-framed unknot yielding a conjectural closed formula for the Poincaré series of the skein lasagna module of $\overline{\mathbb{CP}^2}$ and the behavior of colored Khovanov homology under twist insertions which leads to the definition of a new family of knot-invariants.
\subsection{One-framed unknot}

The first observation in the data concerns the one-framed unknot. In order to state the observation, it is convenient to define the functions $o, \ell: \N \to \N$ by
\[
o(n)=  \begin{cases}
	2k(k+2) & \text{ if } n = 2k \text{ even } \\
	2k(k+3)+1 & \text{ if } n = 2k+1 \text{ odd }
\end{cases}
\quad \text{and} \quad
\ell(n) = o(n) - o(n-1) =  \begin{cases}
	n+3 & \text{ if } n \text{ even } \\
	n & \text{ if } n \text{ odd }
\end{cases} 
\]
\begin{observation}
	\label{obs:1funknot}
	Let $U_1$ be the 1-framed unknot and define $P_n = \Pol(\cKh{n}(U_1))$ as the Poincaré polynomial of $n$-colored Khovanov homology over $\mathbb{Q}$. For $n \leq 8$, the $n$-colored Khovanov homology is concentrated between bidegrees $(-2\lfloor\frac{n^2}{4}\rfloor, -o(n))$ and $(0, -n)$ if $n$ is even (resp. $(0,-n+2)$ if $n$ is odd) and there is a three-term recursion
	\[ P_n = t^{-2 \lfloor\frac{n}{2}\rfloor } q^{-\ell(n)}P_{n-1} + q^{-2}P_{n-2} + t^{-2 \lfloor\frac{n}{2}\rfloor +1} q^{-\ell(n)+2} P_{n-3} \]
	of Poincaré polynomials of colored Khovanov homology. In particular, the total dimension of $\cKh{n}(U_1)$ is computed by (shifted) Tribonacci numbers \cite[A001590]{oeis}.
	
	Further, the terms corresponding to $P_{n-1}$ and $P_{n-3}$ assemble into the Poincaré polynomial of the (shifted) Khovanov homology of the torus knot $T(n,n-1)$, i.e.\
	\[ t^{-2\lfloor\frac{n^2}{4}\rfloor} q^{-o(n)-(n-1)(n-2)+1} \Pol(\Kh(T(n,n-1))) =  t^{-2 \lfloor\frac{n}{2}\rfloor } q^{-\ell(n)}P_{n-1} + t^{-2 \lfloor\frac{n}{2}\rfloor +1} q^{-\ell(n)+2} P_{n-3}.  \]
\end{observation}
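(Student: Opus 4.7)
The plan is to combine finite verification with a sketch of a structural argument, since the statement is presented as an observation confirmed only for $n \leq 8$. The first step is to invoke \autoref{thm:introcableformula} to write $P_n = \Pol(\cKh{n}(U_1))$ as an alternating sum of Poincaré polynomials $\Pol(\Kh(U_1^m))$ of the ordinary Khovanov homologies of the cables of the 1-framed unknot, for $m \leq n$. The $m$-fold parallel push-off of $U_1$ with alternating orientations is, up to reorientation, the $(m,m)$-torus link obtained as the closure of a single positive full twist on $m$ strands, whose Khovanov homology is computable with the programs listed in \autoref{subsec:convention} and has been extensively studied in the literature. Substituting into the claimed three-term identity then reduces to a finite check at each $n$, reproducing the data recorded in \cite{coloredKh}.

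For a structural proof valid for all $n$, I would work at the chain-complex level in $\Mat(\Kar(\TL))$ and push through the cable functor $\Kh_{U_1}$ as in \autoref{subsec:definitions}. A three-term pattern of this shape strongly suggests a categorification of a Chebyshev-type identity such as $[n+1] = [2][n] - [n-1]$, twisted by the framing one: the prefactors $t^{-2\lfloor n/2\rfloor}q^{-\ell(n)}$ and $t^{-2\lfloor n/2\rfloor+1}q^{-\ell(n)+2}$ are natural candidates for the grading shifts produced by a single positive twist acting on the Jones--Wenzl summands $p_{n-1}$ and $p_{n-3}$ appearing in the iterated tensor product $p_{n-1}\otimes \id_1$. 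Combined with \autoref{lem:hte}, such an identity in $\Mat(\Kar(\TL))$ would translate directly into the claimed relation among $P_n$, $P_{n-1}$, $P_{n-2}$, $P_{n-3}$. The identification of $t^{-2\lfloor n/2\rfloor}q^{-\ell(n)}P_{n-1} + t^{-2\lfloor n/2\rfloor+1}q^{-\ell(n)+2}P_{n-3}$ with a shift of $\Pol(\Kh(T(n,n-1)))$ should then come from interpreting the relevant partial closure of the $n$-strand positive full twist through $p_n$ as a diagram for the torus knot $T(n,n-1)$.

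The hard part is that any Jones--Wenzl- or Chebyshev-type identity in $\Mat(\Kar(\TL))$ holds only up to homotopy after applying $\Kh_{U_1}$, so the induced relation among Poincaré polynomials is \emph{a priori} only an Euler-characteristic statement inside a spectral sequence. One would need to show that this spectral sequence degenerates and that no hidden extensions push cohomology outside the stated bigraded window between $(-2\lfloor n^2/4\rfloor, -o(n))$ and $(0,-n)$ (respectively $(0,-n+2)$). Because the observed coefficients in the recursion are integers without higher-order corrections, the identity appears to rely on delicate numerical alignments that the database only confirms up to $n=8$. A complete proof for all $n$ would likely require either a direct computation of the Khovanov homology of $(m,m)$-torus links with alternating orientations, or input from the structural picture of the skein lasagna module of $\overline{\mathbb{CP}^2}$ referenced in the introduction; pending this, extending the database is the most direct route to further evidence.
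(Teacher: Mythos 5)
Your proposal matches the paper's approach: the statement is an empirical observation for $n\leq 8$, justified exactly as in your first paragraph, by applying \autoref{thm:introcableformula} to the cables of $U_1$ (the $(m,m)$-torus links, up to orientation-induced grading shifts) and checking the recursion, the bidegree window, and the $T(n,n-1)$ identification against the computed data. Your structural sketch for general $n$ goes beyond what the paper does --- there the all-$n$ statement is deliberately left as \autoref{conj:recursion} --- and you correctly flag it as incomplete rather than claiming a proof.
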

\vspace{-1mm}
	This is illustrated 
	for $n=7$ in \autoref{fig:color7}, highlighting the contributions from $n-3, n-2$ and $n-1$ in red, green and blue respectively. The (shifted) Khovanov homology of $T(n,n-1)$ can be seen as the sum of red and blue parts inside $\cKh{n}(U_1)$.

\begin{figure}[H]
	\centering
	\begin{subfigure}[t]{0.3\textwidth} 
		\centering
		\includegraphics[width=\textwidth]{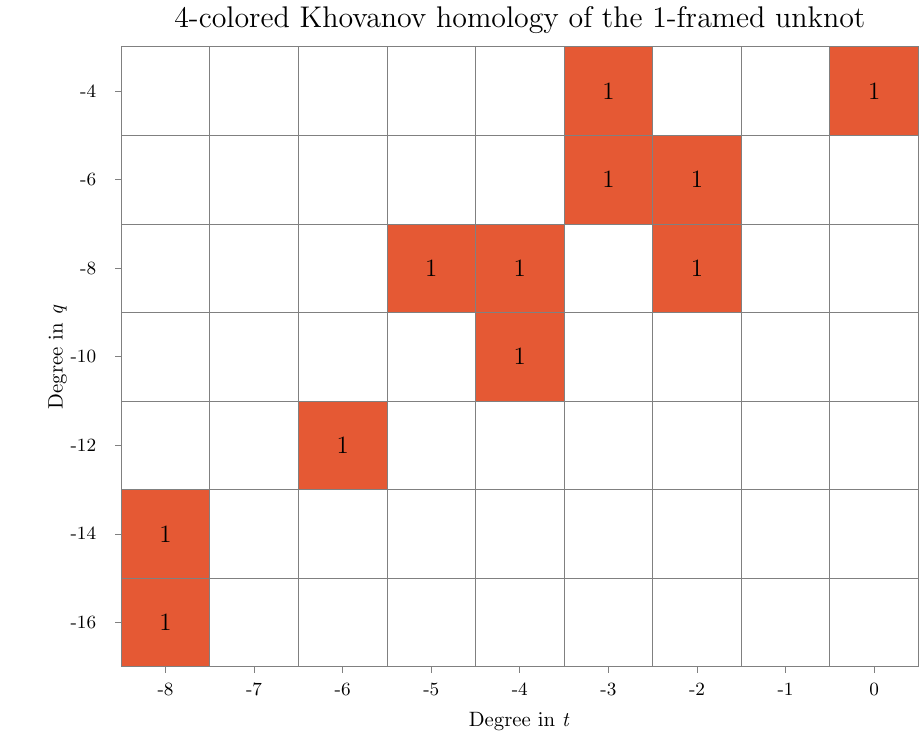}
	\end{subfigure}
	\begin{subfigure}[t]{0.3\textwidth} 
		\centering
		\includegraphics[width=\textwidth]{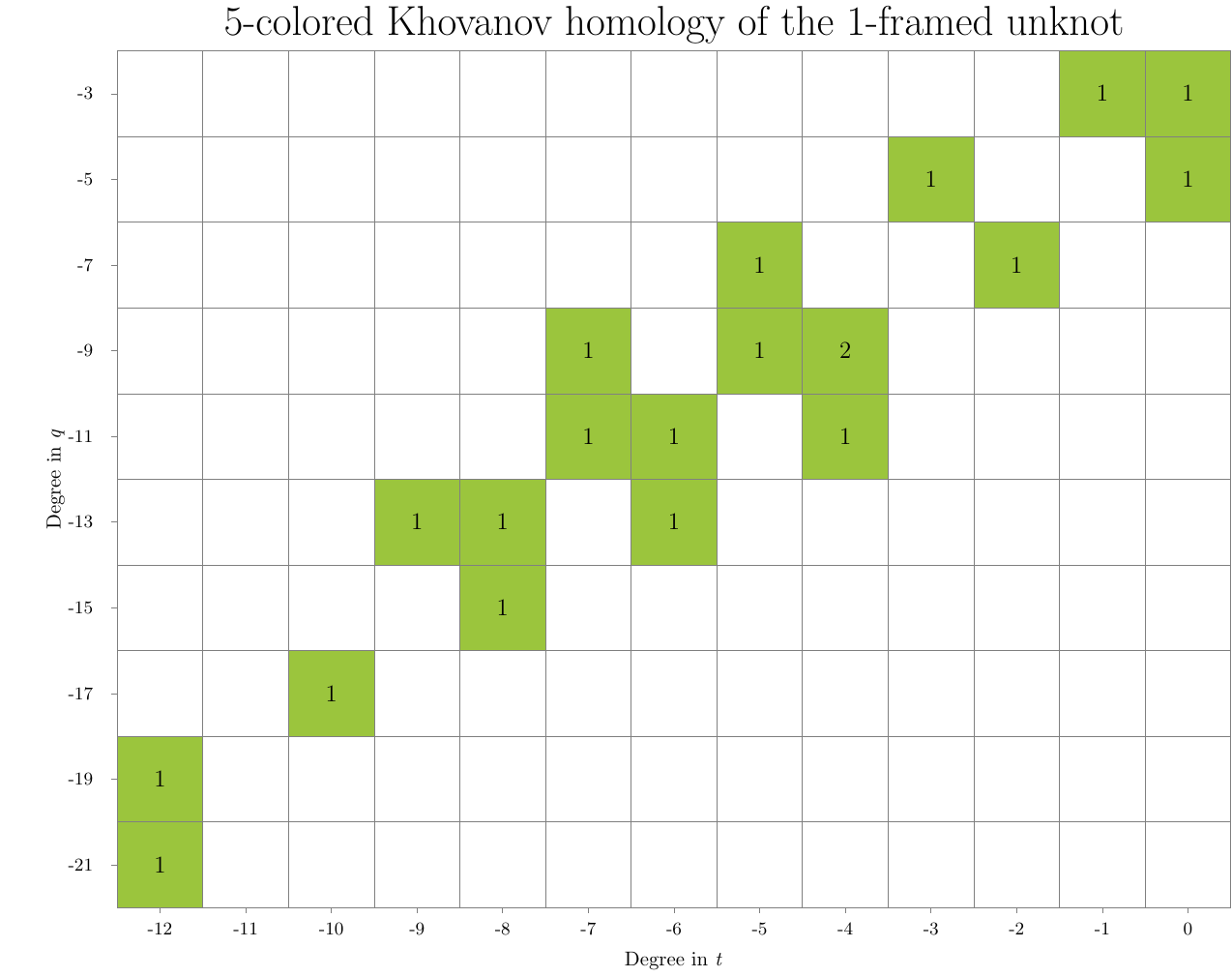}
	\end{subfigure}
	
	\begin{subfigure}[t]{0.465\textwidth} 
		\centering
		\includegraphics[width=\textwidth]{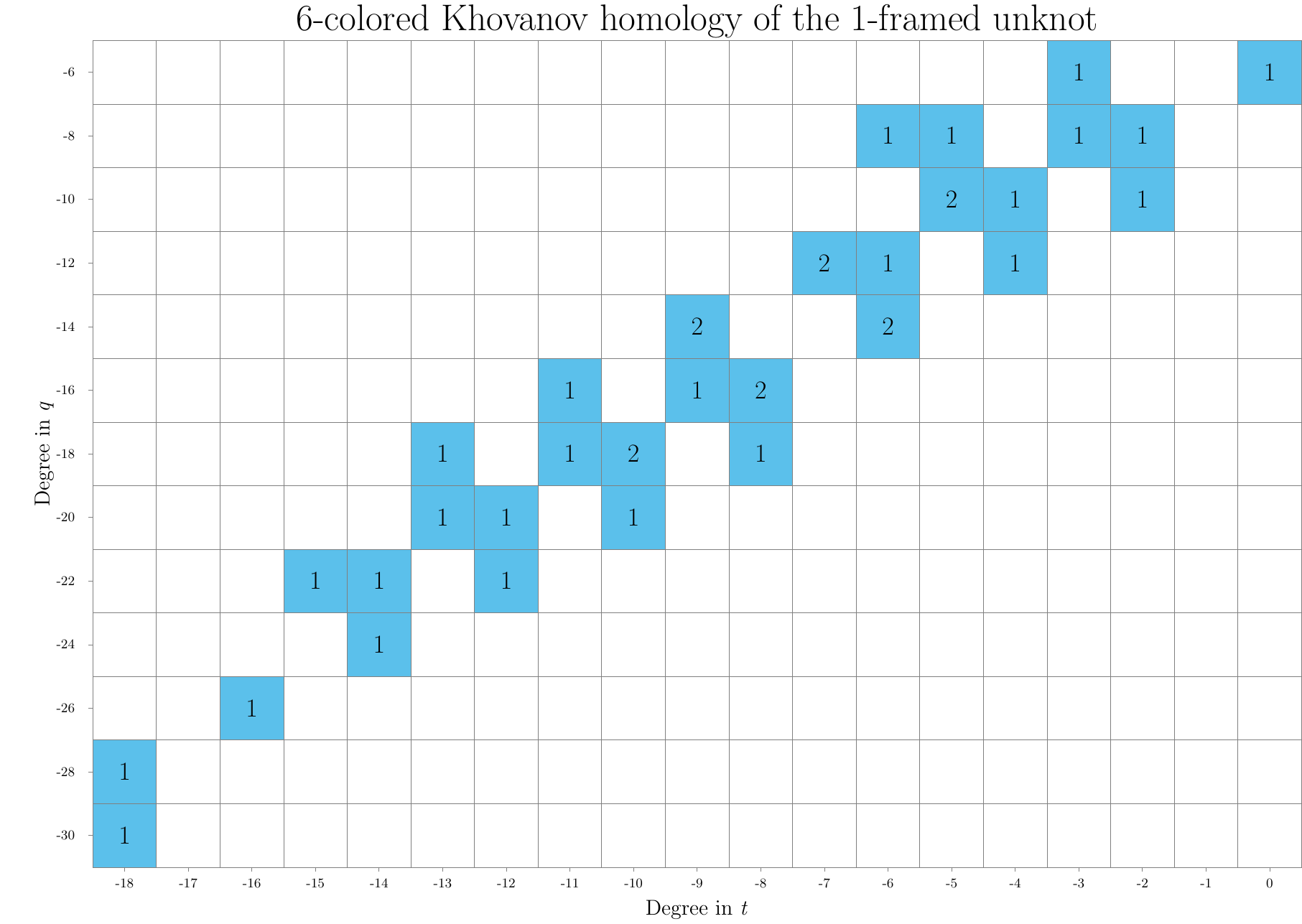}
	\end{subfigure}
	\begin{subfigure}[t]{0.465\textwidth} 
		\centering
		\includegraphics[width=\textwidth]{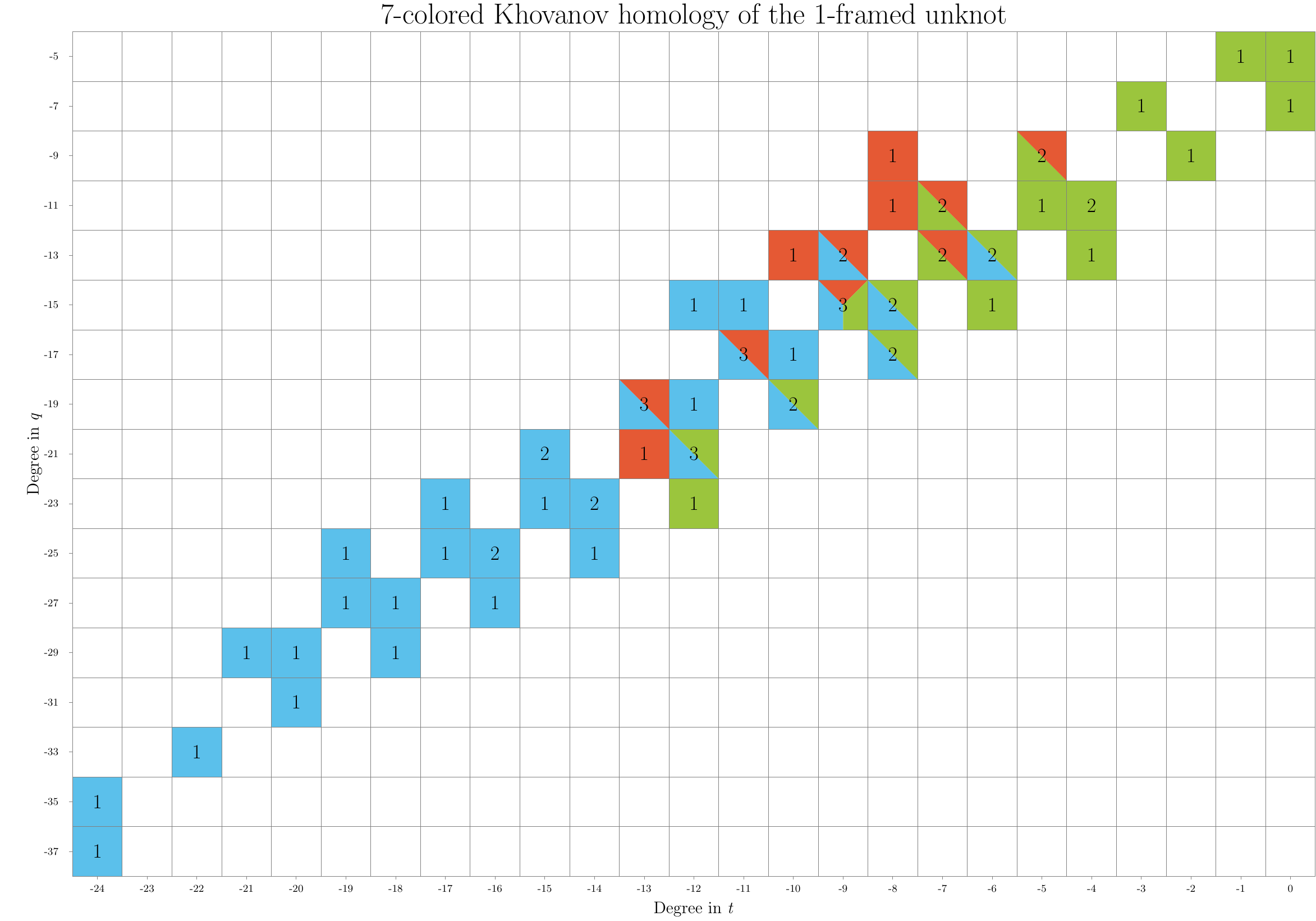}
	\end{subfigure}
	\vspace{-2mm}
	\caption{Poincaré polynomials of (rational) colored Khovanov homology of the 1-framed unknot for $n = 4,5,6,7$. The background color of non-trivial bidegree indicates the contribution to the decomposition of 7-colored homology according to the three-term recursion. The sum of blue and red part corresponds to $t^{-24}q^{-66}\Kh(T(7,6))$.}
	\label{fig:color7}
\end{figure}
\begin{conjecture}
	\label{conj:recursion}
	The formulas from \autoref{obs:1funknot} hold for all $n\in \N_0$.
\end{conjecture}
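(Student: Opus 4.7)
The plan is to combine the cable formula of \autoref{prop:definitionsEquivalent} with a skein-theoretic decomposition of the full-twist braid, and then lift the resulting identity to Khovanov homology by tracking bigraded shifts. The starting observation is that the $n$-cable (with the alternating orientations of \autoref{subsec:extension_to_links}) of the 1-framed unknot is the closure of the full-twist braid $\Delta_n^2 = (\sigma_1\sigma_2\cdots\sigma_{n-1})^n \in \mathrm{Br}_n$, i.e.\ the torus link $T(n,n)$. The cable formula therefore expresses $P_n$ as a signed binomial sum of the Poincar\'e polynomials $\Pol(\Kh(T(m,m)))$ for $m \leq n$ of the same parity, and the Catalan-triangle inversion of \autoref{cor:inverse} goes the other way. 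Consequently, the conjectured three-term recursion for $P_n$ is equivalent to a concrete linear relation among the $\Pol(\Kh(T(m,m)))$'s, which is amenable to skein-theoretic analysis.

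I would first verify the analogous recursion at the decategorified level by a direct calculation with $J_n(U_1) = (-1)^n q^{n(n+2)/2}[n+1]$; this pins down the grading shifts $-\ell(n)$, $-o(n)$, and $-2$ in the statement and confirms the shape of the recursion. To lift to Khovanov homology, I would then decompose $\Delta_n^2 = \Delta_{n-1}^2 \cdot (\sigma_{n-1} \cdots \sigma_1)(\sigma_1 \cdots \sigma_{n-1})$ and resolve the crossings of the two ``swirls'' one at a time via the oriented skein exact triangle. The resulting cascade of mapping cones should express $\Kh(T(n,n))$ in terms of $\Kh(T(n-1,n-1))$, which contributes the $P_{n-2}$ term via the cable formula, and $\Kh(T(n,n-1))$, which accounts for the combined $P_{n-1}$ and $P_{n-3}$ contributions and matches the torus-knot identification in \autoref{obs:1funknot}. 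This identification is geometrically natural, since $T(n,n-1)$ is the closure of $(\sigma_1\cdots\sigma_{n-1})^{n-1}$ and therefore differs from $\Delta_n^2$ by exactly one ``swirl'', i.e.\ by one iteration of the skein resolution procedure.

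The main obstacle will be showing that these mapping cones split as honest direct sums on cohomology, that is, that the connecting maps in the iterated skein triangles vanish on the relevant summands. At the decategorified level this is automatic, but categorically one needs either a structural argument (the two summands are supported in disjoint parts of the $(t,q)$-plane, so that no degree-preserving map between them can be nonzero) or an independent computation of the differentials. I expect to run an induction combining the recursion with the concentration statement from \autoref{obs:1funknot} that $\cKh{n}(U_1)$ is supported in the bigraded window $[-2\lfloor n^2/4 \rfloor, 0]\times[-o(n),-n]$ (with the appropriate parity modification for odd $n$); the shifts $t^{-2\lfloor n/2\rfloor}q^{-\ell(n)}$ and $t^{-2\lfloor n/2\rfloor+1}q^{-\ell(n)+2}$ should then move the $P_{n-1}$ and $P_{n-3}$ pieces into non-overlapping regions of the window, forcing the relevant connecting homomorphisms to vanish for bidegree reasons. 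Establishing this concentration bound is likely the technical heart of the argument, and may require input from the Manolescu--Neithalath 2-handle formula or a direct analysis of the bounded categorified Jones--Wenzl projector acting on the full-twist complex $\Kh_{U_0}(\Delta_n^2)$.
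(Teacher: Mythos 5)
The statement you are addressing is not proved in the paper at all: \autoref{conj:recursion} is an open conjecture, supported only by the database computations for $n\leq 8$ reported in \autoref{obs:1funknot}, and the paper itself only relates it to other open conjectures (Ren's Conjectures 6.1/6.1' on surjectivity of saddle maps between generalized torus links, and the Shumakovitch--Turner three-term conjecture for $\Kh(T(n,n-1))$). Your proposal is a plan rather than a proof, and the step you yourself flag as the main obstacle is exactly where it stops being one: the splitting of the iterated skein mapping cones, i.e.\ the vanishing of the connecting homomorphisms relating $\Kh(T(n,n))$, $\Kh(T(n,n-1))$ and $\Kh(T(n-1,n-1))$, is essentially equivalent to those cited open conjectures. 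So even if every other step were carried out, your argument would only reduce \autoref{conj:recursion} to other unproven statements, which is consistent with (and no stronger than) what the paper already observes.

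Two further concrete problems. First, your fallback argument that the summands ``are supported in disjoint parts of the $(t,q)$-plane'' is false: the three shifted pieces in the recursion overlap heavily in bidegree. For instance at $n=7$, the summand $q^{-2}P_5$ occupies homological degrees $-12,\dots,0$ while $t^{-6}q^{-7}P_6$ occupies $-24,\dots,-6$, and indeed in the paper's Figure 1 the red, green and blue contributions interleave inside one window; hence the connecting maps cannot be forced to vanish for degree reasons alone, and no independent computation of them is available for general $n$. Second, your induction uses the concentration bound of \autoref{obs:1funknot} as an input, but that bound is itself part of what is being conjectured for $n>8$, so it would have to be established simultaneously; relatedly, the alternatingly oriented $n$-cable of $U_1$ used in \autoref{prop:definitionsEquivalent} agrees with $T(n,n)$ only after reorienting components, which introduces linking-number--dependent grading shifts that your bookkeeping of the shifts $t^{-2\lfloor n/2\rfloor}q^{-\ell(n)}$ etc.\ would need to absorb. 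The decategorified check and the identification of the $P_{n-1}+P_{n-3}$ piece with $\Kh(T(n,n-1))$ are reasonable sanity checks and match the paper's discussion, but they do not close the gap.
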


\autoref{conj:recursion} is related to conjectures of Ren \cite[Conjectures 6.1/6.1']{ren2024leefiltrationstructuretorus} that assert that saddle cobordisms between certain families of generalizations of torus links induce surjective maps on Khovanov homology and by Shumakovitch and Turner \cite[Conjecture 4.4]{gorsky2013stable}/~\cite[Conjecture 6.2]{ren2024leefiltrationstructuretorus} that express the rational Khovanov homologies of $T(n,n-1)$-torus knots in terms of a three-term relation similar to \autoref{conj:recursion}.

\cite[Conjectures 6.1/6.1']{ren2024leefiltrationstructuretorus} imply that the dotted annulus maps expressing the (rational) skein lasagna module of $\overline{\mathbb{CP}^2}$ as a colimit turn out to be all injective (see \cite[Section 6.9]{ren2024khovanovhomologyexotic4manifolds}) and the quotient of $\cKh{n}(K)$ by the image of that map is isomorphic to the rational Khovanov homology of $T(n,n-1)$. Therefore, \cite[Conjecture 4.4]{gorsky2013stable} and \autoref{conj:recursion} are related via this decomposition if \cite[Conjectures 6.1/6.1']{ren2024leefiltrationstructuretorus} hold.
If \autoref{conj:recursion} holds, a calculation similar to \cite[Theorem 4.7]{gorsky2013stable} can be used to find the Poincaré series of the colimits along the inclusions $\cKh{n}(U_1) \hookrightarrow q^2\cKh{n+2}(U_1)$ which correspond (up to adjusting the grading) to the skein lasagna module of $\overline{\mathbb{CP}^2}$ (compare with \cite[Conjecture 6.14]{ren2024khovanovhomologyexotic4manifolds} for an expression in terms of Khovanov homologies of torus knots $T(n,n-1)$).

\begin{conjecture}
	The Poincaré series of the skein lasagna module of $\overline{\mathbb{CP}^2}$ in $H_2$ grading $0$ and $1$ are given by
	\begin{align*}
		\Pol\left(\mathcal{S}_0^2(\overline{\mathbb{CP}^2}; 0; \Q) \right) &= \lim_{n \to \infty} q^{-2n}P_{2n}(t,q^{-1}) \\
		= \frac{1}{1+q^{-4}t}  \sum_{k = 0}^\infty t^{-2k^2} q^{2k(k+1)}   \frac{(-q^{-4}t; q^{2}t^{-2})_{2k+1}}{(q^{2}t^{-2}; q^{2}t^{-2})_{2k}} 
		&+   t^{-2(k+1)^2} q^{2((k+1)^2 + k)}  \frac{(-q^{-4}t; q^{2}t^{-2})_{2k+2}}{(q^{2}t^{-2}; q^{2}t^{-2})_{2k+1}}  \\
		&\text{and} \\
		\Pol\left(\mathcal{S}_0^2(\overline{\mathbb{CP}^2}; 1; \Q) \right) &= \lim_{n \to \infty} q^{-2n-1}P_{2n+1}(t,q^{-1}) \\
		=  \frac{1}{1+q^{-4}t} \sum_{k = 0}^\infty t^{-2k(k+1)} q^{2(k^2+2k-1)}  \frac{(-q^{-4}t; q^{2}t^{-2})_{2k+1}}{(q^{2}t^{-2}; q^{2}t^{-2})_{2k}}  
		&+   t^{-2k(k+1)} q^{2k(k+2)} \frac{(-q^{-4}t; q^{2}t^{-2})_{2k+2}}{(q^{2}t^{-2}; q^{2}t^{-2})_{2k+1}}  \\
	\end{align*}
	\vspace{-11mm}
\end{conjecture}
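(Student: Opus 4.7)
Assume \autoref{conj:recursion}. The argument is a direct generating-function computation modeled on \cite[Thm.~4.7]{gorsky2013stable}. First, rescale by setting $\tilde P_n(t,q) := q^{-n} P_n(t, q^{-1})$; under this substitution the inclusion $\cKh{n}(U_1) \hookrightarrow q^{2} \cKh{n+2}(U_1)$ becomes a $q$-adic embedding, and the two parity-indexed limits $A_\infty := \lim_k \tilde P_{2k}$ and $B_\infty := \lim_k \tilde P_{2k+1}$ are, up to the stated grading translation, the two sides of the conjectured formulas. A short direct calculation converts the three-term recursion of \autoref{obs:1funknot} into
\[ \tilde P_n = t^{-2\lfloor n/2\rfloor}\,q^{\ell(n)-1}\,\tilde P_{n-1} + \tilde P_{n-2} + t^{-2\lfloor n/2\rfloor+1}\,q^{\ell(n)-5}\,\tilde P_{n-3}, \]
whose middle coefficient is \emph{exactly} $1$; this is the structural feature that makes the parity-limits well defined.

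Writing $A_k := \tilde P_{2k}$ and $B_k := \tilde P_{2k+1}$, this unpacks into the coupled system
\begin{align*}
 A_k - A_{k-1} &= t^{-2k} q^{2k+2}\,B_{k-1} \;+\; t^{-2k+1} q^{2k-2}\,B_{k-2},\\
 B_k - B_{k-1} &= t^{-2k} q^{2k}\,A_k \;+\; t^{-2k+1} q^{2k-4}\,A_{k-1},
\end{align*}
whose right-hand sides have strictly positive and unboundedly growing $q$-valuation, so the limits exist in the $q$-adic completion of $\Q[t^{\pm 1}]((q))$. Telescoping the first identity gives $A_\infty = A_0 + \sum_{k\geq 1}\bigl(t^{-2k} q^{2k+2}\,B_{k-1} + t^{-2k+1}\,q^{2k-2}\,B_{k-2}\bigr)$. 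Substituting the second identity into each $B_j$ and iterating produces a doubly-indexed weighted sum. The inner resummation, geometric in $q^{2}t^{-2}$, collapses into the denominator $q$-Pochhammers $(q^{2}t^{-2}; q^{2}t^{-2})_{2k}$ and $(q^{2}t^{-2}; q^{2}t^{-2})_{2k+1}$; the alternating coupling between the two summands of each identity supplies the sign-twist that yields the numerator $(-q^{-4}t; q^{2}t^{-2})_{2k+1}$ resp.\ $(-q^{-4}t; q^{2}t^{-2})_{2k+2}$; and the product of the $k$ leading coefficients $t^{-2j}q^{2j+2}$ telescopes to the Gaussian prefactor $t^{-2k^2} q^{2k(k+1)}$. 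The front factor $\frac{1}{1+q^{-4}t}$ arises as the limit of the final geometric tail left over after the resummation. The two summands per $k$ in the stated formula correspond to truncating this iteration after an even versus odd number of $B$-substitutions, and $B_\infty$ is handled identically with its own initial data.

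The main obstacle is combinatorial bookkeeping. Because the recursion couples the two parities and shifts both the $t$- and $q$-exponents by linear functions of $k$, organizing the iterated substitutions into the correct Pochhammer slots requires encoding the iteration by binary words (recording at each step which of the two summands $B_{k-1}$ or $B_{k-2}$ is chosen) and then recognizing the resulting weighted sum through a Rogers--Ramanujan-style $q$-series identity -- this is the technical heart of the argument. Once the closed forms for $A_\infty$ and $B_\infty$ are obtained, a direct check on the base cases $n\in\{0,1,2,3\}$, where the $P_n$ are computable from $\Kh$ of the $1$-framed unknot and its small cables, fixes the normalization and confirms the grading shift matching the skein lasagna identification recalled just before the conjecture.
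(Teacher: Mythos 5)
This statement is a \emph{conjecture}: the paper offers no proof, only the surrounding motivation that \emph{if} \autoref{conj:recursion} holds, then a computation in the style of \cite[Theorem 4.7]{gorsky2013stable} produces the Poincar\'e series of the colimit along the inclusions $\cKh{n}(U_1)\hookrightarrow q^2\cKh{n+2}(U_1)$, and that this colimit corresponds (up to grading) to the skein lasagna module via the 2-handle formula of \cite{manolescu2022skein}, with injectivity of the dotted annulus maps only known conditionally on \cite[Conjectures 6.1/6.1']{ren2024leefiltrationstructuretorus}. Your plan follows exactly this envisioned route, and the elementary part of it checks out: the rescaling $\tilde P_n(t,q)=q^{-n}P_n(t,q^{-1})$ does turn the three-term recursion of \autoref{obs:1funknot} into the stated relation with middle coefficient $1$, the coupled system for $A_k=\tilde P_{2k}$, $B_k=\tilde P_{2k+1}$ is correct, and the $q$-adic existence of the parity limits follows since the observation bounds the supports so that the increments have $q$-valuation growing linearly in $k$.

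However, as a proof of the statement your proposal has two genuine gaps. First, it could at best be conditional: it assumes \autoref{conj:recursion} (verified in the paper only for $n\leq 8$), and it treats the first equality in each display, namely $\Pol\bigl(\mathcal{S}_0^2(\overline{\mathbb{CP}^2};\ast;\Q)\bigr)=\lim_n q^{-2n-\ast}P_{2n+\ast}(t,q^{-1})$, as already established (``the skein lasagna identification recalled just before the conjecture''), whereas that identification of the colimit's Poincar\'e series with the limit of Poincar\'e polynomials requires the maps in the colimit to be injective, which the paper only obtains from Ren's conjectures; this equality is itself part of what is being conjectured. Second, and more substantively, the closed-form content of the statement --- the numerators $(-q^{-4}t;q^2t^{-2})_{2k+1}$ and $(-q^{-4}t;q^2t^{-2})_{2k+2}$, the Gaussian prefactors, and the overall factor $\frac{1}{1+q^{-4}t}$ --- is exactly the outcome of the iterated substitution and resummation that you describe only impressionistically and explicitly defer to ``a Rogers--Ramanujan-style $q$-series identity'' which you neither state nor verify. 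Since that resummation is the entire analytic content of the formula (the telescoping and the coupled system are routine), the proposal does not establish the conjectured expressions even conditionally; checking base cases $n\in\{0,1,2,3\}$ cannot substitute for it, because agreement in low order does not pin down an infinite $q$-series of this shape.
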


Additionally, since the relationship between $\cKh{n}(K)$ and $\Kh(K^i)$ (for $0 \leq i \leq n$) is encoded by the Chebyshev polynomials of the second kind, one can invert this relation to express $\Kh(K^n)$ in terms of the $\cKh{i}(K)$ (for $0 \leq i \leq n$). Therefore, if \autoref{conj:recursion} has an affirmative answer, this can be used to compute $\Kh(T(n,n))$ (see also \cite[Conjecture 6.1']{ren2024leefiltrationstructuretorus}).

\subsection{Minimal total dimensions under varying the framing}
\label{sec:examples}

Under a framing change, the colored Jones polynomials of a link change by a monomial factor depending on the color of the relevant component. In contrast, colored Khovanov homology has a more interesting behavior, as already illustrated in the previous section. The data in \cite{coloredKh} for $2$-colored and (small examples of) $3$-colored Khovanov homology suggests that the total dimensions of these homologies show some systematic behavior in the limit case. This motivates the following definition of a whole family of knot invariants for any \emph{unframed}, oriented knot $K$ and questions regarding their properties:

\begin{definition}
	For each knot $K$, let $X(K)\subset \Z \times \N$ be the subset of pairs $(k,n)$, such
	that the dimension of $n$-colored Khovanov homology of the knot is minimal for
	the framing $k$. We can partition the set $X(K)$ by colors and write $X_n(K):=\{k\in \Z\mid (k,n)\in X(K)\}$. Both $X(K)$ and $X_n(K)$ are invariants of the knot $K$.
\end{definition}

\begin{question}
\label{question:limit}
Fix a knot $K$ in $B^3$ and a number $n\in \N_{\geq 2}$. Does the total dimension of
$n$-colored Khovanov homology of the $k$-framed knot $K$ tend to $\infty$ for
$k\to \pm \infty$? 
\end{question}

Using mirror arguments, this is related to the $n$-projector colored homology of $K$ and $K^!$ being infinite  \cite{cooper2012categorification, Rozansky2014-ch}. An affirmative answerer to \autoref{question:limit} would imply that $X_n(K)$ is finite. In this case, we can ask more questions about its structure:
\begin{question}
	\label{question:structure}
	\begin{enumerate}[(a)]
		\item \label{question:interval} Is $X_n(K)$ an interval $[a_n, b_n]$? 
		\item  \label{question:monotone_limit} Is the dimension of $n$-colored Khovanov homology of the knot $K$ with framing $k$ monotonically increasing (resp.\ decreasing) in $k$ for $k \geq k_{+} > b_n$ (resp. $k \leq k_- < a_n$) for some $k_+$ (resp.\ $k_-$)?
		\item \label{question:monotone}  Do the above monotony statements already hold for $k \geq b_n$ (resp.\ for $k \leq a_n$)?
		\item Do the sequences of endpoints $a_n$ and $b_n$, centers $c_n = \frac{a_n+b_n}{2}$ or lengths $\ell_n = b_n - a_n$ have limits as $n\to \infty$?
	\end{enumerate}
\end{question}

\begin{remark}
	If the knot $K$ is amphichiral, it is (possibly up to orientation) equivalent to its mirror. Therefore, $X_n(K)$ is symmetric around $0$ for all $n$. 
\end{remark}

 In the $n=2$ case finiteness of $X_2(K)$ and item (\ref*{question:monotone_limit}) of \autoref{question:structure}  with linear growth of slope $2$ can be easily deduced from the structure of the categorified $2$-projector \cite{cooper2012categorification}. The $2$-projector colored homology of a knot contains (at large enough bidegrees) a subcomplex homotopy equivalent to an infinite sum of shifted copies of
 \[ \Q[X]/(X^2) \xrightarrow{\cdot X} \Q[X]/(X^2) \]
 with two-dimensional homology.
 Additionally, increasing the framing of $K$ by one corresponds  \footnote{up to a change of framing of $K^2$} to inserting an additional full twist to $K^2$. This procedure approximates the $2$-projector colored homology in the large-twist limit \cite{Rozansky2014-ch} by (in the limit) adding one summand of the above form per inserted twist.
 The available data in \cite{coloredKh} suggests an affirmative answer for item (\ref*{question:interval}) with  all computed intervals having length $2$ or $3$ and an extension of the linear growth of slope $2$ to item (\ref*{question:monotone}) as well.

In view of the available data, we ask:

\begin{question}
	Is $X_2(K)$ or more generally $X_n(K)$ a concordance invariant? 
\end{question}
\autoref{tab:data} contains a selection of concordance invariants (the signature, the $4$-ball genus, the concordance order, the $s$ invariant~\cite{Rasmussen2010}, $\tau$~\cite{Ozsv_th_2003}, $\nu$~\cite{Ozsv_th_2010},  $\epsilon$~\cite{Hom_2013} and $\vartheta_0$ \cite{Lewark_2024}). Whenever two knots from the table agree on all of these concordance invariants, they also agree on $X_2$. Additionally, there seems to be a connection between $\vartheta_0$ and $X_2$ since $\vartheta_0(K) \in X_2(K)$ in all instances, but in the cases where $|X_2(K)| = 2$, it is not consistent if $\vartheta_0$ is the smaller or larger one, but when $|X_2(K)| = 3$, we always observe $X_2(K) =\{-1,0,1\}$ and $\vartheta_0(K) = 0$. This in particular includes all knots in the table that are amphichiral or slice.

\begin{table}[H]
	\begin{tabular}{cccccccccccccccc}
		\begin{tabular}{ccccccccccccccccc}
			\toprule
			Name & Braid Notation \tablefootnote{This table shows the knotinfo convention for chiral knots, but \cite{coloredKh} uses the Knot Atlas convention by default. This results in most but not all of these knots to be mirrored.} & $X_2$ & $\vartheta_0$ & $\sigma$ & $g_4$ & Con. Order & $s$ & $\tau$ & $\nu$ & $\epsilon$ \\
			\midrule
			$3_{1}$ & $\{1,1,1\}$ & $\{2, 3\}$ & 3 & -2 & 1 & $\infty$ & 2 & 1 & $\{1,-1\}$ & 1 \\
			$4_{1}$ & $\{1,-2,1,-2\}$ & $\{-1, 0, 1\}$ & 0 & 0 & 1 & 2 & 0 & 0 & $\{0,0\}$ & 0 \\
			$5_{1}$ & $\{1,1,1,1,1\}$ & $\{5, 6\}$ & 6 & -4 & 2 & $\infty$ & 4 & 2 & $\{2,-2\}$ & 1 \\
			$5_{2}$ & $\{1,1,1,2,-1,2\}$ & $\{2, 3\}$ & 3 & -2 & 1 & $\infty$ & 2 & 1 & $\{1,-1\}$ & 1 \\
			$6_{1}$ & $\{1,1,2,-1,-3,2,-3\}$ & $\{-1, 0, 1\}$ & 0 & 0 & 0 & 1 & 0 & 0 & $\{0,0\}$ & 0 \\
			\midrule
			$6_{2}$ & $\{1,1,1,-2,1,-2\}$ & $\{2, 3\}$ & 3 & -2 & 1 & $\infty$ & 2 & 1 & $\{1,-1\}$ & 1 \\
			$6_{3}$ & $\{1,1,-2,1,-2,-2\}$ & $\{-1, 0, 1\}$ & 0 & 0 & 1 & 2 & 0 & 0 & $\{0,0\}$ & 0 \\
			$7_{1}$ & $\{1,1,1,1,1,1,1\}$ & $\{8, 9\}$ & 9 & -6 & 3 & $\infty$ & 6 & 3 & $\{3,-3\}$ & 1 \\
			$7_{2}$ & $\{1,1,1,2,-1,2,3,-2,3\}$ & $\{2, 3\}$ & 3 & -2 & 1 & $\infty$ & 2 & 1 & $\{1,-1\}$ & 1 \\
			$7_{3}$ & $\{1,1,1,1,1,2,-1,2\}$ & $\{5, 6\}$ & 6 & -4 & 2 & $\infty$ & 4 & 2 & $\{2,-2\}$ & 1 \\
			\midrule
			$7_{4}$ & $\{1,1,2,-1,2,2,3,-2,3\}$ & $\{2, 3\}$ & 3 & -2 & 1 & $\infty$ & 2 & 1 & $\{1,-1\}$ & 1 \\
			$7_{5}$ & $\{1,1,1,1,2,-1,2,2\}$ & $\{5, 6\}$ & 6 & -4 & 2 & $\infty$ & 4 & 2 & $\{2,-2\}$ & 1 \\
			$7_{6}$ & $\{1,1,-2,1,3,-2,3\}$ & $\{2, 3\}$ & 3 & -2 & 1 & $\infty$ & 2 & 1 & $\{1,-1\}$ & 1 \\
			$7_{7}$ & $\{-1,2,-1,2,-3,2,-3\}$ & $\{-1, 0, 1\}$ & 0 & 0 & 1 & $\infty$ & 0 & 0 & $\{0,0\}$ & 0 \\
			$8_{1}$ & $\{1,1,2,-1,2,3,-2,-4,3,-4\}$ & $\{-1, 0, 1\}$ & 0 & 0 & 1 & $\infty$ & 0 & 0 & $\{0,0\}$ & 0 \\
			\midrule
			$8_{2}$ & $\{1,1,1,1,1,-2,1,-2\}$ & $\{5, 6\}$ & 6 & -4 & 2 & $\infty$ & 4 & 2 & $\{2,-2\}$ & 1 \\
			$8_{3}$ & $\{1,1,2,-1,-3,2,-3,-4,3,-4\}$ & $\{-1, 0, 1\}$ & 0 & 0 & 1 & 2 & 0 & 0 & $\{0,0\}$ & 0 \\
			$8_{4}$ & $\{-1,-1,-1,2,-1,2,3,-2,3\}$ & $\{-3, -2\}$ & -3 & 2 & 1 & $\infty$ & -2 & -1 & $\{-1,1\}$ & -1 \\
			$8_{5}$ & $\{1,1,1,-2,1,1,1,-2\}$ & $\{5, 6\}$ & 6 & -4 & 2 & $\infty$ & 4 & 2 & $\{2,-2\}$ & 1 \\
			$8_{6}$ & $\{1,1,1,1,2,-1,-3,2,-3\}$ & $\{2, 3\}$ & 3 & -2 & 1 & $\infty$ & 2 & 1 & $\{1,-1\}$ & 1 \\
			\midrule
			$8_{7}$ & $\{-1,-1,-1,-1,2,-1,2,2\}$ & $\{-3, -2\}$ & -3 & 2 & 1 & $\infty$ & -2 & -1 & $\{-1,1\}$ & -1 \\
			$8_{8}$ & $\{-1,-1,-1,-2,1,3,-2,3,3\}$ & $\{-1, 0, 1\}$ & 0 & 0 & 0 & 1 & 0 & 0 & $\{0,0\}$ & 0 \\
			$8_{9}$ & $\{1,1,1,-2,1,-2,-2,-2\}$ & $\{-1, 0, 1\}$ & 0 & 0 & 0 & 1 & 0 & 0 & $\{0,0\}$ & 0 \\
			$8_{10}$ & $\{-1,-1,-1,2,-1,-1,2,2\}$ & $\{-3, -2\}$ & -3 & 2 & 1 & $\infty$ & -2 & -1 & $\{-1,1\}$ & -1 \\
			$8_{11}$ & $\{1,1,2,-1,2,2,-3,2,-3\}$ & $\{2, 3\}$ & 3 & -2 & 1 & $\infty$ & 2 & 1 & $\{1,-1\}$ & 1 \\
			\midrule
			$8_{12}$ & $\{1,-2,1,3,-2,-4,3,-4\}$ & $\{-1, 0, 1\}$ & 0 & 0 & 1 & 2 & 0 & 0 & $\{0,0\}$ & 0 \\
			$8_{13}$ & $\{1,1,-2,1,-2,-2,-3,2,-3\}$ & $\{-1, 0, 1\}$ & 0 & 0 & 1 & $\infty$ & 0 & 0 & $\{0,0\}$ & 0 \\
			$8_{14}$ & $\{1,1,1,2,-1,2,-3,2,-3\}$ & $\{2, 3\}$ & 3 & -2 & 1 & $\infty$ & 2 & 1 & $\{1,-1\}$ & 1 \\
			$8_{15}$ & $\{1,1,-2,1,3,2,2,2,3\}$ & $\{5, 6\}$ & 6 & -4 & 2 & $\infty$ & 4 & 2 & $\{2,-2\}$ & 1 \\
			$8_{16}$ & $\{-1,-1,2,-1,-1,2,-1,2\}$ & $\{-3, -2\}$ & -3 & 2 & 1 & $\infty$ & -2 & -1 & $\{-1,1\}$ & -1 \\
			\midrule
			$8_{17}$ & $\{1,1,-2,1,-2,1,-2,-2\}$ & $\{-1, 0, 1\}$ & 0 & 0 & 1 & 2 & 0 & 0 & $\{0,0\}$ & 0 \\
			$8_{18}$ & $\{1,-2,1,-2,1,-2,1,-2\}$ & {\color{red}$\{-1, 0, 1\}$} & 0 & 0 & 1 & 2 & 0 & 0 & $\{0,0\}$ & 0 \\
			$8_{19}$ & $\{1,1,1,2,1,1,1,2\}$ & $\{7, 8\}$ & 8 & -6 & 3 & $\infty$ & 6 & 3 & $\{3,-2\}$ & 1 \\
			$8_{20}$ & $\{1,1,1,-2,-1,-1,-1,-2\}$ & $\{-1, 0, 1\}$ & 0 & 0 & 0 & 1 & 0 & 0 & $\{0,0\}$ & 0 \\
			$8_{21}$ & $\{1,1,1,2,-1,-1,2,2\}$ & $\{2, 3\}$ & 3 & -2 & 1 & $\infty$ & 2 & 1 & $\{1,0\}$ & 1 \\
			\bottomrule
		\end{tabular}
	\end{tabular}
	\caption{Table of knot concordance invariants and $X_2$ for all prime knots with up to 8 crossings. Red values are extrapolated from incomplete computational data.}
	\label{tab:data}
\end{table}

\bibliographystyle{alpha}
\bibliography{refs}

\begin{thebibliography}{RvMW24}

\bibitem[BHPW25]{beliakova2025unificationcoloredannularsl2}
Anna Beliakova, Matthew Hogancamp, Krzysztof Putyra, and Stephan Wehrli.
\newblock On unification of colored annular sl2 knot homology.
\newblock {\em Advances in Mathematics}, 468:110206, 2025.

\bibitem[Bla10]{Blanchet2010}
Christian Blanchet.
\newblock An oriented model for {K}hovanov homology.
\newblock {\em J. Knot Theory Ramifications}, 19(2):291--312, 2010.

\bibitem[BLS19]{burrull2019pjoneswenzlidempotents}
Gaston Burrull, Nicolas Libedinsky, and Paolo Sentinelli.
\newblock p-{J}ones-{W}enzl idempotents.
\newblock {\em Advances in Mathematics}, 352:246--264, 2019.

\bibitem[BN02]{Bar_Natan_2002}
Dror Bar-Natan.
\newblock On {K}hovanov’s categorification of the {J}ones polynomial.
\newblock {\em Algebraic \& Geometric Topology}, 2(1):337–370, May 2002.

\bibitem[BN05]{Bar_Natan_2005}
Dror Bar-Natan.
\newblock Khovanov’s homology for tangles and cobordisms.
\newblock {\em Geometry \& Topology}, 9(3):1443–1499, August 2005.

\bibitem[BN07]{barnatan2007fastkhovanovhomologycomputations}
Dror Bar-Natan.
\newblock Fast {K}hovanov {H}omology {C}omputations.
\newblock {\em Journal of Knot Theory and Its Ramifications}, 16(03):243--255,
  2007.

\bibitem[BW08]{Beliakova_Wehrli_2008}
Anna Beliakova and Stephan Wehrli.
\newblock Categorification of the {C}olored {J}ones {P}olynomial and
  {R}asmussen {I}nvariant of {L}inks.
\newblock {\em Canadian Journal of Mathematics}, 60(6):1240–1266, 2008.

\bibitem[Cap08]{Caprau2008}
Carmen~Livia Caprau.
\newblock {$\rm sl(2)$} tangle homology with a parameter and singular
  cobordisms.
\newblock {\em Algebr. Geom. Topol.}, 8(2):729--756, 2008.

\bibitem[CE24]{Coulembier2024Nsphericalfunctorsandtensorcategories}
Kevin Coulembier and Pavel Etingof.
\newblock N-spherical {F}unctors and {T}ensor {C}ategories.
\newblock {\em International Mathematics Research Notices},
  2024(14):10615--10649, 05 2024.

\bibitem[CK12]{cooper2012categorification}
Benjamin Cooper and Vyacheslav Krushkal.
\newblock Categorification of the {J}ones--{W}enzl projectors.
\newblock {\em Quantum Topology}, 3(2):139--180, 2012.

\bibitem[CMW09]{Clark_2009}
David Clark, Scott Morrison, and Kevin Walker.
\newblock Fixing the functoriality of {K}hovanov homology.
\newblock {\em Geometry \& Topology}, 13(3):1499–1582, March 2009.

\bibitem[DKS23]{dyckerhoff2023nsphericalfunctorscategorificationeulers}
Tobias Dyckerhoff, Mikhail Kapranov, and Vadim Schechtman.
\newblock N-spherical functors and categorification of {E}uler's continuants,
  2023.
\newblock \arxiv{2306.13350}.

\bibitem[DW25]{dyckerhoff2025perverseschoberscoxetertype}
Tobias Dyckerhoff and Paul Wedrich.
\newblock Perverse schobers of {C}oxeter type $\mathbb{A}$, 2025.
\newblock \arxiv{2504.08496}.

\bibitem[ETW18]{Ehrig_2018}
Michael Ehrig, Daniel Tubbenhauer, and Paul Wedrich.
\newblock Functoriality of colored link homologies.
\newblock {\em Proceedings of the London Mathematical Society},
  117(5):996–1040, June 2018.

\bibitem[FSS12]{Frenkel2012-cb}
Igor~B Frenkel, Catharina Stroppel, and Joshua Sussan.
\newblock Categorifying fractional {E}uler characteristics, {Jones--Wenzl}
  projectors and $3j$-symbols.
\newblock {\em Quantum Topol.}, 3(2):181--253, March 2012.

\bibitem[Gar04]{garoufalidis2004characteristicdeformationvarietiesknot}
Stavros Garoufalidis.
\newblock On the characteristic and deformation varieties of a knot.
\newblock {\em Geom. Topol. Monogr}, 7:291--304, 2004.

\bibitem[Gar11]{Garoufalidis2011-vq}
Stavros Garoufalidis.
\newblock The {J}ones slopes of a knot.
\newblock {\em Quantum Topol.}, 2(1):43--69, January 2011.

\bibitem[GLW17]{Grigsby_2017}
J.~Elisenda Grigsby, Anthony~M. Licata, and Stephan~M. Wehrli.
\newblock Annular {K}hovanov homology and knotted {S}chur–{W}eyl
  representations.
\newblock {\em Compositio Mathematica}, 154(3):459–502, November 2017.

\bibitem[GOR13]{gorsky2013stable}
Eugene Gorsky, Alexei Oblomkov, and Jacob Rasmussen.
\newblock On {S}table {K}hovanov {H}omology of {T}orus {K}nots.
\newblock {\em Experimental Mathematics}, 22(3):265--281, 2013.

\bibitem[GPPV20]{gukov2020bpsspectra3manifoldinvariants}
Sergei Gukov, Du~Pei, Pavel Putrov, and Cumrun Vafa.
\newblock {BPS} spectra and 3-manifold invariants.
\newblock {\em Journal of Knot Theory and Its Ramifications}, 29(02):2040003,
  2020.

\bibitem[GW23]{Gorsky2023-ih}
Eugene Gorsky and Paul Wedrich.
\newblock Evaluations of annular {Khovanov--Rozansky} homology.
\newblock {\em Math. Z.}, 303(1), January 2023.

\bibitem[Hog18]{Hogancamp2018-xq}
Matthew Hogancamp.
\newblock A polynomial action on colored $\mathfrak {sl}_2$ link homology.
\newblock {\em Quantum Topol.}, 10(1):1--75, October 2018.

\bibitem[Hog20]{hogancamp2020homological}
Matthew Hogancamp.
\newblock Homological perturbation theory with curvature, 2020.
\newblock \arxiv{1912.03843}.

\bibitem[Hom13]{Hom_2013}
Jennifer Hom.
\newblock Bordered {H}eegaard {F}loer homology and the tau-invariant of cable
  knots.
\newblock {\em Journal of Topology}, 7(2):287–326, August 2013.

\bibitem[HRW25]{Hogancamp2025-ol}
Matthew Hogancamp, David E~V Rose, and Paul Wedrich.
\newblock {A} {K}irby color for {K}hovanov homology.
\newblock {\em J. Eur. Math. Soc.}, February 2025.

\bibitem[Hsu93]{205649}
W.-J. Hsu.
\newblock Fibonacci cubes-a new interconnection {T}opology.
\newblock {\em IEEE Transactions on Parallel and Distributed Systems},
  4(1):3--12, 1993.

\bibitem[Kas97]{kashaev1997hyperbolic}
Rinat~M Kashaev.
\newblock The hyperbolic volume of knots from the quantum dilogarithm.
\newblock {\em Letters in mathematical physics}, 39(3):269--275, 1997.

\bibitem[Kho00]{khovanov2000categorification}
Mikhail Khovanov.
\newblock A categorification of the {J}ones polynomial.
\newblock {\em Duke Math. J.}, 104(1):359--426, 2000.

\bibitem[Kho02]{Khovanov_2002}
Mikhail Khovanov.
\newblock A functor-valued invariant of tangles.
\newblock {\em Algebraic \& Geometric Topology}, 2(2):665–741, September
  2002.

\bibitem[Kho03]{khovanov2003patterns}
Mikhail Khovanov.
\newblock Patterns in knot cohomology, {I}.
\newblock {\em Experimental mathematics}, 12(3):365--374, 2003.

\bibitem[Kho05]{khovanov2005categorifications}
Mikhail Khovanov.
\newblock Categorifications of the colored {J}ones polynomial.
\newblock {\em Journal of Knot Theory and Its Ramifications}, 14(01):111--130,
  2005.

\bibitem[KL94]{Kauffman-Lins94}
Louis~H. Kauffman and Sóstenes~L. Lins.
\newblock {\em Temperley-{L}ieb {R}ecoupling {T}heory and {I}nvariants of
  3-Manifolds (AM-134)}.
\newblock Princeton University Press, 1994.

\bibitem[KM11]{kronheimer2011khovanov}
P.~B. Kronheimer and T.~S. Mrowka.
\newblock Khovanov homology is an unknot-detector.
\newblock {\em Publications Math\'ematiques de l'IH\'ES}, 113:97--208, 2011.

\bibitem[Knoa]{Knotatlas}
{The Knot Atlas}.
\newblock \url{https://katlas.org/}.
\newblock [Online; accessed 23-April-2025].

\bibitem[Knob]{KnotTheory}
{The Mathematica Package KnotTheory`}.
\newblock \url{https://katlas.org/wiki/The_Mathematica_Package_KnotTheory%60}.
\newblock [Online; accessed 23-April-2025].

\bibitem[KR08]{khovanov2008matrix}
Mikhail Khovanov and Lev Rozansky.
\newblock Matrix factorizations and link homology.
\newblock {\em Fundamenta mathematicae}, 199(1):1--91, 2008.

\bibitem[KRSS17]{Kucharski_2017}
Piotr Kucharski, Markus Reineke, Marko Stošić, and Piotr Sułkowski.
\newblock {BPS} states, knots, and quivers.
\newblock {\em Physical Review D}, 96(12), December 2017.

\bibitem[KRSS19]{Kucharski_2019}
Piotr Kucharski, Markus Reineke, Marko Stošić, and Piotr Sułkowski.
\newblock Knots-quivers correspondence.
\newblock {\em Advances in Theoretical and Mathematical Physics},
  23(7):1849–1902, 2019.

\bibitem[Lew]{Khoca}
Lukas Lewark.
\newblock {Khoca, a knot homology calculator}.
\newblock \url{https://people.math.ethz.ch/~llewark/khoca.php}.
\newblock [Online; accessed 23-April-2025].

\bibitem[LL16]{Lewark_2016}
Lukas Lewark and Andrew Lobb.
\newblock New quantum obstructions to sliceness.
\newblock {\em Proceedings of the London Mathematical Society},
  112(1):81–114, January 2016.

\bibitem[LZ24]{Lewark_2024}
Lukas Lewark and Claudius Zibrowius.
\newblock Rasmussen invariants of {W}hitehead doubles and other satellites.
\newblock {\em Journal für die reine und angewandte Mathematik (Crelles
  Journal)}, September 2024.

\bibitem[Mar01]{markl2001ideal}
Martin Markl.
\newblock Ideal {P}erturbation {L}emma.
\newblock {\em Communications in Algebra}, 29(11):5209--5232, 2001.

\bibitem[MM01]{murakami2001colored}
Hitoshi Murakami and Jun Murakami.
\newblock The colored {J}ones polynomials and the simplicial volume of a knot.
\newblock {\em Acta Mathematica}, 186(1):85--104, 2001.

\bibitem[MN22]{manolescu2022skein}
Ciprian Manolescu and Ikshu Neithalath.
\newblock Skein lasagna modules for 2-handlebodies.
\newblock {\em Journal für die reine und angewandte Mathematik (Crelles
  Journal)}, 2022(788):37--76, 2022.

\bibitem[MWW22]{Morrison_2022}
Scott Morrison, Kevin Walker, and Paul Wedrich.
\newblock Invariants of 4–manifolds from {Khovanov–Rozansky} link homology.
\newblock {\em Geometry \& Topology}, 26(8):3367–3420, December 2022.

\bibitem[MWW23]{MR4589588}
Ciprian Manolescu, Kevin Walker, and Paul Wedrich.
\newblock Skein lasagna modules and handle decompositions.
\newblock {\em Adv. Math.}, 425:Paper No. 109071, 40, 2023.

\bibitem[{OEI}25]{oeis}
{OEIS Foundation Inc.}
\newblock The {O}n-{L}ine {E}ncyclopedia of {I}nteger {S}equences, 2025.
\newblock Published electronically at \url{http://oeis.org}.

\bibitem[OS03]{Ozsv_th_2003}
Peter Ozsváth and Zoltán Szabó.
\newblock Knot {F}loer homology and the four-ball genus.
\newblock {\em Geometry \& Topology}, 7(2):615–639, October 2003.

\bibitem[OS10]{Ozsv_th_2010}
Peter S Ozsváth and Zoltán Szabó.
\newblock Knot {F}loer homology and rational surgeries.
\newblock {\em Algebraic \& Geometric Topology}, 11(1):1–68, January 2010.

\bibitem[Ras05]{rasmussen2005khovanovsinvariantclosedsurfaces}
Jacob Rasmussen.
\newblock Khovanov's invariant for closed surfaces, 2005.
\newblock \arxiv{math/0502527}.

\bibitem[Ras10]{Rasmussen2010}
Jacob Rasmussen.
\newblock Khovanov homology and the slice genus.
\newblock {\em Inventiones mathematicae}, 182(2):419–447, September 2010.

\bibitem[Ren24]{ren2024leefiltrationstructuretorus}
Qiuyu Ren.
\newblock Lee filtration structure of torus links.
\newblock {\em Geom. Topol.}, 28(8):3935--3960, 2024.

\bibitem[Roz14]{Rozansky2014-ch}
Lev Rozansky.
\newblock An infinite torus braid yields a categorified {Jones--Wenzl}
  projector.
\newblock {\em Fund. Math.}, 225(1):305--326, 2014.

\bibitem[RT91]{Reshetikhin1991}
N.~Reshetikhin and V.~G. Turaev.
\newblock Invariants of 3-manifolds via link polynomials and quantum groups.
\newblock {\em Inventiones mathematicae}, 103(1):547--597, Dec 1991.

\bibitem[RvMW24]{coloredKh}
Karim Ritter~von Merkl and Paul Wedrich.
\newblock Colored-kh, a database for colored {K}hovanov homology.
\newblock \url{https://colored-kh.math.uni-hamburg.de}, 2024.

\bibitem[RW24]{ren2024khovanovhomologyexotic4manifolds}
Qiuyu Ren and Michael Willis.
\newblock Khovanov homology and exotic $4$-manifolds, 2024.
\newblock \arxiv{2402.10452}.

\bibitem[Sch]{Knotjob}
Dirk Schütz.
\newblock {KnotJob}.
\newblock \url{https://www.maths.dur.ac.uk/users/dirk.schuetz/knotjob.html}.
\newblock [Online; accessed 23-April-2025].

\bibitem[SZ24]{sullivan2024kirbybeltscategorifiedprojectors}
Ian Sullivan and Melissa Zhang.
\newblock Kirby belts, categorified projectors, and the skein lasagna module of
  {$S^{2} \times S^{2}$}.
\newblock {\em Quantum Topol.}, December 2024.

\bibitem[TW21]{Tubbenhauer2021-kv}
Daniel Tubbenhauer and Paul Wedrich.
\newblock Quivers for {SL}(2) tilting modules.
\newblock {\em Represent. Theory}, 25(15):440--480, June 2021.

\end{thebibliography}

\end{document}